\newtheorem{thm}{Theorem}[section]
\newtheorem{lem}[thm]{Lemma}
\newtheorem{cor}[thm]{Corollary}
\newtheorem{prop}[thm]{Proposition}
\newtheorem{conj}[thm]{Conjecture}
\theoremstyle{definition}
\newtheorem{definition}[thm]{Definition}
\newtheorem{example}[thm]{Example}
\newtheorem{remark}[thm]{Remark}
\DeclareMathOperator{\lcm}{lcm}
\newcommand\open{\begin{tikzpicture}[scale=0.3]
 \draw (0,0)--(1,1); 
 \draw[thick,blue] (.4,1)--(.4,.4)--(1,.4);
\end{tikzpicture}}
\newcommand\close{\begin{tikzpicture}[scale=0.3]
 \draw (0,0)--(1,1); -
 \draw[thick,blue] (.6,0)--(.6,.6)--(0,.6);
\end{tikzpicture}}
\newcommand\ubounce{\begin{tikzpicture}[scale=0.3]
 \draw (.1,0)--(1,.9);
 \draw[thick,blue] (0,.45)--(.55,.45)--(.55,1);
\end{tikzpicture}}
\newcommand\lbounce{\begin{tikzpicture}[scale=0.3]
 \draw (0,.1)--(.9,1);
 \draw[thick,blue] (.45,0)--(.45,.55)--(1,.55);
\end{tikzpicture}}
\newcommand\fixed{\begin{tikzpicture}[scale=0.3]
 \draw (0,0)--(1,1);
 \fill(.5,.5) circle (.25);
\end{tikzpicture}}
\newcommand{\circlesign}[1]{
\begin{tikzpicture}[baseline=(X.base), inner sep=1]
    \node[draw,circle] (X)  {\ensuremath{#1}};
\end{tikzpicture}
}
\newcommand{\arrowpattern}{\underset{\mathsmaller{1\shortrightarrow 2}}{\underline{12}}}
\newcommand{\C}{\mathcal{C}} 
\newcommand{\R}{\mathcal{R}} 
\newcommand{\I}{\mathcal{I}} 
\newcommand{\rot}{\textnormal{Rot}} 
\newcommand{\Findstat}{{FindStat}\xspace}
\newcommand{\inv}{\textnormal{inv}}
\newcommand{\Inv}{\textnormal{Inv}}
\newcommand{\maj}{\textnormal{maj}}
\newcommand{\imaj}{\textnormal{imaj}}
\newcommand{\Des}{\textnormal{Des}}
\newcommand{\des}{\textnormal{des}}
\newcommand{\cdes}{\textnormal{cdes}}
\newcommand{\Stat}{\textnormal{Stat}}
\title[Cyclic Sieving on Permutations]{Cyclic sieving on permutations: an analysis of maps and statistics in the FindStat database}
\author[1]{Ashleigh Adams$^1$}
\address[1]{North Dakota State University.
\href{mailto:adams@math.ucdavis.edu}{ashleigh.adams@ndsu.edu}}
\author[2]{Jennifer Elder$^2$}
\address[2]{Missouri Western State University. \href{mailto:jelder8@missouriwestern.edu}{jelder8@missouriwestern.edu}}
\author[3]{Nadia Lafreni\`ere$^3$}
\address[3]{Concordia University. \href{mailto:nadia.lafreniere@concordia.ca}{nadia.lafreniere@concordia.ca}}
\author[4]{Erin McNicholas$^4$}
\address[4]{Willamette University. \href{mailto:emcnicho@willamette.edu}{emcnicho@willamette.edu}}
\author[5]{Jessica Striker$^5$}
\address[5]{North Dakota State University. \href{mailto:jessica.striker@ndsu.edu}{jessica.striker@ndsu.edu}}
\author[6]{Amanda Welch$^6$}
\address[6]{Eastern Illinois University. \href{mailto:arwelch@eiu.edu}{arwelch@eiu.edu}}
\begin{document}

\begin{abstract}
  We perform a systematic study of permutation statistics and bijective maps on permutations using SageMath to search the FindStat combinatorial statistics database to identify apparent instances of the cyclic sieving phenomenon (CSP). Cyclic sieving occurs on a set of objects, a statistic, and a map of order $n$ when the evaluation of the statistic generating function at the $d$th power of the primitive $n$th root of unity equals the number of fixed points under the $d$th power of the map. Of the apparent instances found in our experiment, we prove 34 new instances of the CSP and conjecture three more. 
Our results are organized largely by orbit structure, proving instances of the CSP for involutions with $2^{n-1}$ fixed points and $2^{\lfloor\frac{n}{2}\rfloor}$ fixed points, as well as maps whose orbits all have the same size.  The FindStat maps which exhibit the CSP include a map constructed by Corteel (using a bijection of Foata and Zeilberger) to swap the number of nestings and crossings, the invert Laguerre heap map, a map of Alexandersson and Kebede designed to preserve right-to-left minima, conjugation by the long cycle, as well as reverse, complement, rotation, Lehmer code rotation, and toric promotion.  Our results combined with those of [Elder, Lafreni\`ere, McNicholas, Striker, Welch 2023] show that, contrary to common expectations, actions that exhibit homomesy are not necessarily the best candidates for the CSP, and vice versa.
\end{abstract}

\subjclass{Primary: 05E18; Secondary: 05A05, 05A15}
    
 \keywords{permutations, permutation patterns, Cyclic sieving, dynamical algebraic combinatorics, FindStat}

\maketitle


\section{Introduction}
In \cite{ELMSW22}, we initiated the systematic study of dynamical phenomena on permutations by searching the FindStat database \cite{findstat} for pairs of maps and statistics yielding apparent instances of the \emph{homomesy} phenomenon and proving 122 such instances. In this paper, we continue this study by doing a similar search focusing on another well-studied phenomenon: the cyclic sieving phenomenon (CSP).
Defined by Reiner, Stanton, and White in 2004 \cite{ReStWh2004}, the CSP occurs for a set when evaluating a polynomial at particular roots of unity counts elements fixed under corresponding powers of an action on that set. It has been observed on various objects, including tableaux, words, set partitions, paths, permutations, and Catalan objects (see  \cite{CSPwebsite,WhatIsCSP,SaganCSP}).
One challenge to finding instances of this phenomenon is that it is necessary to compute two distinct quantities and show they coincide: orbit structure of an action and root of unity evaluations of a polynomial. Sometimes the action relates to the polynomial evaluation, but not always, so the choice of action and polynomial often requires inspired guessing or representation theoretic motivation. Our approach is the first systematic search for instances of this phenomenon.

The first example in the original CSP paper
\cite[Theorem 1.1]{ReStWh2004} includes Mahonian permutation statistics under rotation as a special case.
Another instance, proved in \cite{BRS} and also mentioned in \cite{CSPwebsite}, is related to the Robinson-Schensted-Knuth correspondence and involves the action that cyclically rotates both rows and columns of a permutation matrix and uses the generating function for the sum of the squares of the $q$-hook length formula.
The only other occurrences of the cyclic sieving phenomenon on permutations we could find in the literature concern the map of conjugation by the long cycle $(1, 2, \ldots, n)$.
We discuss this prior work in Section~\ref{sec:conj}.

We begin in Section~\ref{sec:methods} by discussing our systematic approach to the CSP on permutations using FindStat. Note that when stating a CSP result, one typically lists a triple consisting of the set, the polynomial, and the map. For every result in this paper, the set considered is the set of permutations of $[n]$ and the polynomial is the generating function of a permutation statistic. Thus, instead of referring to such a triple, we say a given permutation \emph{statistic} exhibits the CSP with respect to the orbit structure of a given map.
We also summarize the results of this paper in light of our prior study of homomesy \cite{ELMSW22} (Remark \ref{rem:list_of_overlap}).   Section~\ref{sec:bg} gives background and definitions related to permutations and the CSP. 
Our main results comprise Sections~\ref{sec:corteel} through \ref{sec:basic}, in which we
organize instances of the CSP on permutations primarily by orbit structure.
The results in Sections~\ref{sec:corteel} through \ref{sec:rev_comp}  concern involutions. If an involution exhibits the CSP for a statistic, the evaluation of the statistic generating function at $-1$ gives the number of fixed points; this is Stembridge's $q=-1$ phenomenon~\cite{JStembridge,Stembridge1994}. 

Section~\ref{sec:corteel} studies involutions with $2^{n-1}$ fixed points. FindStat contains two involutions that we show in Lemmas~\ref{lem: Corteel fixed pts} and \ref{lem: Laguerre fixed pts} have $2^{n-1}$ fixed points: the \textbf{Corteel} and the \textbf{invert Laguerre heap} maps. The Corteel map switches the numbers of crossings and nestings, while the invert Laguerre heap map can be seen as a  reflection of Reading's noncrossing arc diagram that provides a combinatorial model for canonical join representations~\cite{reading2015noncrossing}. 
The proof techniques in this section are the most subtle of this paper
and involve some proofs of statistic equidistribution that may be of independent interest (Lemmas \ref{lem:317_373_1744} and \ref{lem:equid_371}). We prove that several classes of statistics with different generating functions exhibit the CSP with respect to this orbit structure. These include the number of crossings (Theorem~\ref{thm: Corteel-crossings CSP}), the cycle descent number (Theorem \ref{thm:cycle_des_CSP}), 
the number of midpoints of decreasing subsequences of length 3 (Theorem \ref{conj:371}), and the number of occurrences of the pattern $32-1$ (Theorem  \ref{thm:357}).

Section~\ref{sec:alex-k} studies involutions with exactly  $2^{\lfloor \frac{n}{2}\rfloor}$ fixed points. The notable example in FindStat is the \textbf{Alexandersson-Kebede} map, an involution on permutations that preserves right-to-left minima,  introduced in \cite{AlexanderssonKebede} in order to give a bijective proof of an identity regarding derangements. We prove instances of the CSP for this orbit structure and statistics related to extrema of partial permutations (Theorem \ref{thm:CSP_AK} and Corollary \ref{cor:1005}) and to  inversions (Theorem \ref{thm:CSP_AK-Invisible}).

The maps that exhibit the most instances of the CSP on permutations in our study are involutions with no fixed points; prominent examples of these are the \textbf{reverse} and \textbf{complement} maps. 
We prove CSPs for these in Section~\ref{sec:rev_comp} using two main techniques: (1) finding the specific form of the statistic generating function and  using it to show that $f(-1)=0$ (for example, Theorem \ref{circled_shifted}), and (2) constructing specific involutions $\psi:S_n \rightarrow S_n$ to pair the permutations such that $\text{stat}(\sigma)$ and $\text{stat}(\psi(\sigma))$ have the opposite parity (for example, Theorem \ref{conj:patterns}).
Some statistics of interest (with distinct generating function) include:
the number of circled entries of the shifted recording tableau (Theorem \ref{circled_shifted}), 
 the number of inversions of distance at most $2$ (Theorem \ref{inv_at_most_2}),  
and the standardized bi-alternating inversion number (\Cref{thm:bi_alt}).

In Section~\ref{sec:basic}, we complete our study by focusing on specific statistics and proving the CSP for multiple orbit structures at once. 
We establish several CSPs for maps whose orbits all have the same size 
(Theorem \ref{Mahonian} and Proposition
\ref{thm:rank}). Such maps include  \textbf{rotation}, \textbf{Lehmer code rotation} and \textbf{toric promotion}.  Featured statistics include the Mahonian statistics, such as the major index and the number of inversions. We also find that the $i$-th entry of a permutation exhibits the CSP with respect to the rotation map (Theorem \ref{thm:ith_entry}), and that toric promotion has the CSP for the number of inversions of the second entry (Corollary \ref{cor:toric_pro_2nd_entry}) and a descent variant minus the inversion number (Proposition \ref{thm:1911}).

\subsection*{Acknowledgments} We thank Theo Douvropoulos for suggesting to extend the methods of our previous paper \cite{ELMSW22} to the cyclic sieving phenomenon and Bishal Deb, Sergi Elizalde, and the anonymous referees for helpful comments. We thank the developers of SageMath and FindStat for developing computational tools used in this research. This research began during the authors' Jan.\ 2023 visit to the Institute for Computational and Experimental Research in Mathematics in the Collaborate@ICERM program. Much of it was completed at the 2023 Dynamical Algebraic Combinatorics mini-conference at North Dakota State University, with funding from NSF grant DMS-2247089 and the NDSU foundation. JS acknowledges support from Simons Foundation gift MP-TSM-00002802 and NSF grant DMS-2247089.


\section{A systematic, statistic-oriented approach to the CSP}
\label{sec:methods}
Our approach to finding instances of the CSP is via a systematic search, rather than testing pairs of specific maps and polynomials that arise in other research. For this study, we focused our attention on maps and statistics on permutations in \Findstat---an online database of combinatorial maps and statistics developed by Berg and  Stump in 2011 and currently maintained by Rubey and Stump \cite{findstat}.  Like other fingerprint databases highlighted in~\cite{fingerprint}, \Findstat is a searchable collection of information about a class of mathematical objects.  
The interface between SageMath \cite{sage} and \Findstat makes it possible to not only retrieve stored information from the \Findstat database, but use that information to conjecture new connections.

For each statistic on permutations, we used SageMath to evaluate the statistic generating function at the appropriate roots of unity to test whether this matched the corresponding fixed point counts of any bijective map on permutations in the database. The experiment we conducted was based on the \Findstat database as of January 30, 2024, and was done for all permutations of $4$ to $6$ elements. This range in $n$ was chosen to limit false negatives, as some statistics are not meaningful for very small permutations, and to be executed in a reasonable amount of time. 
We looked at all pairs made of the 24 bijective maps and 400 permutation statistics in \Findstat, and found counter-examples to the cyclic sieving phenomenon for all but 200 pairs. After removing repetitions for pairs that are known to have the same orbit structure and the same statistic generating function, we found 36 potential instances of the CSP, meaning distinct pairs made of an orbit structure and a statistic generating function. Four of these  were previously proven CSPs discussed in the introduction.
Of the remaining 32, we prove 29 new instances of the CSP, prove one for  odd values of $n$ and conjecture it is true for even $n$, and leave two others as conjectures.  We also provide three new interesting proofs of equidistribution of statistics.
In total, the number of pairs of a map and a permutation statistic from FindStat for which we have a proof of cyclic sieving for all values of $n \geq 4$ is $194$. 

For all other pairs of a permutation statistic and a map in FindStat, we found counter-examples to the CSP. This means that pairs of maps and statistics in FindStat that are not mentioned here do not exhibit the CSP, or at least not for all values of $n \geq 4$. For instance, the inverse map on permutations did not appear in our search for potential CSPs, so this map does not exhibit the CSP for all $n$ with any statistic in FindStat. Therefore, even though we present here CSPs for several involutions (with a varying number of fixed points), there is no statistic listed in FindStat that exhibit the phenomenon under involutions with as many fixed points as there are standard young tableaux (which is the number of fixed points of the inverse map). While working on the project, we noticed that it is possible that some statistics exhibit the CSP only for an infinite subset of values of $n$. We give here four examples of statistics that have the CSP for all odd or all even values of $n$ (see Corollary \ref{cor:Corteel_1004} and Theorem \ref{strict_3_des}), but we did not attempt a systematic study of the CSP for only odd or even values. Including these results, we prove 34 instances of the CSP in total.

The statistic generating function information in FindStat is largely empricially derived and only stored for $n\leq 6.$ While FindStat will suggest possible maps or compositions of maps that might provide a bijection between statistics, these suggestions are based on data for small $n$ and do not prove equidistribution.  Some of our equidistribution proofs are inspired by FindStat suggestions, but in many cases we construct a novel bijection based on the definitions of the statistics to prove equidistribution. 

In the statements about our results, we refer to statistics and maps by their name as well as their \Findstat identifier (ID).  This makes it easier for the reader to find the associated \Findstat webpage.  For example, the URL for Statistic $356$, the number of occurrences of the pattern $13-2$, is: \url{http://www.findstat.org/StatisticsDatabase/St000356}. The highest ID for a bijective map on permutations was 310, and the highest ID for a statistic on permutations was 1928.

Due to the breath of the FindStat database, our study gives a good portrait of occurrences of the phenomenon on permutations.  In \cite{ELMSW22}, we conducted a similar study of a dynamical phenomenon on permutations called homomesy.  The results of the two projects combined give a good portrait of the differences between homomesy and the CSP for permutation maps and statistics, as noted in the remark below.

\begin{remark}\label{rem:list_of_overlap} In past studies, some objects and actions were found that exhibit both homomesy and the CSP; it was therefore often believed that the best candidates for exhibiting the CSP were known occurrences of homomesy, and vice versa. 
It is therefore interesting to consider how
many of the map--statistic pairs from our homomesy paper \cite{ELMSW22}, as well as the follow-up \cite{dowling2023homomesy}, also exhibit the CSP.  We found the only pairs of maps and statistics that exhibit both homomesy and the CSP are in Sections \ref{sec:rev_comp} and \ref{sec:basic} of this paper. We list these below, organized by map.

\begin{itemize}
    \item Lehmer code rotation: Statistic 20.
    \item Rotation: Statistics 54, 740, 1806, and 1807.
    \item Both reverse and complement: Statistic 18 (Mahonian); Statistics 495, 538, and 677; Statistics 21 and 1520 (for $n$ even); and Statistics 836 and 494 (for $n$ odd).
    \item Complement but not reverse: 
 Statistics 4, 20, and 692 (Mahonian); Statistics 1114 and 1115.
    \item Reverse but not complement: 
 Statistics 304, 305, 446, and 798 (Mahonian).
\end{itemize}

  In this paper, the reverse and complement are paired together as they have the same orbit structure. However, their lists are different in \cite{ELMSW22} because homomesy does not solely rely on orbit structure. In \cite{dowling2023homomesy}, rotation exhibits homomesy when paired with 34 statistics, but the only statistics that also exhibit the CSP are those related to specific entries. We also note that the Lehmer code rotation exhibits homomesy when paired with 45 statistics, and only exhibits the CSP when paired with one statistic.

In \cite{ELMSW22}, we found nine permutation maps that exhibit homomesy, and six of those maps do not exhibit any instances of the CSP. Similarly, in this paper we consider Corteel and invert Laguerre heap, Alexandersson-Kebede, and conjugation by the long cycle, none of which exhibit any homomesy. Finally, we include the map toric promotion in Section \ref{sec:basic} of this paper, but it is an open question whether the map exhibits homomesy with any permutation statistics. We did not investigate the latter, since the toric promotion map was not in FindStat at the time of the investigation of \cite{ELMSW22}.
\end{remark}

It would be interesting to conduct investigations of the CSP or homomesy involving compositions of FindStat maps. We did not attempt this in the current paper or \cite{ELMSW22}, but leave this to future work. Another interesting direction for future study would be to use the approach of \cite{AlexAmini} to find abstract conditions under which a generating function on permutations may exhibit the CSP.


\section{Background}\label{sec:bg}
In this section, we collect background information on permutations (Subsection~\ref{sec:bg_perm}) and the cyclic sieving phenomenon (Subsection~\ref{sec:bg_CSP}), as well as prior work on the CSP on permutations with respect to conjugation by the long cycle (Subsection~\ref{sec:conj}). Readers  may skip familiar subsections or refer to them as needed.
\subsection{Permutations}
\label{sec:bg_perm}
\begin{definition}
  A \textbf{permutation} of $[n]:=\{1,2,\ldots n\}$ is a bijective map $\sigma:[n]\rightarrow[n]$. We often denote a permutation $\sigma$ as an ordered list $\sigma_1\sigma_2\ldots\sigma_n$ where $\sigma_i=\sigma(i)$, and refer to this as the one-line notation of~$\sigma$. The set of permutations of $[n]$ is often denoted as $S_n$.
\end{definition}

We define below a few common maps and statistics on the set of all permutations; less familiar maps and statistics will be defined in the relevant sections.

\begin{definition}\label{def:maps}
  For a permutation $\sigma = \sigma_1 \ldots \sigma_n$,  the \textbf{reverse} of $\sigma$ is $\R(\sigma) = \sigma_n\ldots \sigma_1$, and the \textbf{complement} of $\sigma$ is $\C(\sigma) = (n+1-\sigma_1)\ldots(n+1-\sigma_n)$. That is,   $\R(\sigma)_i=\sigma_{n+1-i}$ and $\C(\sigma)_i =n+1 - \sigma_i$. Furthermore, the \textbf{inverse} of $\sigma$, denoted $\I(\sigma),$ is the permutation  such that $\I(\sigma) \circ \sigma = \sigma \circ \I(\sigma) = 1\cdots n$, and the \textbf{rotation} of $\sigma$ is $\rot(\sigma) = \sigma_2\sigma_3\ldots \sigma_n\sigma_1$.
\end{definition}

\begin{definition}
  \label{def:common_stats}
  We say that $(i,j)$ is an \textbf{inversion} of a permutation $\sigma=\sigma_1\sigma_2\ldots\sigma_n$ if $i<j$ and $\sigma_j < \sigma_i$. We write $\Inv(\sigma)$ for the set of inversions in $\sigma$. We say that $(\sigma_i, \sigma_j)$ is an \textbf{inversion pair} of $\sigma$ if $(i, j)$ is an inversion of $\sigma$. The \textbf{inversion number} of a permutation $\sigma$, denoted $\inv(\sigma)$, is the number of inversions.

  We say that $i$ is a \textbf{descent} whenever $\sigma_i > \sigma_{i+1}$. We write $\Des(\sigma)$ for the set of descents in $\sigma$, and $\des(\sigma)$ for the number of descents.  If $i \in [n-1]$ is not a descent, we say that it is an \textbf{ascent}.
  The \textbf{major index} of $\sigma$, denoted $\maj(\sigma)$, is the sum of its descents.

  We say that $\sigma_i$ is a \textbf{left-to-right-maximum} of $\sigma$ if there does not exist $j<i$
  such that $\sigma_j>\sigma_i$. Left-to-right minima, right-to-left maxima, and right-to-left minima are defined similarly.
\end{definition}

\begin{definition}\label{def:fund_transform}
  Let $\sigma = \sigma_1 \ldots \sigma_n$ be a permutation. We define the \textbf{first fundamental transform}, denoted $\mathcal{F}(\sigma)$, as follows:
  \begin{itemize}
    \item Place parenthesis $($ to the left of $\sigma_1$, and $)$ to the right of $\sigma_n$.
    \item Before each left-to-right maximum in $\sigma$, insert a parenthesis pair $)($.
  \end{itemize}
  The resulting parenthesization is the cycle decomposition for $\mathcal{F}(\sigma)$ \cite{findstat}. For example, for $\sigma = 241365$, we have $\mathcal{F}(\sigma)=(2)(413)(65)$.
\end{definition}

\begin{remark}\label{remark:fund_bij}
  We note that Stanley defines the inverse of $\mathcal{F}$ as the \textbf{fundamental bijection} \cite[Page 23]{Stanley2011}, which is also known as the first fundamental transform.
  Stanley shows that each cycle in $\sigma$ is mapped to a left-to-right maximum in $\mathcal{F}^{-1}(\sigma)$. Thus $\mathcal{F}$ also maps each left-to-right maximum to a cycle in $\mathcal{F}(\sigma)$. 
  For clarity within the context of this paper, we use the definition given on FindStat, even though it uses a convention opposite to that of Stanley.
\end{remark}

We also study statistics related to permutation patterns of length $3$.
\begin{definition}
  \label{def:patterns}
  Let $\pi=abc\in S_3$ and $\sigma\in S_n$. The permutation $\sigma$ \textbf{contains the pattern $abc$} if there exist $1\leq i<j<k\leq n$ such that $\sigma_i,\sigma_j,\sigma_k$ are in the same relative order as $a,b,c$. The permutation $\sigma$ \textbf{contains the consecutive pattern $a-bc$} if, in addition, $k=j+1$, that is, if $\sigma_j$ and $\sigma_k$ are consecutive in the one-line notation of $\sigma$.
\end{definition}

Finally, we define and give an example of a statistic generating function, as these are central objects in this paper.
\begin{definition}
  \label{def:stat_gen_fun}
  Let $n\geq 2$ be an integer, and $\textrm{stat}(\sigma)$ be a permutation statistic defined on all $\sigma \in S_n$. A \textbf{statistic generating function} on $S_n$ is a polynomial defined as follows:
  \[f(q) = \sum_{\sigma \in S_n} q^{\textrm{stat}(\sigma)}.\]
\end{definition}
Note that one immediate consequence of this definition is that $f(1)=n!$.

\begin{example}
  We note that in $S_3$, there is one permutation with 0 descents, four permutations with 1 descent, and one permutation with 2 descents. Therefore,
  \[f(q) = \sum_{\sigma \in S_3} q^{\textrm{des}(\sigma)} = 1+4q^1+q^2.\]
\end{example}

We say that two statistics are \textbf{equidistributed} if they have the same statistic generating function.

\subsection{The cyclic sieving phenomenon}
\label{sec:bg_CSP}
In 2004, the notion of cyclic sieving was first introduced by Reiner, Stanton, and White with the following definition.

\begin{definition}[\cite{ReStWh2004}]
  Given a set $S$, a polynomial $f(q)$, and a bijective action $g$ of order $n$, the triple $(S,f(q),g)$ exhibits the \textbf{cyclic sieving phenomenon} 
  if, for all $d$, $f(\zeta^d)$
  counts the elements of $S$ fixed under $g^d$, where $\zeta=e^{2\pi i /n}$.
\end{definition}

While at first glance it seems unlikely to find such a triple that both works and is of interest, there have been many examples. In \cite{SaganCSP}, Sagan surveyed the current literature to show examples of CSP involving mathematical objects such as Coxeter groups, Young tableaux, and the Catalan numbers. Further, more examples of CSP on mathematical objects such as words, paths, matchings and crossings can be found at the online Symmetric Functions Catalog \cite{CSPwebsite} maintained by Alexandersson. For a brief and insightful introduction to cyclic sieving, we refer readers to \cite{WhatIsCSP}, where more examples of CSP occurring for a variety of mathematical objects can be found.

In a paper pre-dating the  definition of the CSP \cite{JStembridge}, Stembridge presented the ``$q = -1$'' phenomenon that coincides with the CSP when the order of the bijective action is $2$. In that case, showing that the triple exhibits CSP is equivalent to showing that $f(1)$ equals the cardinality of $S$ and $f(-1)$ is the number of fixed points of the bijection. So, if the action $g$ is an involution, then $f(-1)$ gives a closed form enumeration of the fixed points. 
This ``$q = -1$'' phenomenon helped inspire the definition of the CSP and is integral to the work of this paper, as many of the maps we investigate are involutions.

Statistics with the same generating function exhibit the CSP for the same orbit structures. In fact, it suffices for generating functions to be the same modulo $q^{n-1}$. In some cases, we do not know explicitly the statistic generating function, but proved equidistribution through statistic-preserving bijections.

\subsection{Prior work on the CSP for conjugation by the long cycle}
\label{sec:conj}
Conjugation by the long cycle $(1, 2, \ldots, n)$  (FindStat Map 265) has orbits whose sizes are the divisors of $n$. 
Using representation theory, Barcelo, Reiner, and Stanton proved that some pairs of Mahonian statistics exhibit the \emph{bicyclic sieving phenomenon} \cite[Theorem 1.4]{BRS}, which implies linear combinations of these  statistics exhibit the CSP.
Specifically, we have the following corollary of \cite[Theorem 1.4]{BRS}.

\begin{cor}
  The following statistics exhibit the CSP under the conjugation by the long cycle map:
  \begin{itemize}
    \item Statistic $825$: The sum of the major and the inverse major index.
    \item Statistic $1379$: The number of inversions plus the major index.
    \item Statistic $1377$: The major index minus the number of inversions.
    \item The major index minus the inverse major index.
  \end{itemize}
\end{cor}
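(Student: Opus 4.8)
The plan is to deduce this corollary from the bicyclic sieving phenomenon of \cite[Theorem 1.4]{BRS}, which asserts that a suitable \emph{pair} of Mahonian statistics --- typically $(\maj, \imaj)$ or $(\inv, \maj)$ --- jointly exhibits cyclic sieving under conjugation by the long cycle, with a bivariate polynomial evaluated at pairs of roots of unity counting fixed points. The key observation that bridges the bicyclic statement to the single-variable CSP claims is the following: if a bivariate generating function $F(q,t) = \sum_{\sigma \in S_n} q^{\mathrm{stat}_1(\sigma)} t^{\mathrm{stat}_2(\sigma)}$ bicyclically sieves, then specializing along a diagonal or anti-diagonal --- setting $t = q$, $t = q^{-1}$, or $t = \zeta^a q$ for an appropriate root of unity --- produces a univariate polynomial whose root-of-unity evaluations compute the same fixed-point counts. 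Each of the four listed statistics is exactly such a linear combination: $\maj + \imaj$, $\inv + \maj$, $\maj - \inv$, and $\maj - \imaj$.

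First I would state precisely the bicyclic sieving conclusion from \cite{BRS}, recording which bivariate polynomial (a product of $q$-analogues, presumably a $q,t$-multinomial or principal specialization of the $q$-hook content formula) is involved and over which pair of roots of unity $(\zeta^d, \zeta^{e})$ the evaluation counts points fixed by $g^d$ under the relevant pair of commuting actions. Second, I would verify that conjugation by the long cycle is precisely the single action whose $d$-th power fixed points appear in the bicyclic statement when the two root-of-unity parameters are suitably tied together, so that the bicyclic count collapses to the ordinary cyclic sieving count of the map at hand. Third, for each of the four statistics, I would perform the appropriate substitution into the bivariate polynomial: for a sum such as $\maj + \imaj$, set both variables equal so that $q^{\maj + \imaj}$ arises from $q^{\maj} q^{\imaj}$; for a difference such as $\maj - \inv$, set the second variable to $q^{-1}$ (clearing denominators by the fact that statistic generating functions agree modulo $q^{n-1}$, as noted after the definition of the CSP). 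Then I would confirm that the resulting univariate evaluation at $\zeta^d$ matches the fixed-point count of the map's $d$-th power.

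The main obstacle will be the bookkeeping in step three: ensuring that the diagonal or anti-diagonal specialization of the bivariate sieving polynomial genuinely yields a \emph{polynomial} whose root-of-unity values are the fixed-point counts, rather than an ill-defined or fractional expression. In particular, the anti-diagonal substitution $t \mapsto q^{-1}$ introduces negative powers, so I must argue that clearing by a power of $q$ does not disturb the CSP --- this is exactly where the remark that it suffices for generating functions to agree modulo $q^{n-1}$ does the work, since multiplication by $q^{k}$ is invisible to evaluations at $n$-th roots of unity only after reduction, and one checks the degree bounds to confirm no collision occurs. A secondary subtlety is matching \Findstat's sign and orientation conventions for $\maj$, $\imaj$, and $\inv$ against those of \cite{BRS}, since $\inv(\sigma) = \inv(\sinv)$ but $\maj(\sigma) \neq \maj(\sinv)$ in general, so I would take care that ``inverse major index'' and ``major index'' are assigned to the correct variable before specializing. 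Once these conventions are pinned down, each of the four cases is a direct, essentially mechanical, consequence of the bicyclic result.
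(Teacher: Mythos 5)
Your proposal follows essentially the same route as the paper: the paper's proof likewise invokes \cite[Theorem 1.4]{BRS} (with $\sigma$ the identity and $W=S_n$) to get the biCSP for the pairs $(\inv,\maj)$ and $(\maj,\imaj)$, and then obtains the CSP for the four linear combinations $\maj\pm\inv$ and $\maj\pm\imaj$ exactly by the diagonal and anti-diagonal specializations you describe. Your plan is, if anything, more careful than the paper's two-sentence argument about the Laurent-polynomial and convention issues, so it is correct and matches the intended proof.
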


\begin{proof}
  In the case $\sigma$ equals the identity and $W=S_n$, \cite[Theorem 1.4]{BRS}  implies that the bimahonian statistic pairs $(\inv,\maj)$ and $(\maj,\imaj)$ exhibit the biCSP. As a consequence, the linear combinations $\maj-\inv$, $\maj-\imaj$, $\maj+\inv$, and $\maj+\imaj$ each exhibit the CSP. $\maj-\imaj$ is not in FindStat, but the other three are.
\end{proof}

Later, Sagan, Shareshian and Wachs  in \cite[Theorem 1.2]{SSW} showed that the major index minus the number of excedances (Statistic 462) exhibits the cyclic sieving phenomenon for conjugation by the long cycle.  The proof involved an evaluation of the coefficients of Eulerian polynomials.

\begin{thm}[\protect{\cite[Theorem 1.2]{SSW}}] The major index minus the number of excedances (Statistic $462$) exhibits the CSP under conjugation by the long cycle.
\end{thm}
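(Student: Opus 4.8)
The plan is to verify the defining property of the CSP directly: with
\[
f_n(q)=\sum_{\sigma\in S_n}q^{\maj(\sigma)-\exc(\sigma)}\qquad\text{and}\qquad \zeta=e^{2\pi i/n},
\]
and the action being conjugation by the long cycle $c=(1,2,\ldots,n)$, whose $d$-th power sends $\sigma\mapsto c^d\sigma c^{-d}$, I must show that $f_n(\zeta^d)$ equals the number of $\sigma\in S_n$ fixed by the $d$-th power for every $d$. The right-hand side is the easy half: $\sigma$ is fixed exactly when it commutes with $c^d$, and since $c^d$ has cycle type consisting of $g:=\gcd(d,n)$ cycles each of length $n/g$, the number of such $\sigma$ is the centralizer order $(n/g)^{g}\,g!$. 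As $d$ ranges over the residues with $\gcd(d,n)=g$, the power $\zeta^d$ ranges over \emph{all} primitive $m$-th roots of unity with $m=n/g$; since these are Galois-conjugate over $\mathbb{Q}$ and $f_n$ has rational coefficients, it suffices to establish the single identity
\[
f_n(\omega_m)=m^{\,n/m}\,(n/m)!\qquad\text{for each divisor }m\mid n,
\]
for one primitive $m$-th root of unity $\omega_m$ (the case $m=1$ recovering $f_n(1)=n!$).

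For the left-hand side I would bring in the Shareshian--Wachs theory of $q$-Eulerian polynomials. Writing $A_n(q,t)=\sum_{\sigma\in S_n}q^{\maj(\sigma)-\exc(\sigma)}t^{\exc(\sigma)}$, so that $f_n(q)=A_n(q,1)$, these polynomials are governed by the exponential generating function
\[
\sum_{n\ge 0}A_n(q,t)\,\frac{z^n}{[n]_q!}=\frac{(1-t)\,e_q(z)}{e_q(zt)-t\,e_q(z)},\qquad e_q(z)=\sum_{n\ge0}\frac{z^n}{[n]_q!}.
\]
Sending $t\to1$ (an indeterminate form resolved by one application of l'H\^{o}pital's rule in $t$) collapses this to the closed form
\[
\sum_{n\ge 0}f_n(q)\,\frac{z^n}{[n]_q!}=\frac{e_q(z)}{e_q(z)-z\,\partial_z e_q(z)},
\]
which is the object I would evaluate coefficientwise at $q=\omega_m$.

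The crux, and the step I expect to be the main obstacle, is this root-of-unity evaluation, because one cannot substitute $q=\omega_m$ term by term: as $q\to\omega_m$ the factor $[km]_q=\tfrac{1-q^{km}}{1-q}$ acquires a simple zero for each $k\ge1$, so $[n]_q!$ vanishes to order $a:=\lfloor n/m\rfloor$ while the $z^n$-coefficient of the generating function has a pole of the same order coming from the $e_q$'s in its denominator. The way through is a controlled limiting analysis: the leading coefficient of $[n]_q!$ as $q\to\omega_m$ has the shape $m^{a}\,a!\,[b]_{\omega_m}!$ times an explicit unit, where $b=n-am$, and one matches this against the leading Laurent term of the generating-function coefficient to extract the finite value $f_n(\omega_m)=\lim_{q\to\omega_m}A_n(q,1)$. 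When $m\mid n$ we have $b=0$ and $a=n/m$, so the factor $m^{n/m}(n/m)!$ should emerge precisely from the $m^{a}a!$ in the factorial's leading term together with the behaviour of $e_q(z)-z\,\partial_z e_q(z)$ at $q=\omega_m$. This ``evaluation of coefficients of Eulerian polynomials'' is the technical heart of the argument.

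As a conceptual backup I would keep in mind the representation-theoretic viewpoint of Reiner--Stanton--White: the target identity $f_n(\omega_m)=m^{n/m}(n/m)!$ asserts that $f_n(q)$ is the graded trace of $c$ acting on a graded $S_n$-module whose restriction to $\langle c\rangle$ realizes the conjugation representation on $\mathbb{C}[S_n]$, whose character at $c^d$ is exactly the centralizer order. Producing such a graded module---for instance via the Shareshian--Wachs symmetric-function refinement of $A_n(q,t)$ expanded in the power-sum basis, where primitive root-of-unity specializations are controlled by cycle type---would give a more structural route to the same evaluation, and hence to the CSP.
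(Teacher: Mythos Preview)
The paper does not give its own proof of this theorem; it simply cites \cite[Theorem 1.2]{SSW} and remarks that ``the proof involved an evaluation of the coefficients of Eulerian polynomials.'' Your outline is precisely that approach: you set up the Shareshian--Wachs $q$-Eulerian generating function, specialize $t\to 1$, and aim to evaluate the resulting coefficients at primitive $m$-th roots of unity to match the centralizer counts $m^{n/m}(n/m)!$. The reduction via the Galois argument and the centralizer computation are both correct, and the $t\to1$ limit you compute is right.

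That said, what you have written is explicitly a plan rather than a proof: you correctly identify the root-of-unity limit as ``the technical heart'' and ``the main obstacle,'' but you do not carry it out. The matching of the order-$\lfloor n/m\rfloor$ zero of $[n]_q!$ against the pole of the generating-function coefficient, and the extraction of the constant $m^{n/m}(n/m)!$, is exactly the substantive computation in \cite{SSW}, and it requires real work (in the original, this is done via a careful analysis of the $q$-Eulerian numbers themselves at roots of unity, not quite at the level of the exponential generating function as you frame it). So your approach is sound and aligned with the cited source, but the proof is not complete until that evaluation is actually performed.
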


We collect subsequent results on equidistributed statistics in the following corollary; specific citations are given in the proof.
\begin{cor}
The following statistics are equidistributed with the major index minus the number of excedances, and so exhibit the CSP with respect to conjugation by the long cycle.
\begin{itemize}
    \item Statistic $463$: The number of admissible inversions of a permutation.
    \item Statistic $866$: The number of admissible inversions of a permutation in the sense of Shareshian-Wachs.
    \item Statistic $961$: The shifted major index of a permutation.
  \end{itemize}
  \end{cor}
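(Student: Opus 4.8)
The plan is to reduce the corollary to three equidistribution statements and then invoke a principle already recorded in Subsection~\ref{sec:bg_CSP}: statistics sharing a generating function exhibit the CSP for exactly the same orbit structures (indeed it suffices that the generating functions agree modulo $q^{n-1}$). Since the preceding theorem of Sagan, Shareshian, and Wachs establishes that Statistic $462$, namely $\maj-\exc$, exhibits the CSP under conjugation by the long cycle, it suffices to prove that each of Statistics $463$, $866$, and $961$ has the same generating function on $S_n$ as $\maj-\exc$. No separate root-of-unity computation is then required; the CSP transfers automatically, and the orbit structure of the long cycle never needs to be revisited.

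First I would settle on the precise FindStat definitions of the three statistics, since ``admissible inversions'' (Statistic $463$) and ``admissible inversions in the sense of Shareshian-Wachs'' (Statistic $866$) are listed as distinct and so must be handled separately. For the admissible-inversion statistics the natural tool is the Shareshian-Wachs theory developed in their work on $q$-Eulerian polynomials and Eulerian quasisymmetric functions, where these statistics were introduced precisely to track the distribution of $\maj-\exc$; I would cite the relevant equidistribution statement for each of $463$ and $866$, verifying in each case that the FindStat encoding matches the statistic appearing in that source. The shifted major index (Statistic $961$) I would treat the same way, locating the identity in the literature that equates its distribution with that of $\maj-\exc$.

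Where a direct citation is unavailable or the definitional match is not transparent, the fallback is to construct an explicit statistic-preserving bijection $S_n\to S_n$ carrying $\maj-\exc$ to the statistic in question. Because we only need equality of generating functions, any such bijection suffices; it need not interact with the long-cycle conjugation action at all.

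I expect the main obstacle to be bookkeeping rather than conceptual: matching the FindStat definitions of Statistics $463$, $866$, and $961$ to the precise formulations in the source papers, and confirming that the two different ``admissible inversion'' conventions each yield the $\maj-\exc$ distribution. Once these equidistributions are pinned down, the CSP conclusion is immediate from the transfer principle of Subsection~\ref{sec:bg_CSP}.
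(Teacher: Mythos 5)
Your proposal is correct and follows essentially the same route as the paper: reduce the corollary to three equidistribution claims and then invoke the transfer principle of Subsection~\ref{sec:bg_CSP}, so that the CSP proved for $\maj-\exc$ under conjugation by the long cycle carries over with no further root-of-unity computation. The paper instantiates exactly your plan, including your fallback: it cites Shareshian--Wachs (2007, Theorem 4.1) for Statistic 866 and Shareshian--Wachs (2016, Theorem 9.7) for Statistic 961, while Statistic 463 (the Lin--Zeng convention) is handled not by a direct citation but by an explicit bijection --- applying reverse composed with complement --- which carries Lin--Zeng admissible inversions to Shareshian--Wachs admissible inversions, thereby reducing 463 to the already-settled case of 866.
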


\begin{proof}
  Both notions of admissible inversions, and were defined in the context of Eulerian polynomials. An \textit{admissible inversion} (Statistic 463) of a permutation $\sigma$ is defined by Lin and Zeng  \cite{LinZeng}  as an inversion $(i,j)$ that further satisfies that $1<i$ and $\sigma(i-1)<\sigma(i)$ or that there is some index $k$ between $i$ and $j$ for which $\sigma(i)<\sigma(k)$. Similarly, Shareshian and Wachs gave the definition of an admissible inversion (Statistic 866) as an inversion $(i,j)$ for which either $\sigma(j)<\sigma(j+1)$ or there exists an index $k$ between $i$ and $j$ satisfying $\sigma(k)<\sigma(j)$ \cite{ShareshianWachs2007}. For a permutation $\sigma$, consider its reverse $\R(\sigma)$ and its complement, $\C(\sigma)$. Then, if $(i,j)$ is an admissible inversion of $\sigma$ in the sense of Lin and Zeng, we claim that $(i,j)$ is an admissible inversion of $\R\circ\C(\sigma)$ in the sense of Shareshian and Wachs.
  One can check that for a permutation $\sigma$, $\sigma(i)>\sigma(j)$ exactly when $\R\circ\C(\sigma)(i)>\R\circ\C(\sigma)(j)$ for any $(i, j)$. Then, applying the reverse and the complement to a permutation transforms the admissible inversions in the sense of Lin and Zeng to admissible inversions in the sense of Shareshian and Wachs.

  Shareshian and Wachs also showed \cite[Theorem 4.1]{ShareshianWachs2007} that the major index minus the number of excedances is equidistributed with the number of admissible inversions (in their sense).

  Finally, Shareshian and Wachs showed \cite[Theorem 9.7]{ShareshianWachs2016} that the major index minus the number of excedances is equidistributed with the shifted major index, which is defined as the sum
$\displaystyle\sum_{\substack{i \in [n-1]\\ \sigma(i)>\sigma(i+1)+1}} i$.
\end{proof}


\section{Involutions with $2^{n-1}$ fixed points}
\label{sec:corteel}
In this section, we prove CSPs on involutions with exactly $2^{n-1}$ fixed points.  We give examples of two maps having this orbit structure in Subsection \ref{subsec:CorteelLaguerre}, and prove CSP results for four classes of equidistributed statistics in Subsections \ref{subsec: CorteelCSP}-\ref{subsec:Corteel 321}.

\subsection{Corteel and invert Laguerre heap maps}\label{subsec:CorteelLaguerre}


We begin by defining two involutions and showing they each have $2^{n-1}$ fixed points.  
The first map, which FindStat calls {Corteel's map}, was constructed in \cite{corteel2007crossings} to interchange the number of crossings and the number of nestings of a permutation, thus giving  a combinatorial proof of their equidistribution.  It applies a bijection of Foata and Zeilberger~\cite{FoataZeilberger} to a permutation, then takes the complement of the resulting colored Motzkin path relative to the height of the path, and then applies the inverse Foata-Zeilberger bijection to find the corresponding permutation. (Similar bijections between labelled Motzkin paths and permutations are due to Francon and Viennot~\cite{FranconViennot1979}.)  In Definition~\ref{def: Corteel} we explain each step more precisely, giving visual examples that rely heavily on the cycle diagrams of Elizalde found in \cite{elizalde2018continued}. See also \cite[\S6]{BishalDeb} for further discussion and examples.

\begin{definition}\label{def: Corteel}
  For a given permutation $\sigma$, vertical lines on the cycle diagram connect the point $(i,i)$ to $(i,\sigma(i))$, horizontal lines connect $(i,i)$ to $(\sigma^{-1}(i),i)$ (see Figure \ref{fig: CycleDiagramEx}).  Each vertex $(i,i)$ in the cycle diagram of $\sigma\in S_n$ can be classified into one of five types: an upward facing bracket \open, a downward facing bracket \close, a bounce from below \lbounce, a bounce from above \ubounce, or a fixed point \fixed.  We set $u=$\open, $d=$\close, $r=$\lbounce, and $b$ equal to either \ubounce or \fixed (this notation corresponds to the code from the FindStat entry for the Corteel map).  Write a word $w$ whose $i$th letter $w_i$ is the letter corresponding to vertex $(i,i)$. Transform this to a Motzkin path where the letter $u$ corresponds to an up-step, $d$ a down-step, and $b$ and $r$ both correspond to a horizontal step.

  The nesting number $p_i$ is the number of arcs in the cycle diagram containing the vertex $(i,\sigma(i)),$ where containment is defined from above if $(i,\sigma(i))$ is on or above the diagonal (i.e. if $(i,i)$ is associated with $u$ or $b$), and below if $(i,\sigma(i))$ is below the diagonal (i.e. if $(i,i)$ is associated with $r$ or $d$).  More precisely,
  \[
    p_i= \begin{cases}
      \#\{j<i \ | \ \sigma(i)<\sigma(j)\}   & (i,i)\mbox{ is associated with } u \mbox{ or } b, \\
      \#\{i<j<n \ | \ \sigma(i)>\sigma(j)\} & (i,i)\mbox{ is associated with } r \mbox{ or } d.
    \end{cases}
  \]
  The colored Motzkin path corresponding to $\sigma$ is completely described by a word-weight pair $(\mathbf{w},\mathbf{p})$ where $\mathbf{w}$ is the associated word of length $n$ in $\{u,d,r,b\}$ and $\mathbf{p}$ is the vector of non-negative integer weights (or colors) given by the associated nesting numbers $p_i$.

  Let $h_i$ denote the height of step $i$ in the corresponding Motzkin path, which is defined to be the height of the step's lowest endpoint.  The complement of $(\mathbf{w},\mathbf{p})$ is the colored Motzkin path corresponding to $(\mathbf{w},\mathbf{\bar{p}})$ where $\bar{p_i}=h_i-p_i-1$ if $w_i=r$ and $h_i-p_i$ otherwise (see Figure \ref{fig: MotzkinPathEx}).  The \textbf{Corteel map} (Map 239) sends $\sigma$ to the permutation found by applying the inverse Foata-Zeilberger bijection, i.e.\ the unique permutation whose cycle diagram corresponds to $(\mathbf{w},\mathbf{\bar{p}})$.
\end{definition}

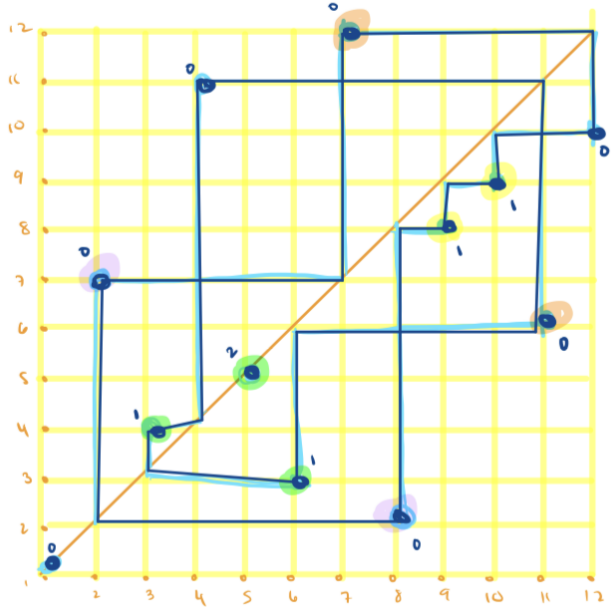
\begin{figure}[htbp]
  \centering
\begin{tikzpicture}[scale = .8]
    \def\myPerm{{1,7,6,3,8,10,9,12,2,11,4,5}}
    \def\myLabels{0,0,1,1,0,0,1,0,0,1,0,0}
    
    \foreach \i in {1,...,12} {
        \draw [thin,black] (\i,1) -- (\i,12)  node [below] at (\i,1) {$\i$};
    }
    \foreach \i in {1,...,12} {
        \draw [thin,black] (1,\i) -- (12,\i) node [left] at (1,\i) {$\i$};
    }
    
    \draw[very thick, teal] (1,1) -- (12,12);
    
    \draw[fill=teal,opacity=0.2] (2,2) -- (9,2) -- (9,9) -- (7,9) -- (7,7) -- (2,7) -- cycle;
    \draw[ultra thick, teal] (2,2) -- (9,2) -- (9,9) -- (7,9) -- (7,7) -- (2,7) -- cycle;

    \draw[fill=teal,opacity=0.2] (3,3) -- (4,3) -- (4,4) -- (11,4) -- (11,11) -- (10,11) -- (10,10) -- (6,10) -- (6,6) -- (3,6) -- cycle;
    \draw[ultra thick, teal] (3,3) -- (4,3) -- (4,4) -- (11,4) -- (11,11) -- (10,11) -- (10,10) -- (6,10) -- (6,6) -- (3,6) -- cycle;

    \draw[fill=teal,opacity=0.2] (5,5) -- (12,5) -- (12,12) -- (8,12) -- (8,8) -- (5,8) -- cycle;
    \draw[ultra thick, teal] (5,5) -- (12,5) -- (12,12) -- (8,12) -- (8,8) -- (5,8) -- cycle; 

    \def\myLoners{1}
    \def\myUppers{2,3,5}
    \def\myDowners{9,11,12}
    \def\myOvers{6,7,8,10}
    \def\myUnders{4}

    \foreach \i in \myLoners {
        \draw[fill = magenta,opacity=0.6] (\i,\i) circle (0.2cm);
    }
    \foreach \i in \myUppers {
        \draw[line width=1mm,magenta,opacity=0.7] (\i,\i.5) -- (\i,\i) -- (\i.5,\i);
    }
    \foreach \i in \myDowners {
        \draw[line width=1mm,magenta,opacity=0.7] (\i.5-1,\i) -- (\i,\i) -- (\i,\i.5-1);
    }
    \foreach \i in \myOvers {
        \draw[line width=1mm,magenta,opacity=0.7] (\i.5-1,\i) -- (\i,\i) -- (\i,\i.5);
    }    
    \foreach \i in \myUnders {
        \draw[line width=1mm,magenta,opacity=0.7] (\i,\i.5-1) -- (\i,\i) -- (\i.5,\i);
    }      

    \foreach [count=\i from 0] \j in \myLabels {
        \draw[fill=black] ({\i+1},{\myPerm[\i]}) circle (0.1cm);%
        \node [yshift=-.2cm,xshift=.2cm] at ({\i+1},{\myPerm[\i]}) {$\j$};
    }
\end{tikzpicture}
  \caption{Cycle diagram for the permutation $\sigma=1~7~6~3~8~10~9~12~2~11~4~5$.  Each node $(i,\sigma(i))$ is labeled with $p_i$, the number of cycles it is nested within in the cycle diagram.  From this diagram we see that the corresponding word-weight pair for this $\sigma$ is $(\mathbf{w},\mathbf{p})=((b,u,u,r,u,b,b,b,d,b,d,d),(0,0,1,1,0,0,1,0,0,1,0,0)).$}\label{fig: CycleDiagramEx}
\end{figure}

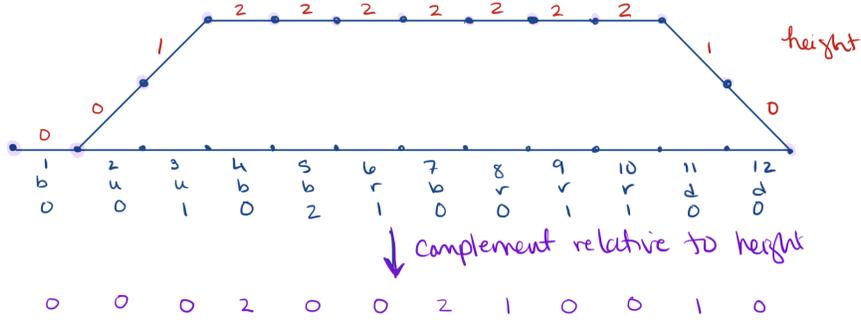
\begin{figure}[htbp]
  \centering
\begin{tikzpicture}[scale=.75]
 \draw (-0.5,-1) node foreach \x in {1,2,3,...,12} at (\x - 0.5,-1) {\x};
\draw (-0.5, -2) node {\textbf{w} =};
\draw ( .5, -2) node {( $b$};
\draw ( 1.5, -2) node {$u$};
\draw ( 2.5, -2) node {$u$};
\draw ( 3.5, -2) node {$r$};
\draw ( 4.5, -2) node {$u$};
\draw ( 5.5, -2) node {$b$};
\draw ( 6.5, -2) node {$b$};
\draw ( 7.5, -2) node {$b$};
\draw ( 8.5, -2) node {$d$};
\draw ( 9.5, -2) node {$b$};
\draw ( 10.5, -2) node {$d$};
\draw ( 11.5, -2) node {$d$ )};
\draw (-0.5, -3) node {\textbf{p} =};
\draw ( .5, -3) node {( 0};
\draw ( 1.5, -3) node {0};
\draw ( 2.5, -3) node {1};
\draw ( 3.5, -3) node {1};
\draw ( 4.5, -3) node {0};
\draw ( 5.5, -3) node {0};
\draw ( 6.5, -3) node {1};
\draw ( 7.5, -3) node {0};
\draw ( 8.5, -3) node {0};
\draw ( 9.5, -3) node {1};
\draw ( 10.5, -3) node {0};
\draw ( 11.5, -3) node {0 )};
\draw[violet] ( 0.5, 0.5) node {0};
\draw[violet] ( 1.25, .8) node {0};
\draw[violet] ( 2.25, 1.8) node {1};
\draw[violet] ( 3.5, 2.5) node {2};
\draw[violet] ( 4.25, 2.8) node {2};
\draw[violet] ( 5.5, 3.5) node {3};
\draw[violet] (6.5, 3.5) node {3};
\draw[violet] ( 7.5, 3.5) node {3};
\draw[violet] ( 8.75, 2.8) node {2};
\draw[violet] ( 9.5, 2.5) node {2};
\draw[violet] ( 10.75, 1.8) node {1};
\draw[violet] ( 11.75, .8) node {0};
\draw[violet] ( 11.5, 3) node {height};
 \fill[black] (0,0) circle (0.2cm) {};
\fill[black] (1,0) circle (0.2cm) {};
\fill[black] (2,1) circle (0.2cm) {};
 \fill[black] (3,2) circle (0.2cm) {};
  \fill[black] (4,2) circle (0.2cm) {};
\fill[black] (5,3) circle (0.2cm) {};
\fill[black] (6,3) circle (0.2cm) {};
\fill[black] (7,3) circle (0.2cm) {};
\fill[black] (8,3) circle (0.2cm) {};
 \fill[black] (9,2) circle (0.2cm) {};
 \fill[black] (10,2) circle (0.2cm) {};
  \fill[black] (11,1) circle (0.2cm) {};
 \fill[black] (12,0) circle (0.2cm) {};
 \draw[black, very thick] (-1, 0) -- (13, 0);
  \draw[blue, very thick] (0, 0) --  (1, 0) -- (2, 1)  -- (3, 2) -- (4, 2)  -- (5, 3) -- (6, 3) -- (7, 3)  -- (8, 3) -- (9, 2) -- (10, 2) --(11, 1)  -- (12, 0);
  \end{tikzpicture}
 \caption{Motzkin path for the cycle diagram given in Figure \ref{fig: CycleDiagramEx} The complement of the colored Motzkin path is $(\mathbf{w},\mathbf{\bar{p}})=((b, u, u, r, u, b, b, b, d, b, d, d),(0, 0, 0, 0, 2, 3, 2, 3, 2, 1, 1, 0)),$ which results in the permutation $1~10~12~2~7~6~9~8~5~11~4~3$.}
  \label{fig: MotzkinPathEx}
\end{figure}

The second involution which we show has $2^{n-1}$ fixed points is the invert Laguerre heap map (Definition~\ref{def: LaguerreHeap}). One way to define the invert Laguerre heap map is to associate a heap of pieces, as in \cite{Viennot_heaps}, to a given permutation by considering each decreasing run as one piece, beginning with the leftmost run. Two pieces commute if and only if the minimal element of one piece is larger than the maximal element of the other piece.  The invert Laguerre heap map returns the permutation corresponding to the heap obtained by reversing the reading direction of the heap. We use an equivalent definition in terms of non-crossing arcs, as described  in~\cite{reading2015noncrossing}.

\begin{definition}\label{def: LaguerreHeap}
  Given a permutation $\sigma=\sigma_1\sigma_2\cdots\sigma_n$,
  plot a point labeled $\sigma_i$ at $(i,\sigma_i)$, for each $1\leq i\leq n$. For each decreasing run with more than one element in it, draw a line between each pair of adjacent numbers within the decreasing run. Now move all the points to be aligned vertically, converting the lines connecting numbers into non-crossing arcs, and keeping all numbers on the correct side of the arcs. The \textbf{invert Laguerre heap map} (Map 241) proceeds by vertically reflecting this arc diagram and reconstructing the permutation by applying the backward map.
\end{definition}

\begin{example}
  Consider $n = 12$ and $\sigma=1~10~12~2~7~6~9~8~5~11~4~3$. 
  Applying the invert Laguerre heap map to $\sigma$ results in $1~11~4~3~9~8~5~7~6~12~2~10$ (see Figure \ref{fig: LaguerreHeap}). Since both the invert Laguerre heap and Corteel maps are involutions, comparing the output of the invert Laguerre heap map in this example to the starting permutation in Figure \ref{fig: CycleDiagramEx} shows these maps are not the same.

  Note that  applying the invert Laguerre heap map is not equivalent to recording the decreasing runs from right-to-left (preserving the order in each run), even though some examples may lead one to believe this.
\end{example}

\begin{figure}
  \begin{minipage}[c]{4.5cm}
\begin{tikzpicture}[scale=.35]
 \def\myPerm{{1,10,12,2,7,6,9,8,5,11,4,3}}
    \def\myLabels{1,10,12,2,7,6,9,8,5,11,4,3}
   
    \foreach \i in {1,...,12} {
        \draw [thin,gray] (\i,1) -- (\i,12);
    }
    \foreach \i in {1,...,12} {
        \draw [thin,gray] (1,\i) -- (12,\i);
    }
   
    \draw[very thick, Green] (3,12) -- (4,2);
    \draw[very thick, Purple] (5,7) -- (6,6);
    \draw[very thick, Cyan] (7,9) -- (8,8);
    \draw[very thick, MidnightBlue] (8,8) -- (9,5);
    \draw[very thick, OrangeRed] (10,11) -- (11,4);
    \draw[very thick, Orange] (11,4) -- (12,3);

    \foreach [count=\i from 0] \j in \myLabels {
        \draw[fill=black] ({\i+1},{\myPerm[\i]}) circle (0.075cm);%
        \node [yshift=-.2cm,xshift=-.2cm] at ({\i+1},{\myPerm[\i]}) {\tiny $\j$};
    }
    \draw[fill=Yellow](1,1) circle(0.15cm);
    \draw[fill=teal](2,10) circle(0.15cm);
\end{tikzpicture}
  \end{minipage}
\begin{minipage}[l]{.4cm}$\rightarrow$\end{minipage}
  \begin{minipage}[c]{2cm}
\begin{tikzpicture}[scale=.35]

   \draw [thin,lightgray] (1,1) -- (1,12);

   \foreach \i in {1,2,3,4,5,8,9,11,12} {
        \draw[fill=black] (1,{\i}) circle (0.1cm);%
        \node [xshift=.2cm] at ({1},{\i}) {\tiny $\i$};
    }
    \foreach \i in {6,7,10} {
        \draw[fill=black] (1,{\i}) circle (0.1cm);%
        \node [xshift=-.25cm] at ({1},{\i}) {\tiny $\i$};
    }
    \draw[thick, Green] (1,2) to[curve through = { (0.5,3) (1,9.5) (1.5,10) (1,10.5) (0.5,11) }] (1,12);
    \draw[thick, MidnightBlue] (1,5) to[curve through = { (1.5,6.25) }] (1,8);
    \draw[thick, OrangeRed] (1,4) to[curve through = { (2.5,7.75) }] (1,11);
    \draw[very thick,Cyan] (1,8)--(1,9);
    \draw[very thick,Purple] (1,6)--(1,7);
    \draw[very thick, Orange] (1,3)--(1,4);

    \draw[fill=Yellow](1,1) circle(0.15cm);
    \draw[fill=teal](1,10) circle(0.15cm);
\end{tikzpicture}
\end{minipage}
\begin{minipage}[l]{.4cm}$\rightarrow$\end{minipage}
  \begin{minipage}[c]{2cm}
\begin{tikzpicture}[scale=.35]

   \draw [thin,lightgray] (2,1) -- (2,12);

   \foreach \i in {1,2,3,4,5,8,9,11,12} {
        \draw[fill=black] (2,{\i}) circle (0.1cm);%
        \node [xshift=-.25cm] at ({2},{\i}) {\tiny $\i$};
    }
    \foreach \i in {6,7,10} {
        \draw[fill=black] (2,{\i}) circle (0.1cm);%
        \node [xshift=.2cm] at ({2},{\i}) {\tiny $\i$};
    }
    \draw[thick, Green] (2,2) to[curve through = { (2.5,3) (2,9.5) (1.5,10) (2,10.5) (2.5,11) }] (2,12);
    \draw[thick, MidnightBlue] (2,5) to[curve through = { (1.5,6.25) }] (2,8);
    \draw[thick, OrangeRed] (2,4) to[curve through = { (0.5,7.75) }] (2,11);
    \draw[very thick,Cyan] (2,8)--(2,9);
    \draw[very thick,Purple] (2,6)--(2,7);
    \draw[very thick, Orange] (2,3)--(2,4);
    
    \draw[fill=Yellow](2,1) circle(0.15cm);
    \draw[fill=teal](2,10) circle(0.15cm);

\end{tikzpicture}
\end{minipage}
\begin{minipage}[l]{.4cm}$\rightarrow$\end{minipage}
\begin{minipage}[c]{4.5cm}
\begin{tikzpicture}[scale=.35]
 \def\myPerm{{1,11,4,3,9,8,5,7,6,12,2,10}}
    \def\myLabels{1,11,4,3,9,8,5,7,6,12,2,10}
   
    \foreach \i in {1,...,12} {
        \draw [thin,gray] (\i,1) -- (\i,12);
    }
    \foreach \i in {1,...,12} {
        \draw [thin,gray] (1,\i) -- (12,\i);
    }
   
    \draw[very thick, Green] (10,12) -- (11,2);
    \draw[very thick, Purple] (8,7) -- (9,6);
    \draw[very thick, Cyan] (5,9) -- (6,8);
    \draw[very thick, MidnightBlue] (6,8) -- (7,5);
    \draw[very thick, OrangeRed] (2,11) -- (3,4);
    \draw[very thick, Orange] (3,4) -- (4,3);

    \foreach [count=\i from 0] \j in \myLabels {
        \draw[fill=black] ({\i+1},{\myPerm[\i]}) circle (0.075cm);%
        \node [yshift=-.2cm,xshift=-.2cm] at ({\i+1},{\myPerm[\i]}) {\tiny $\j$};
    }
    \draw[fill=Yellow](1,1) circle(0.15cm);
    \draw[fill=teal](12,10) circle(0.15cm);
\end{tikzpicture}
\end{minipage}
  \caption{The invert Laguerre heap map applied to the permutation $1~10~12~2~7~6~9~8~5~11~4~3$. The resulting permutation is $1~11~4~3~9~8~5~7~6~12~2~10.$ }\label{fig: LaguerreHeap} 
\end{figure}
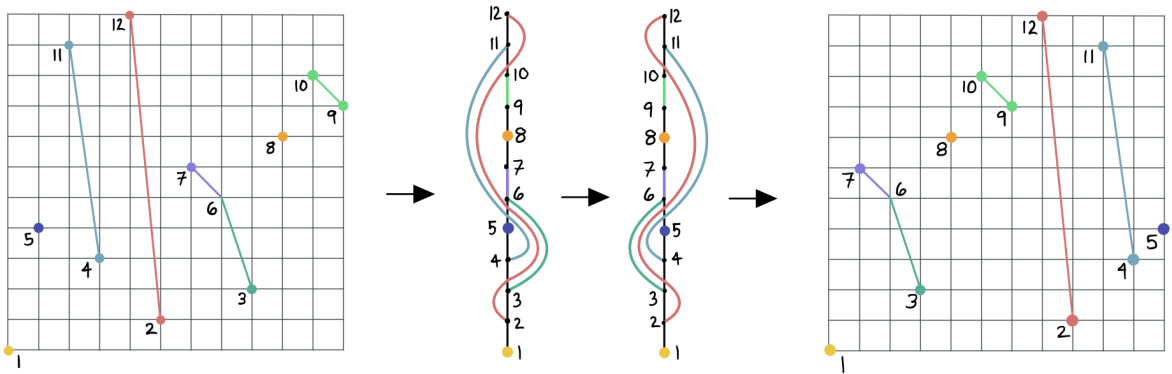

Before proving our  CSP theorems, we establish the number of fixed points of the Corteel and invert Laguerre heap maps in the following lemmas. We note that though the orbit structure of the  maps match, the orbit elements differ.  In particular, while there is a large overlap in the fixed points of each map, they are not all the same, and we thus have to prove the number of fixed points result separately.

\begin{lem}\label{lem: Corteel fixed pts}  The number of fixed points under the action of the Corteel map on $S_n$ is $2^{n-1}.$
\end{lem}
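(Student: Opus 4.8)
The plan is to pass through the Foata--Zeilberger bijection and count fixed word--weight pairs. Recall from Definition~\ref{def: Corteel} that the Corteel map sends a permutation $\sigma \leftrightarrow (\mathbf{w}, \mathbf{p})$ to the permutation with data $(\mathbf{w}, \bar{\mathbf{p}})$: it leaves the word $\mathbf{w}$ (hence the underlying Motzkin path and all the heights $h_i$) unchanged and only complements the weight vector. Since the Foata--Zeilberger bijection is a bijection between $S_n$ and valid word--weight pairs, the fixed points of the Corteel map correspond exactly to the pairs with $\mathbf{p} = \bar{\mathbf{p}}$, that is, with $p_i = \bar{p_i}$ for every $i$. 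Thus I would count, over all admissible words $\mathbf{w}$, the number of weight vectors that are their own complement; the total of these is the number of fixed points.

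First I would carry out the per-step analysis. Solving $p_i = \bar{p_i}$ using the formula for $\bar{p_i}$ gives, for a step with $w_i = r$, the equation $p_i = h_i - p_i - 1$, i.e.\ $p_i = (h_i-1)/2$, which has a unique admissible solution precisely when $h_i$ is odd; and for a step with $w_i \in \{u, d, b\}$, the equation $p_i = h_i - p_i$, i.e.\ $p_i = h_i/2$, which has a unique admissible solution precisely when $h_i$ is even. Consequently a word $\mathbf{w}$ contributes exactly one self-complementary weight vector if every $r$-step sits at odd height and every $u$-, $d$-, $b$-step sits at even height, and contributes none otherwise. The count of fixed points is therefore the number of admissible words meeting these height-parity constraints.

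The last and most delicate step is to see that these parity constraints pin the path down completely. Writing $\ell_i$ for the height after $i$ steps (so $\ell_0 = \ell_n = 0$) and recalling $h_i = \min(\ell_{i-1}, \ell_i)$, the constraints translate into local transition rules: from an even height one may only take $u$ (to the next odd height) or $b$ (staying even), and from an odd height one may only take $d$ (to the next even height) or $r$ (staying odd). Since an up-step is only available at even heights and a down-step only at odd heights, the walk can never climb above height $1$: the heights are confined to $\{0,1\}$, with exactly two admissible choices at each step regardless of the current height. Hence there are $2^n$ such words in total, and a symmetric transfer-matrix recursion (the numbers of length-$k$ words ending at height $0$ and ending at height $1$ satisfy the same recurrence) shows exactly half of them return to height $0$; this yields $2^{n-1}$ admissible fixed-point words, and therefore $2^{n-1}$ fixed permutations. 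I expect the height-confinement observation to be the main obstacle: verifying that the parity conditions force the path into the two lowest levels, and checking that the prescribed weight always lies in the admissible range (including the special range $\{0,\dots,h_i-1\}$ for $r$-steps), is where the argument could most easily go wrong.
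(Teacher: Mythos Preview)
Your argument is correct and follows essentially the same route as the paper: both pass through the Foata--Zeilberger bijection, use the parity conditions $p_i=(h_i-1)/2$ for $r$-steps and $p_i=h_i/2$ otherwise to force the path into heights $\{0,1\}$, and then count the resulting two-choice words. The only cosmetic difference is the final count---the paper observes that the last letter is forced by the requirement $\ell_n=0$ (giving $2^{n-1}$ directly), whereas you invoke a transfer-matrix symmetry to get the same number; both are immediate once the height confinement is in hand.
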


\begin{proof}
  By definition, a fixed point of the Corteel map corresponds to a colored Motzkin path which is equal to its complement relative to the height of the path.
  Recall that the complement of $(\mathbf{w},\mathbf{p})$ is the colored Motzkin path corresponding to $(\mathbf{w},\mathbf{\bar{p}})$ where $\bar{p_i}=h_i-p_i-1$ if $w_i=r$ and $h_i-p_i$ otherwise.
  Thus, if the colored Motzkin path is fixed under the complement (i.e. $\mathbf{\bar{p}}=\mathbf{p}$) it must be that $h_i-1=2p_i$ when $w_i=r$ and $h_i=2p_i$ otherwise.  Suppose $h_i=1.$  Since fractional values for $p_i$ are not allowed, it follows $w_i=r$ and the left endpoint of the $(i+1)$-th step has height $1.$  By definition of the height function, $h_{i+1}$ must be less than or equal to 1.  As the height at each step can only change by $0$ or $\pm 1$, $h_1=0,$ and we just showed that if $h_i=1$ then $h_{i+1}\leq 1$, it follows that at each step $h_i=0$ (and $w_i\in \{u,b,d\}$) or $h_i=1$ (and $w_i=r$), and in either case $p_i=\bar{p_i}=0.$

  The only valid cycle diagrams satisfying these height conditions are those corresponding to words $\mathbf{w}$ which start with the letter $u$ or $b$, end with the letter $b$ or $d$, have an equal number of $u$'s and $d$'s, and for all $i>1$ satisfy the conditions:
  \begin{itemize}
    \item $w_i\in \{u,b\}$ if and only if $w_{i-1}\in\{b,d\},$ and
    \item $w_i\in\{r,d\}$ if and only if $w_{i-1}\in\{r,u\}.$
  \end{itemize}
  These conditions completely determine the last letter of the word $\mathbf{w}$, but for each other letter we can choose between two possible values ($\{u,b\}$ or $\{r,d\}$) depending on what letter precedes it.  Thus there are $2^{n-1}$ possible words, and hence $2^{n-1}$ colored Motzkin paths fixed under the complement, and $2^{n-1}$ permutations fixed by the action of the Corteel map.
\end{proof}

From the preceding proof it follows that the permutations fixed by the Corteel map correspond to cycle diagrams made up of disjoint squares ($ud$ pairs), stairs ($urr\cdots rd$), and fixed points ($b$).  See Figure \ref{fig: SquaresStairs}.

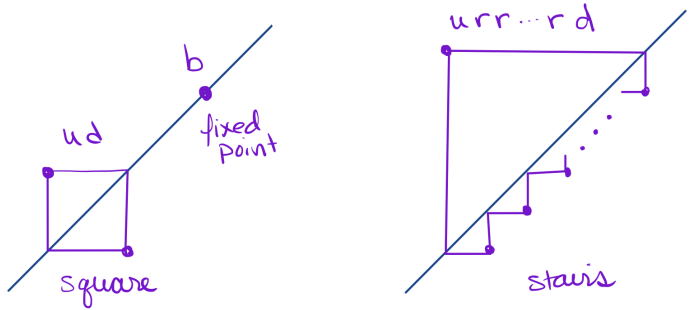
\begin{figure}[htbp]
  \centering
  \begin{minipage}{0.4\linewidth}
            \centering
            \begin{tikzpicture}
                \draw (0,0) to (4,4); 
                \draw (1,1) to (1,2);
                \draw (2,2) to (1,2);
                \draw (1.5,2) node[above]{$ud$};
                \draw (1,1) to (2,1);
                \draw (2,2) to (2,1);
                \node at (1,2) {$\bullet$};
                \node at (2,1) {$\bullet$};
                \draw (3,3) node {$\bullet$} ;
                \draw (3,3) node [above] {$b$} ;
                \draw (3.2,3) node [right] {fixed point};
                \draw (1.5,0.5) node {square};
            \end{tikzpicture}
        \end{minipage}
        \qquad
        \begin{minipage}{0.5\linewidth}
            \centering
            \begin{tikzpicture}[scale=0.7]
                \draw (0,0) to (8,8); 
                \draw (1,1)--(2,1)--(2,2)--(3,2) -- (3,3) -- (4,3) --(4,3.5);
                \draw[dashed] (4.3,3.8) -- (5.7, 5.2);
                \draw (6, 5.5) --(6,6) -- (7,6) -- (7,7) -- (1,7) -- (1,1);
                \node at (5,2) {stairs};
                \node at (1,7) {$\bullet$};
                \node at (2,1) {$\bullet$};
                \node at (3,2) {$\bullet$};
                \node at (4,3) {$\bullet$};
                \node at (7,6) {$\bullet$};
                \draw (4,7) node[above]{$urr\ldots rd$};
            \end{tikzpicture}
        \end{minipage}
  \caption{Permutations fixed by the Corteel map correspond to cycle diagrams made up of disjoint squares, stairs, or fixed points along the diagonal.}
  \label{fig: SquaresStairs}
\end{figure}

\begin{lem}\label{lem: Laguerre fixed pts}  The number of fixed points under the action of the invert Laguerre heap map on $S_n$ is $2^{n-1}.$
\end{lem}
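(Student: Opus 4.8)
The plan is to translate the fixed-point condition into a statement about noncrossing arc diagrams and then count the symmetric ones. By Definition~\ref{def: LaguerreHeap}, the invert Laguerre heap map sends $\sigma$ to $\Phi^{-1}(\overline{\Phi(\sigma)})$, where $\Phi$ is Reading's bijection \cite{reading2015noncrossing} sending a permutation to its noncrossing arc diagram (obtained by drawing arcs within the decreasing runs and aligning the points vertically) and $\overline{(\cdot)}$ denotes the left--right reflection of a diagram across the vertical line of nodes. Since $\Phi$ is a bijection, applying $\Phi$ to the identity $\Phi^{-1}(\overline{\Phi(\sigma)}) = \sigma$ shows that $\sigma$ is a fixed point if and only if its arc diagram $\Phi(\sigma)$ is invariant under reflection. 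Thus it suffices to count the reflection-invariant noncrossing arc diagrams on $n$ nodes.

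Next I would analyze which arcs can appear in a reflection-invariant diagram. Label the nodes $1,\ldots,n$ by height. An arc with endpoints $a < b$ is determined by, for each intermediate node $a < c < b$, whether the arc passes to its left or to its right, and reflection swaps all of these choices. Hence an arc is fixed by the reflection exactly when it has no intermediate nodes, that is, when $b = a+1$; I will call these \emph{short arcs}. If a reflection-invariant diagram contained an arc $\alpha$ with $b > a+1$, then $\overline{\alpha} \ne \alpha$ would also have to lie in the diagram; but $\alpha$ and $\overline{\alpha}$ share both endpoints and differ on the side of some intermediate node, so they necessarily cross, contradicting the noncrossing condition. Therefore a reflection-invariant diagram consists solely of short arcs, each of which is automatically reflection-invariant.

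I would then verify that every set of short arcs is a valid noncrossing arc diagram, so that these are \emph{exactly} the reflection-invariant diagrams. The short arcs are $(1,2),(2,3),\ldots,(n-1,n)$; distinct short arcs occupy distinct gaps between consecutive heights and hence never cross, while two of them meet at most in a single shared node, where one ends and the next begins, which is permitted. Consequently the reflection-invariant diagrams are in bijection with the subsets of the $n-1$ short arcs, giving exactly $2^{n-1}$ of them, and therefore $2^{n-1}$ fixed points of the map. Equivalently, these fixed permutations are precisely those whose decreasing runs are blocks of consecutive integers, and these correspond to the $2^{n-1}$ compositions of $n$.

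The crux of the argument—and the step I expect to require the most care—is the characterization of reflection-invariant diagrams: specifically the claim that two distinct arcs sharing both endpoints must cross (forcing the absence of long arcs), together with the verification that \emph{arbitrary} sets of short arcs satisfy the validity conditions of \cite{reading2015noncrossing} (so that the count is exactly $2^{n-1}$ rather than smaller). Confirming that the ``backward map'' in Definition~\ref{def: LaguerreHeap} is genuinely $\Phi^{-1}$, and that reflection preserves validity of diagrams, are the remaining points to pin down.
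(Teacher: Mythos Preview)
Your proposal is correct and follows essentially the same approach as the paper's proof: both identify the fixed points with reflection-invariant noncrossing arc diagrams, observe that these are exactly the diagrams whose arcs all connect adjacent nodes, and count the $2^{n-1}$ subsets of such short arcs. The paper's proof is a three-sentence sketch that asserts the characterization without justification, whereas you supply the missing details (why long arcs are excluded, and why every set of short arcs is admissible); in particular, your crossing argument for excluding long arcs is valid, though you could alternatively note that in Reading's setup an arc and its nontrivial reflection already share a lower endpoint, which is itself forbidden.
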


\begin{proof}
  The invert Laguerre heap map corresponds to vertically flipping the arc diagrams of \cite{reading2015noncrossing}. The arc diagrams that are fixed under this flip are the ones whose only arcs are between adjacent numbers. Each such arc can either be there or not, so the total number is $2^{n-1}$.
\end{proof}

In each of the following subsections we prove a given statistic exhibits the CSP.  We are able to verify futher instances of the CSP along the way by citing or proving equidistribution results.  In total, we establish the CSP for $13$ statistics, and conjecture it for two more, with respect to involutions having $2^{n-1}$ fixed points. 

\subsection{The number of crossings}\label{subsec: CorteelCSP}  In this subsection we prove the CSP for the number of crossings directly, and then extend this result to the number of nestings and the number of occurrences of $13-2$ and $31-2$ patterns.  
In addition to the statistics defined here, recall consecutive patterns defined in Definition~\ref{def:patterns}.
\begin{definition}\label{def:crossings_nestings}
  A \textbf{crossing} of a permutation $\sigma\in S_n$ is a pair $(i,j)$
  such that either $i<j\leq \sigma(i)< \sigma(j)$ or $\sigma(i)<\sigma(j)<i<j$.
  Using the cycle diagram described in Definition~\ref{def: Corteel}, the number of crossings of the permutation is the number of crossings of arcs in the diagram plus the number of times a line above the $y=x$ diagonal bounces off it.

  A \textbf{nesting} of $\sigma$ is a pair $(i,j)$ such that $j < i \leq  \sigma(i) < \sigma(j)$ or $\sigma(j) < \sigma(i) < i < j$. In the cycle diagram, these are a pair of nested arcs above or below the $y=x$ diagonal, including the nesting of fixed points with respect to arcs above the diagonal. The number of nestings is the sum of the nesting numbers $p_i$ from Definition~\ref{def: Corteel}.
\end{definition}

\begin{thm}\label{thm: Corteel-crossings CSP}
  The number of crossings of a permutation (Statistic $39$) exhibits the cyclic sieving phenomenon with respect to involutions having $2^{n-1}$ fixed points.
\end{thm}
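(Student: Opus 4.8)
The plan is to invoke Stembridge's $q=-1$ phenomenon to turn this order-$2$ CSP into a single root-of-unity evaluation, and then to compute that evaluation by pushing the crossing generating function through the Foata--Zeilberger encoding. Since any involution with $2^{n-1}$ fixed points generates an order-$2$ action, its only relevant powers are the identity and the map itself, and $\zeta=-1$; thus the CSP is equivalent to the two equalities $f(1)=n!$ and $f(-1)=2^{n-1}$, where $f(q)=\sum_{\sigma\in S_n}q^{\mathrm{cr}(\sigma)}$. The first is immediate from the definition of a statistic generating function, and the second is exactly the fixed-point count of \Cref{lem: Corteel fixed pts,lem: Laguerre fixed pts}. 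So everything reduces to showing $f(-1)=2^{n-1}$.

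To obtain a usable closed form for $f$, I would first replace crossings by nestings: the Corteel map interchanges the two statistics (\Cref{def: Corteel}), so they are equidistributed and $f(q)=\sum_{\sigma}q^{\mathrm{ne}(\sigma)}$, where $\mathrm{ne}(\sigma)=\sum_i p_i$ by \Cref{def:crossings_nestings}. Next I would group permutations by their underlying Motzkin word $\mathbf{w}\in\{u,d,r,b\}^n$. For a fixed $\mathbf{w}$, the weight $p_i$ ranges over $\{0,1,\dots,h_i\}$ when $w_i\in\{u,b,d\}$ and over $\{0,1,\dots,h_i-1\}$ when $w_i=r$; these ranges are forced by requiring the complement $\bar p_i$ of \Cref{def: Corteel} to remain a valid weight. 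Summing $q^{p_i}$ over each independent coordinate then yields the product form
\[
 f(q)=\sum_{\mathbf{w}}\prod_{i=1}^{n}c_i(q),\qquad
 c_i(q)=\begin{cases}
 1+q+\cdots+q^{h_i} & w_i\in\{u,b,d\},\\
 1+q+\cdots+q^{h_i-1} & w_i=r,
 \end{cases}
\]
where the sum is over Motzkin words and $h_i$ is the height of \Cref{def: Corteel}. As a check, $c_i(1)$ is the number of weight choices at step $i$, and summing the products recovers $f(1)=n!$ via the Foata--Zeilberger bijection.

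The evaluation at $q=-1$ is then clean. A truncated sum $1+q+\cdots+q^{m-1}$ equals $1$ at $q=-1$ when $m$ is odd and $0$ when $m$ is even, so each product $\prod_i c_i(-1)$ is either $0$ or $1$, equalling $1$ exactly when $h_i$ is even at every $u$-, $b$-, and $d$-step and odd at every $r$-step. Hence $f(-1)$ counts the Motzkin words obeying these parity constraints. I would then show by induction that the constraints confine every height to $\{0,1\}$: the path starts at height $0$, at a height-$0$ (even) vertex the parity condition permits only $u$ or $b$ (an $r$ would need odd height, and $d$ is not a path there), leading to height $1$ or $0$, while at a height-$1$ (odd) vertex it permits only $d$ or $r$ (a $u$ or $b$ would need even height), leading to height $0$ or $1$. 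Thus heights never exceed $1$, the word is determined entirely by its height profile $(\mathrm{ht}_0,\dots,\mathrm{ht}_n)\in\{0,1\}^{n+1}$ with $\mathrm{ht}_0=\mathrm{ht}_n=0$, and there are exactly $2^{n-1}$ of these. These are precisely the square/stair/fixed-point words identified after \Cref{lem: Corteel fixed pts}, so $f(-1)=2^{n-1}$.

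The only genuinely delicate point is the bookkeeping in the second paragraph: pinning down the exact weight ranges in the Foata--Zeilberger encoding and making the passage from crossings to nestings to path weights rigorous. Once the product form is in hand, the $q=-1$ specialization and the parity-forces-height-into-$\{0,1\}$ argument are routine, and the fact that the surviving words coincide with the already-classified fixed points is what makes the two sides of the CSP match.
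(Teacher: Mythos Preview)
Your argument is correct and takes a genuinely different route from the paper's own proof. The paper proceeds by pure citation: it quotes the closed form $\sum_{k=1}^n\hat E_{k,n}(q)$ for the crossing generating function from \cite{corteel2007crossings} and then invokes \cite[Proposition~5.7]{LW2005}, which gives $\hat E_{k,n}(-1)=\binom{n-1}{k-1}$, so summing over $k$ yields $2^{n-1}$. No combinatorics happens inside the proof itself.

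Your approach, by contrast, is a self-contained combinatorial evaluation. You push $f$ through the Foata--Zeilberger encoding to write it as a sum over Motzkin words of the independent weight products $\prod_i c_i(q)$, and then the $q=-1$ specialisation kills every word except those whose step-height parities are forced, which you correctly identify as the height-$\{0,1\}$ words---precisely the square/stair/fixed-point words enumerated after \Cref{lem: Corteel fixed pts}. This is pleasing because it makes the equality $f(-1)=\#\{\text{Corteel fixed points}\}$ transparent rather than a numerical coincidence: both sides are counting the \emph{same} Motzkin words. The cost is that you must pin down the exact weight ranges $p_i\in\{0,\dots,h_i\}$ for $w_i\in\{u,b,d\}$ and $p_i\in\{0,\dots,h_i-1\}$ for $w_i=r$, which you correctly flag as the delicate bookkeeping step; once granted (and it follows from the complement formula in \Cref{def: Corteel} being an involution on valid labels), the rest is routine. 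The paper's proof is shorter but opaque; yours is longer but explains \emph{why} the answer is $2^{n-1}$.
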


\begin{proof}
  By \cite[Theorem 2]{corteel2007crossings}, the generating function for the number of crossings statistic is given by:
  \[\sum_{k=1}^n\hat{E}_{k,n}(q)\] where
  \[\hat{E}_{k,n}(q)=q^{k-k^2}\sum_{i=0}^{k-1}(-1)^i[k-i]_q^n q^{k(i-1)}\left(\binom{n}{i}q^{k-i}+\binom{n}{i-1}\right).
  \]
  By \cite[Proposition 5.7]{LW2005}, $\hat{E}_{k,n}(-1)=\binom{n-1}{k-1}$, thus $\sum_{k=1}^n\hat{E}_{k,n}(-1)=\sum_{k=1}^n\binom{n-1}{k-1}=2^{n-1}$.
\end{proof}

\begin{cor}\label{cor: Corteel-crossings CSP}
  The following statistics exhibit the CSP under involutions with $2^{n-1}$ fixed points.
  \begin{itemize}
    \item Statistic $223$: The number of nestings,
    \item Statistic $356$: The number of occurrences of the pattern $13-2$,
    \item Statistic $358$: The number of occurrences of the pattern $31-2$.
  \end{itemize}
\end{cor}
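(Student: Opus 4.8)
\section*{Proof proposal}

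The plan is to deduce the corollary directly from Theorem~\ref{thm: Corteel-crossings CSP} by showing that each of the three statistics is equidistributed with the number of crossings (Statistic~$39$). As recorded in Subsection~\ref{sec:bg_CSP}, two statistics with the same generating function exhibit the CSP for exactly the same orbit structures, so once equidistribution with crossings is established the conclusion transfers for free; in fact, since these maps are involutions, it would even suffice to check that the generating function of each statistic evaluated at $-1$ equals $2^{n-1}$, which is what Theorem~\ref{thm: Corteel-crossings CSP} provides for crossings.

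For the number of nestings (Statistic~$223$), equidistribution with crossings is immediate from the very construction of the Corteel map, which was built in~\cite{corteel2007crossings} to interchange the numbers of crossings and nestings. Concretely, for every $\sigma \in S_n$ one has $\mathrm{nest}(\mathrm{Corteel}(\sigma)) = \mathrm{cr}(\sigma)$, so reindexing the sum by the bijection $\tau = \mathrm{Corteel}(\sigma)$ gives $\sum_{\sigma \in S_n} q^{\mathrm{cr}(\sigma)} = \sum_{\sigma \in S_n} q^{\mathrm{nest}(\mathrm{Corteel}(\sigma))} = \sum_{\tau \in S_n} q^{\mathrm{nest}(\tau)}$. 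Hence crossings and nestings share a generating function and the CSP passes to nestings.

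For the two vincular patterns $13-2$ (Statistic~$356$) and $31-2$ (Statistic~$358$), the task is again to show each is equidistributed with the number of crossings (equivalently, with nestings). I would establish this by invoking the known equidistributions relating Corteel's crossing and nesting statistics to occurrences of generalized patterns, as developed in~\cite{corteel2007crossings} and reflected in the shared FindStat distribution class; absent a direct citation in exactly this form, I would instead build the bijection through the Foata--Zeilberger correspondence underlying the Corteel map, translating a $31-2$ (respectively $13-2$) occurrence in one-line notation into a crossing (respectively nesting) of arcs in the cycle diagram, or else match the Flajolet-type continued-fraction generating functions of the two families.

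The main obstacle is precisely this last step. Unlike the nesting case, the equidistribution with the vincular patterns is \emph{not} an equality statistic-by-statistic: already in $S_3$ the permutation $231$ has one crossing but no occurrence of $31-2$, while $312$ has no crossing but one occurrence of $31-2$, even though both families have generating function $5+q$. The difficulty is that the crossing statistic is read off the arcs $i \mapsto \sigma(i)$ of the cycle diagram, whereas $13-2$ and $31-2$ are read off the one-line notation, so reconciling the two descriptions---by a carefully verified bijection or by matching continued fractions---is where the genuine work lies.
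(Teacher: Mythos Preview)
Your overall strategy---reduce to Theorem~\ref{thm: Corteel-crossings CSP} by proving equidistribution with crossings---is exactly the paper's, and your treatment of nestings is fine (the paper cites \cite[Proposition~4]{corteel2007crossings}, which is the statement underlying the Corteel bijection you invoke).

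The gap is precisely where you flag it: for the vincular patterns you never actually establish equidistribution with crossings, you only list possible avenues. The paper closes this gap cleanly via a literature citation you are missing: \cite[Corollary~30]{SteingrimssonWilliams2007} shows that the number of permutations of $[n]$ with $k-1$ descents and $m$ occurrences of the pattern $2-31$ is the coefficient of $q^m$ in $\hat{E}_{k,n}(q)$, so summing over $k$ gives that the generating function for $2-31$ equals $\sum_{k=1}^n \hat{E}_{k,n}(q)$, the same generating function used for crossings in Theorem~\ref{thm: Corteel-crossings CSP}. From there one finishes by elementary symmetry: reverse sends $2-31$ to $13-2$ (giving Statistic~356), and complement sends $13-2$ to $31-2$ (giving Statistic~358). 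No new bijection or continued-fraction computation is needed; the Steingr\'imsson--Williams result is the missing ingredient.
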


\begin{proof}
  By \cite[Proposition 4]{corteel2007crossings}, the number of nestings of a permutation has the same generating function as the number of crossings, so the CSP holds for this statistic as well.

  By \cite[Corollary 30]{SteingrimssonWilliams2007}, the number of permutations of $[n]$ with $k-1$ descents and $m$ occurrences of the pattern $2-31$ is given by the coefficient of $q^m$ in $\hat{E}_{k,n}(q)$ (from the proof of Theorem~\ref{thm: Corteel-crossings CSP}). So the generating function for occurrences of the pattern $2-31$ is $\sum_{k=1}^{n}\hat{E}_{k,n}(q)$, which is the generating function for the number of crossings given in the proof of Theorem~\ref{thm: Corteel-crossings CSP}. Applying the reverse map sends occurrences of the pattern $2-31$ to occurrences of the pattern $13-2$, so we have that the number of occurrences of the pattern $13-2$ is equidistributed with the number of crossings. Thus the desired CSP holds.

  Finally, $13-2$ patterns and $31-2$ patterns are related by the complement map and so are equidistributed. Thus we have the CSP for all three statistics follows from the CSP of the number of crossings.
\end{proof}

Note that FindStat indicates that the number of occurrences of the pattern $31-2$ and the number of crossings may be related by the Clark-Steingrimsson-Zeng map (Map 238) \cite{ClarkeSteingrimssonZeng} followed by the inverse map. It may be interesting to prove the equidistribution of the number of $31-2$ patterns and crossings directly via these maps.

\subsection{The cycle descent number}\label{subsec:Corteel descent}
We first establish the CSP for the cycle descent number with any involution having $2^{n-1}$ fixed points. We then prove $\arrowpattern$ arrow patterns are equidistributed with the cycle descent number, and thus also exhibit the CSP. 

\begin{definition}\label{def: cycle descent} A \textbf{cycle descent} of a permutation $\pi$ is a descent within a cycle of $\pi$ when $\pi$ is written in cycle notation with each cycle starting with its smallest element.
  The \textbf{cycle descent number}, denoted $\cdes(\pi)$, counts the number of cycle descents in $\pi.$
\end{definition}
Note that a cycle must have at least length $3$ to have a cycle descent; this is because the first number in a cycle is the smallest, so it does not form a cycle descent with the next number in the cycle.

\begin{thm}\label{thm:cycle_des_CSP}
  The cycle descent number (Statistic $317$) exhibits the CSP with respect to involutions having $2^{n-1}$ fixed points.
\end{thm}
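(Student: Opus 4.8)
The plan is to exploit the fact that, since the underlying map is an involution of order $2$, cyclic sieving reduces to Stembridge's $q=-1$ phenomenon. As $f(1)=n!$ holds automatically, it suffices to show that the cycle descent generating function $f_n(q)=\sum_{\sigma\in S_n}q^{\cdes(\sigma)}$ satisfies $f_n(-1)=2^{n-1}$, which is exactly the fixed-point count established in Lemma~\ref{lem: Corteel fixed pts} (equivalently Lemma~\ref{lem: Laguerre fixed pts}).

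The key structural observation is that $\cdes$ is additive over the cycles of $\sigma$, so $f_n(q)$ is governed by the exponential formula. First I would compute the per-cycle contribution. Writing a cycle of length $m$ starting from its minimum produces a word $a_1a_2\cdots a_m$ with $a_1$ the smallest entry; since $a_1<a_2$ the first position is never a descent, so the cycle descents are precisely the descents of the arbitrary arrangement $a_2\cdots a_m$ of the remaining $m-1$ entries. After standardization this identifies the per-cycle generating polynomial with the Eulerian polynomial $A_{m-1}(q)=\sum_{\pi\in S_{m-1}}q^{\des(\pi)}$. The exponential formula then gives
\[\sum_{n\ge 0}f_n(q)\frac{x^n}{n!}=\exp\!\left(\sum_{m\ge1}A_{m-1}(q)\frac{x^m}{m!}\right).\]

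To evaluate this, I would differentiate the inner sum and invoke the classical exponential generating function of the Eulerian polynomials,
\[\frac{d}{dx}\sum_{m\ge1}A_{m-1}(q)\frac{x^m}{m!}=\sum_{j\ge0}A_j(q)\frac{x^j}{j!}=\frac{q-1}{q-e^{(q-1)x}},\]
integrating from $0$ and using that the inner sum vanishes at $x=0$. Specialization at $q=-1$ is where the computation becomes transparent: the integrand collapses to $\frac{-2}{-1-e^{-2x}}=1+\tanh x$, whose antiderivative is $x+\ln\cosh x$. Hence at $q=-1$ the full generating function becomes $\exp(x+\ln\cosh x)=e^x\cosh x=\tfrac{1}{2}(e^{2x}+1)$. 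Extracting the coefficient of $x^n/n!$ yields $f_n(-1)=2^{n-1}$ for every $n\ge 1$, which completes the proof.

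I expect the main obstacle to be the clean justification of the per-cycle Eulerian contribution—specifically, verifying that reading a cycle from its minimum produces exactly a $\des$-distributed word of length $m-1$—together with correctly handling the constant of integration when passing from the derivative identity to the input of the exponential formula. Should the generating-function manipulation prove cumbersome, an alternative is to first establish equidistribution of $\cdes$ with one of the statistics appearing in Lemma~\ref{lem:317_373_1744}, whose $q=-1$ evaluation may already be available, and then invoke that evaluation directly.
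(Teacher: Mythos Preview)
Your argument is correct. The identification of the per-cycle generating polynomial with $A_{m-1}(q)$ is justified exactly as you say: writing a cycle of length $m$ from its minimum gives a word whose first position is forced to be an ascent, and the tail $a_2\cdots a_m$ ranges freely over all $(m-1)!$ orderings with descent set preserved under standardization. The exponential-formula step, the Eulerian EGF, the specialization $\frac{-2}{-1-e^{-2x}}=1+\tanh x$, the integration to $x+\ln\cosh x$, and the coefficient extraction from $\tfrac12(e^{2x}+1)$ all check out and yield $f_n(-1)=2^{n-1}$ for $n\ge 1$.

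Your route, however, is genuinely different from the paper's. The paper does not compute anything: it invokes a known refinement of the cycle-descent signed sum, namely a result from the literature stating $\sum_{\pi\in S_n}(-1)^{\cdes(\pi)}t^{\pi^{-1}(1)}=2^{n-2}(t+t^n)$, and then sets $t=1$. So the paper's proof is a two-line citation, whereas yours is a self-contained derivation via the exponential formula and the classical Eulerian EGF. What your approach buys is independence from that external reference and, along the way, an explicit closed form $\sum_{n}f_n(-1)\tfrac{x^n}{n!}=e^x\cosh x$ that transparently explains the $2^{n-1}$; what the paper's approach buys is brevity and a stronger $t$-refined identity for free. Your suggested fallback through Lemma~\ref{lem:317_373_1744} would not shorten matters, since in the paper that equidistribution is proved \emph{after} the present theorem and is used to transport the CSP to the arrow-pattern statistic, not the other way around.
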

\begin{proof}
  Recall $\cdes(\pi)$ denotes the cycle descent number statistic on the permutation $\pi$.
  By \cite[Theorem 1]{CycleDescent} (setting $x=1$ and summing over all $i$), we have
  \[\sum_{\pi\in\mathfrak{S}_n} (-1)^{\cdes(\pi)}t^{\pi^{-1}(1)}=2^{n-2}(t+t^n).
  \]
  Setting $t=1$ gives that the generating function of the cycle descent number evaluated at $-1$ equals $2^{n-1}$. Thus, the cycle descent number exhibits the desired CSP.
\end{proof}

\begin{definition}
  Given a permutation $\pi$, a \textbf{12 arrow pattern}, denoted  $\arrowpattern$, is an instance of a $12-$ pattern, that is, an ascent $\pi_i,\pi_{i+1}$, with the special property that when you apply the first fundamental transform (Definition \ref{def:fund_transform}) to create a new permutation $\sigma=\mathcal{F}(\pi)$, you have $\sigma(\pi_i)=\pi_{i+1}$.
\end{definition}

See~\cite[Definition 17]{TennerArrow} for the definition of arrow patterns. Since this is a very special case, we do not need to define these in full generality.

\begin{example}
  Let $\pi=72358164$.  The following $(\pi_1,\pi_{i+1})$ pairs form $12-$ patterns in $\pi: (2,3), (3,5), (5,8),$ and $(1,6)$.  To determine which of these are $\arrowpattern$ arrow patterns, calculate the
  first fundamental transform of $\pi$ by inserting $)($ before each left-to-right maximum to divide $\pi$ into cycles.  Writing the resulting permutation,  $(7235)(8164),$ in one-line notation gives $\sigma=\mathcal{F}(\pi)=63587421.$  Since $\sigma(5)=7\not=8$, $(5,8)$ does not form a $\arrowpattern$ arrow pattern, but all other $12-$ pairs in this example do.
\end{example}

Note that a 12 pattern in a permutation $\pi$ given by $(\pi_i, \pi_{i + 1})$ is a 12 arrow pattern if $\pi_{i + 1}$ is not a left-to-right maximum, and no left-to-right maximum can contribute to a 12 arrow pattern.

\begin{lem}\label{lem:317_373_1744} The number of occurrences of the arrow pattern $\arrowpattern$ (Statistic $1744$) is equidistributed with the cycle descent number.
 \end{lem}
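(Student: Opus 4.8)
The plan is to exhibit an explicit statistic-preserving bijection rather than to compute either generating function. The starting observation is the characterization stated just above the lemma: a $12$ pattern $(\pi_i,\pi_{i+1})$ of $\pi$ is a $\arrowpattern$ arrow pattern precisely when $\pi_{i+1}$ is not a left-to-right maximum, i.e.\ precisely when $\pi_i$ and $\pi_{i+1}$ lie in the same cycle of $\tau:=\mathcal{F}(\pi)$ and $\pi_i<\pi_{i+1}$. Indeed, by Definition~\ref{def:fund_transform} the one-line word of $\pi$ is cut into the cycles of $\tau$ exactly before each left-to-right maximum, and two adjacent entries $\pi_i,\pi_{i+1}$ lying in a common block satisfy $\tau(\pi_i)=\pi_{i+1}$. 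Hence $\arrowpattern(\pi)$ equals the total number of \emph{internal ascents} (ascents between adjacent, non-wrapping entries) of the blocks of $\pi$, where each block is written starting from its first entry. Since that first entry is a left-to-right maximum it is the largest element of its block, so each block is a cycle of $\tau$ written starting from its maximum.

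Next I would set up the cyclic bookkeeping. Regard each cycle of $\tau$ as a cyclic sequence of length $k$ on distinct values and let $A$ denote its number of cyclic ascents, counting the wrap-around adjacency; its number of cyclic descents is then $k-A$. Cutting the cyclic sequence at its maximum deletes the adjacency ending at the maximum, which is always an ascent, so the number of internal ascents read from the maximum is $A-1$; thus $\arrowpattern(\pi)=\sum (A-1)$, summed over the cycles of $\tau$. By contrast, $\cdes$ cuts each cycle at its minimum and counts internal descents; deleting the adjacency ending at the minimum removes exactly one descent, so a cycle with $A'$ cyclic ascents contributes $(k-A')-1$ to the cycle descent number.

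The bijection is then $\Phi:\pi\mapsto \I(\mathcal{F}(\pi))$, the composition of the first fundamental transform with the inverse map, which is a bijection of $S_n$. Taking the inverse reverses each cycle of $\tau$, and reversing a cyclic sequence turns every cyclic ascent into a cyclic descent and vice versa; hence a cycle of $\tau$ with $A$ cyclic ascents becomes a cycle of $\I(\tau)$ with $A$ cyclic descents. Cutting that cycle at its minimum and counting internal descents therefore gives $A-1$, and summing over all cycles yields $\cdes(\I(\mathcal{F}(\pi)))=\sum (A-1)=\arrowpattern(\pi)$. Since $\Phi$ is a bijection this proves $\sum_{\pi}q^{\arrowpattern(\pi)}=\sum_{\pi}q^{\cdes(\pi)}$, which is the asserted equidistribution.

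I expect the main obstacle to be the careful $\pm1$ bookkeeping for cutting a cycle at its maximum (for arrow patterns) versus at its minimum (for cycle descents), together with verifying cleanly that passing to the inverse both preserves cycle lengths and exactly interchanges cyclic ascents and descents; once these are pinned down the equidistribution is immediate. A useful sanity check along the way is the running example $\pi=72358164$, where $\mathcal{F}(\pi)=63587421$ has cycles $(7235)(8164)$ with cyclic ascent numbers $3$ and $2$, giving $\arrowpattern(\pi)=2+1=3$, and $\I(\mathcal{F}(\pi))$ has cycles $(2\,7\,5\,3)$ and $(1\,8\,4\,6)$ with $2$ and $1$ cycle descents respectively.
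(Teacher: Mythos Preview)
Your proof is correct and uses the same bijection $\pi\mapsto\I(\mathcal{F}(\pi))$ as the paper; the paper verifies statistic-preservation by a small case analysis on whether the smaller element of an arrow pattern is the cycle minimum, while your cyclic-ascent/descent bookkeeping is a cleaner packaging of the same computation. One tiny caveat: your formula ``internal ascents $=A-1$'' tacitly assumes cycle length $\ge 2$, so you should remark that fixed points of $\tau$ contribute $0$ to both statistics.
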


\begin{proof}
  Let $\Stat1744(\pi)$ denote the number of occurrences of the arrow pattern $\arrowpattern$ in $\pi$.

  We prove the first fundamental transform (see Definition~\ref{def:fund_transform}), followed by the inverse map, is a bijection that maps  Statistic 1744 to the number of cycle descents.

  Let $\pi\in S_n$ and $\sigma$ denote the permutation obtained by applying the the first fundamental transform to $\pi$. Let $a,b$ be an ascent of $\pi$ such that  $\sigma(a)=b$, i.e. an instance of a $\arrowpattern$ arrow pattern. It must be that $b$ is not a left-to-right maximum of $\pi$, since if it were, by definition of $\mathcal{F}$, $a$ and $b$ would be in different cycles of $\sigma$ and it would not be true that $\sigma(a)=b$.  Thus, $a$ and $b$ are consecutive inside a cycle of $\sigma$ with the largest entry $c$ of that cycle written first. So the cycle looks like $(c,\ldots,a,b,\ldots,d)$ with $a<b<c$ and $d$ possibly equal to $b$.

  Having established that all instances of a $\arrowpattern$ arrow pattern appear as consecutive elements in some cycle of $\sigma$, we argue that the arrow pattern $a,b$ in $\pi$ corresponds to a unique cycle descent in $\sigma^{-1}$, namely the cycle descent $c,d$ when $a$ is the smallest element in the cycle, and $b,a$ otherwise.
  To see this, take the inverse of $\sigma$. This results in reversing everything after $c$ in the cycle, resulting in the cycle $(c,d,\ldots,b,a,\ldots)$. To calculate the cycle descent number, we need to rewrite each cycle with the smallest number first.  Let $s$ be this smallest element and suppose $a\not=s$  Then rewriting the cycle of $\sigma^{-1}$ so that $s$ appears first we have $(s,\ldots,b,a,
    \ldots)$ and $b,a$ is a cycle descent of $\sigma^{-1}$.

  If $s=a$, then rewriting the cycle of $\sigma^{-1}$ so that it starts with the smallest element we have $(a,\ldots,c,d,\ldots, b)$.  In this case, $b,a$ is not a descent of $\sigma^{-1}$, but $c,d$ is.  And since $c$ was the first element, and maximal, in the cycle of $\sigma$, it was not part of a $\arrowpattern$ arrow pattern in $\pi.$
  A similar argument in reverse shows that every cycle descent in $\sigma^{-1}$ that does not involve the maximal element corresponds directly to a $\arrowpattern$ arrow patterns in $\pi$, and a cycle descent in $\sigma^{-1}$ involving the maximal element corresponds to the $\arrowpattern$ arrow patterns in $\pi$ involving the minimal cycle element.
  Thus every $\arrowpattern$ arrow pattern contributing to Statistic 1744 on $\pi$ corresponds to a cycle descent of $\sigma^{-1}$, and vice versa, establishing the result $\Stat1744(\pi)=\cdes(\sigma^{-1})$.
 \end{proof}

\begin{cor}\label{cor:Corteel_arrow_CSP}
The number of occurrences of the arrow pattern $\arrowpattern$ (Statistic $1744$) exhibits the CSP with respect to involutions having $2^{n-1}$ fixed points.
\end{cor}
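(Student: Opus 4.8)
The plan is to obtain this corollary immediately by combining the equidistribution result of Lemma~\ref{lem:317_373_1744} with the cyclic sieving result of Theorem~\ref{thm:cycle_des_CSP}. The guiding principle, recorded in Subsection~\ref{sec:bg_CSP}, is that whether a statistic exhibits the CSP depends only on its generating function $f(q)$ together with the orbit structure of the map: the phenomenon demands that $f(\zeta^d)$ equal the number of elements fixed by $g^d$, and those fixed-point counts are dictated entirely by the map. Hence two statistics that are equidistributed—that is, that share the very same generating function—exhibit the CSP with respect to exactly the same orbit structures. For an involution the condition further collapses, as explained in Subsection~\ref{sec:bg_CSP}, to the single requirement $f(-1) = \lvert\mathrm{Fix}(g)\rvert$ (since $f(1) = n!$ automatically).

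First I would recall that Lemma~\ref{lem:317_373_1744} establishes that Statistic $1744$, the number of occurrences of the arrow pattern $\arrowpattern$, is equidistributed with the cycle descent number (Statistic $317$); by definition of equidistribution the two statistics share a common generating function $f(q) = \sum_{\sigma \in S_n} q^{\Stat1744(\sigma)} = \sum_{\sigma \in S_n} q^{\cdes(\sigma)}$. Next I would invoke Theorem~\ref{thm:cycle_des_CSP}, which shows the cycle descent number exhibits the CSP with respect to involutions having $2^{n-1}$ fixed points; concretely, $f(-1) = 2^{n-1}$, matching the fixed-point counts of Lemmas~\ref{lem: Corteel fixed pts} and~\ref{lem: Laguerre fixed pts}. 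Since Statistic $1744$ has the identical generating function $f$, the same evaluation $f(-1) = 2^{n-1}$ holds for it, and the CSP for Statistic $1744$ follows with respect to the same orbit structure.

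Because both constituent results are already in hand, no genuine obstacle remains: the entire substance of the argument lives in the bijective proof of Lemma~\ref{lem:317_373_1744} (tracking arrow patterns through the first fundamental transform followed by inversion) and in the generating-function evaluation underlying Theorem~\ref{thm:cycle_des_CSP}. The corollary is thus a purely formal consequence, requiring only the observation that the CSP is an invariant of the pair consisting of a generating function and an orbit structure.
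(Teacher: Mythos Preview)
Your proposal is correct and matches the paper's approach exactly: the corollary is stated without proof because it follows immediately from Lemma~\ref{lem:317_373_1744} (equidistribution of Statistic~1744 with the cycle descent number) together with Theorem~\ref{thm:cycle_des_CSP}, precisely as you argue.
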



\subsection{The  number of midpoints of decreasing subsequences of length 3 }\label{subsec:Corteel midpoints}

We establish the CSP for the number of midpoints of decreasing and increasing subsequences of length $3$, as well as the number of distinct positions of the pattern letter $3$ in $132$ patterns, and the number of distinct positions of the letter $2$ in $213$ patterns.   

\begin{thm}
  \label{conj:371}
  The number of midpoints of decreasing subsequences of length $3$ (Statistic $371$) exhibits the CSP with respect to involutions having $2^{n-1}$ fixed points.  
\end{thm}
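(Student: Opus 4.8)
The plan is to reduce to Stembridge's $q=-1$ phenomenon, as throughout this section. The relevant maps are involutions with $2^{n-1}$ fixed points (Lemmas~\ref{lem: Corteel fixed pts} and~\ref{lem: Laguerre fixed pts}), and $f_n(1)=n!$ holds automatically, so the CSP is equivalent to the single evaluation $f_n(-1)=2^{n-1}$, where $f_n(q)=\sum_{\sigma\in S_n}q^{\Stat371(\sigma)}$. Thus the entire task is to show $\sum_{\sigma\in S_n}(-1)^{\Stat371(\sigma)}=2^{n-1}$. The first step is a combinatorial reformulation of the statistic: position $j$ is the midpoint of a decreasing subsequence of length $3$ exactly when there is a larger entry to its left and a smaller entry to its right, that is, exactly when $\sigma_j$ is \emph{neither} a left-to-right maximum \emph{nor} a right-to-left minimum. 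In particular positions $1$ and $n$ are never midpoints, so all midpoints lie among the interior positions.

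I would then prove the evaluation by induction on $n$ using the sign-reversing involution $\psi$ that swaps the last two entries $\sigma_{n-1}$ and $\sigma_n$. The key observation is that this swap changes the midpoint status of \emph{only} position $n-1$: for every position $j\le n-2$ the set of entries lying to the right of $j$ is unchanged, as are all entries to the left of $j$, so whether $\sigma_j$ is a left-to-right maximum or a right-to-left minimum is unaffected, and position $n$ is never a midpoint. A short case analysis on whether $\sigma_{n-1}\sigma_n$ is an ascent or a descent then shows that $\psi$ reverses the parity of $\Stat371$ exactly when the larger of $\sigma_{n-1},\sigma_n$ is \emph{not} a left-to-right maximum, equivalently exactly when $n\notin\{\sigma_{n-1},\sigma_n\}$. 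Hence all permutations with $n$ outside the last two positions cancel in pairs, leaving
\[\sum_{\sigma\in S_n}(-1)^{\Stat371(\sigma)}=\sum_{\substack{\sigma\in S_n\\ n\in\{\sigma_{n-1},\sigma_n\}}}(-1)^{\Stat371(\sigma)}.\]

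To finish, I would show the surviving sum equals $2f_{n-1}(-1)$. Deleting the entry $n$ from a surviving permutation yields a permutation $\tau\in S_{n-1}$, and inserting $n$ either in the last or the second-to-last position adds no new midpoint (in both cases $n$ is a left-to-right maximum and contributes nothing to the ``smaller entry to the right'' condition for the positions before it), so $\Stat371(\sigma)=\Stat371(\tau)$; since each $\tau$ arises from exactly two survivors, the surviving sum is $2f_{n-1}(-1)$. With the base case $f_1(-1)=1$, induction gives $f_n(-1)=2^{n-1}$, which is the desired CSP. The main obstacle is the boundary case analysis for $\psi$: one must verify precisely when swapping the last two entries flips the parity, and this hinges on the interaction of the swap with the record (left-to-right maximum) structure; this resolves cleanly and pins down the surviving set exactly. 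Finally, the companion statistics of this subsection---midpoints of increasing subsequences of length $3$ and the numbers of distinct positions of the pattern letters in $132$ and $213$ patterns---inherit the CSP via the equidistribution of Lemma~\ref{lem:equid_371}.
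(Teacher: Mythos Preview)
Your proof is correct, but it follows a genuinely different route from the paper's. The paper constructs a single sign-reversing involution $\psi$ on $S_n$ with exactly $2^{n-1}$ fixed points: it locates the maximal midpoint index $j$ of a $3{*}{*}$ pattern (either $321$ or $312$), shows the corresponding endpoint index $k$ is unique, and swaps $\sigma_j$ with $\sigma_k$. The fixed points are precisely the $\{321,312\}$-avoiding permutations, which number $2^{n-1}$ by Simion--Schmidt, and a somewhat delicate case analysis shows that on each $2$-orbit the number of midpoints changes by exactly $\pm 1$. No induction is used.

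Your argument is simpler and more elementary: the swap of the last two entries is trivial to describe, the case analysis collapses to ``does $n$ sit in one of the last two positions?'', and the deletion of $n$ reduces cleanly to $S_{n-1}$. What you lose is the explicit combinatorial identification of a fixed-point set: the paper's proof exhibits the $2^{n-1}$ permutations that ``witness'' the CSP as a natural pattern-avoidance class (each with statistic $0$), whereas your argument produces the count $2^{n-1}$ only at the end of the induction. Both approaches are valid; yours is shorter, while the paper's gives a non-recursive bijective explanation tied to $\{321,312\}$-avoidance.
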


\begin{proof}
  Define an involution $\psi$ on $S_n$ with $2^{n-1}$ fixed points, where each fixed point contributes 0 to Statistic 371, and if $\sigma$ is not a fixed point then the number of midpoints of decreasing subsequences in $\sigma$ and $\psi(\sigma)$ differ by $\pm 1,$ as follows.  Let $3**$ denote a pattern of the form $321$ or $312$. Let $j$ be the maximal midpoint index over all such patterns (if they exist), i.e.\ $j$  is maximal such that there exists $i, k$ with $i < j < k$ and $\sigma_i > \sigma_j , \sigma_k$. Thus $j$ is the maximum index corresponding to $2$ in $321$ patterns or $1$ in $312$ patterns. If $i<j<k$ and $i<j<k'$ are both $3**$ patterns with $k<k'$ then $i<k<k'$ forms another $3**$ pattern with midpoint $k$ greater than $j$.  Thus, $3**$ patterns corresponding to \emph{maximum} midpoint index $j$ have not only unique midpoint index $j$, but also unique endpoint index $k$.

  We define $\psi$ as follows:
  \begin{itemize}
    \item If $\sigma$ contains a $3**$ pattern, set $\psi(\sigma)$ to be the permutation created from $\sigma$ by swapping $\sigma_j$ and $\sigma_k$.  In other words, $\psi(\sigma)$ converts the $3**$ pattern with maximum midpoint index $j$ from $321$ to $312$, or vice versa.
    \item If $\sigma$ avoids the patterns  $321$ and $312$, then $\psi(\sigma) = \sigma$.
  \end{itemize}

  Clearly, $\sigma$ is a fixed point of $\psi$ if and only if it avoids the patterns $321$ and $312$.  By \cite{SimionSchmidt}, there are exactly $2^{n-1}$ such permutations of $[n]$.   Otherwise, by the definition of $\psi$, the maximal midpoint index $j$ of a $3**$ pattern in $\sigma$ is the same as that in $\psi(\sigma),$  only $\psi(\sigma)_j$ corresponds to $1$ in the pattern if $\sigma_j$ corresponds to $2$ and $\psi(\sigma)_j$ corresponds to $2$ in the pattern if $\sigma_j$ corresponds to $1.$  Applying $\psi$ to $\psi(\sigma)$ swaps back the $312$ and $321$ patterns interchanged by $\psi$.  Thus $\psi(\psi(\sigma))=\sigma$, establishing $\psi$ is an involution with $2^{n-1}$ fixed points.

  All that remains to show is that $\sigma$ and $\psi(\sigma)$ have a number of midpoints of decreasing subsequences of length $3$ that differ by $\pm 1$. All decreasing subsequences of length three that do not involve $\sigma_j$ and $\sigma_k$ are unchanged by $\psi.$  Thus we only need to examine those patterns that include $\sigma_j$, $\sigma_k$, or both.

  We start by noting that for all indices $s>j$ but not equal to $k$, $\sigma_s>\sigma_i$ since otherwise $i,k,s$ would form a $3**$ pattern contradicting our assumption that $j$ was the maximal midpoint of such patterns.  Thus, there are no decreasing subsequences of length 3 in $\sigma$ or $\psi(\sigma)$ starting at index $j$ or $k$, or including index $k$ as a midpoint.
  Next consider length 3 decreasing subsequences in $\sigma$ with indices $i'<j'<k',$ ending at $j$ or $k$, i.e.\ with $k'=j$ or $k.$ For all such subsequences, $j'\leq j$ since otherwise $\sigma_{i'},\sigma_{j'},\sigma_{k'}$ would form a $3**$ pattern with midpoint index $j'$ greater than the maximum $j$.  If $j'<j$ and $k'=k$ or $j$ then swapping $\sigma_j$ and $\sigma_k$ exchanges the decreasing subsequence $\sigma_{i'},\sigma_{j'}, \sigma_{k}$ with $\psi(\sigma)_{i'},\psi(\sigma)_{j'}, \psi(\sigma)_j$, or $\sigma_{i'},\sigma_{j'}, \sigma_{j}$ with $\psi(\sigma)_{i'},\psi(\sigma)_{j'}, \psi(\sigma)_k.$  In either case, $j'$ is still a midpoint of a decreasing subsequence of length three and no length three subsequence midpoints have been added or destroyed.

  The only length 3 decreasing subsequences left to consider are those with midpoint index $j$.  Recall that the endpoint index $k$ of a $3**$ pattern with maximum midpoint index $j$ is unique.  Thus switching $\sigma_j$ and $\sigma_k$ either breaks all decreasing subsequences of length $3$ with midpoint index $j$, or creates a decreasing subsequence of length 3 in $\psi(\sigma)$ with midpoint $j$ when $j$ was not a midpoint of such a decreasing subsequence before.  More precisely, if $i<j<k$ are the indices of a $3**$ pattern in $\sigma$, with $\sigma_j > \sigma_k$, then $i,j,k$ forms a $321$ pattern in $\sigma$ that breaks in $\psi(\sigma)$; and if $\sigma_j < \sigma_k$, then $i,j,k$ forms a $312$ pattern in $\sigma$ that becomes a $321$ pattern in $\psi(\sigma)$. Thus, in swapping $\sigma_j$ and $\sigma_k$, the number of midpoints of decreasing subsequences of length three is changed by $\pm 1.$

  This shows that Statistic $371$ exhibits the CSP with respect to  any involution with $2^{n-1}$ fixed points.
\end{proof}

\begin{example}\label{ex: 3**}
  The following is an example of $\psi$ from the proof of Theorem \ref{conj:371}. Let $n = 6$ and $\sigma = 251346$. Then $\sigma$ has no midpoints of decreasing subsequences of length $3$ and $j = 4, k = 5$ since $\sigma_4 = 3, \sigma_5 = 4 < \sigma_2 = 5$. So, $\psi(\sigma) = 251436$, which has one midpoint of decreasing subsequences of length $3$ at spot $4$.

  Next, let $\sigma = 352461$. Then $\sigma$ has two midpoints of decreasing subsequences of length $3$ and $j = 4, k = 6$ since $\sigma_4 = 4, \sigma_6 = 1 < \sigma_2 = 5$. So, $\psi(\sigma) = 352164$, which has one midpoint of decreasing subsequences of length $3$ as it lost the one at spot $4$.
\end{example}

Again, recall Definition~\ref{def:patterns} for permutation patterns.
\begin{lem}\label{lem:equid_371}
  The following statistics are equidistributed:
  \begin{itemize}
    \item  Statistic $371$: The number of midpoints of decreasing subsequences of length $3$,
    \item Statistic $372$: The number of midpoints of increasing subsequences of length $3$,
    \item  Statistic $1683$: The number of distinct positions of the pattern letter $3$ in occurrences of $132$,
    \item Statistic $1687$: The number of distinct positions of the pattern letter $2$ in occurrences of $213$.
  \end{itemize}
\end{lem}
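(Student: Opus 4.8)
The plan is to prove all four equidistributions by reducing each statistic to a count of ``special positions'' and then linking those counts through a short chain of bijections. First I would record the reformulations that are immediate from Definition~\ref{def:patterns}. Statistic $371$ counts the indices $j$ that are the middle index of some $321$-pattern; plotting a point at $(j,\sigma_j)$, this is the number of $j$ having a larger value to the left and a smaller value to the right, i.e.\ the positions that are \emph{neither} a left-to-right maximum \emph{nor} a right-to-left minimum. Statistic $372$ counts the analogous positions for $123$-patterns (neither a left-to-right minimum nor a right-to-left maximum). Statistic $1683$ counts the $j$ serving as the ``$3$'' (the largest, middle letter) of a $132$-pattern, i.e.\ those $j$ for which there exist $i<j<k$ with $\sigma_i<\sigma_k<\sigma_j$, and Statistic $1687$ counts the $i$ serving as the ``$2$'' (the first letter) of a $213$-pattern, i.e.\ those $i$ for which there exist $i<j<k$ with $\sigma_j<\sigma_i<\sigma_k$. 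A useful global check is that each statistic vanishes exactly on a pattern-avoiding class ($321$-, $123$-, $132$-, $213$-avoiding permutations respectively), all enumerated by the Catalan number $C_n$ \cite{SimionSchmidt}, so the constant terms of the four generating functions already agree.

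The easy step is $371\equiv372$. The complement $\C$ sends a decreasing triple $\sigma_i>\sigma_j>\sigma_k$ to an increasing triple $\C(\sigma)_i<\C(\sigma)_j<\C(\sigma)_k$ at the \emph{same} three indices, so it carries the set of $321$-midpoint positions of $\sigma$ bijectively onto the set of $123$-midpoint positions of $\C(\sigma)$, fixing the index $j$. Hence $\Stat371(\sigma)=\Stat372(\C(\sigma))$, and since $\C$ is a bijection on $S_n$ the two statistics are equidistributed. (The reverse $\R$ works equally well, sending the midpoint $j$ to $n+1-j$.)

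The main obstacle is to connect the monotone class $\{371,372\}$ to Statistics $1683$ and $1687$, and here no symmetry of the square suffices: under reverse, complement, inverse, and their compositions, the pattern $321$ lies in the two-element orbit $\{321,123\}$, whereas $132$ and $213$ lie in the four-element orbit $\{132,231,312,213\}$, so these patterns are never interchanged. Moreover, unlike the condition for Statistic $371$, which splits into two \emph{independent} one-sided conditions (a larger value to the left, a smaller value to the right), the condition for Statistic $1683$ carries the linking inequality $\sigma_i<\sigma_k$ between the left and right witnesses and so does not reduce to a pair of extremal conditions. Consequently I expect the crux to be the construction of an explicit statistic-transporting bijection $\Phi\colon S_n\to S_n$ with $\Stat371(\sigma)=\Stat1683(\Phi(\sigma))$. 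Since the zero-locus of each statistic is a Catalan pattern class, a natural strategy is to build $\Phi$ from one of the standard bijections between $321$-avoiding and $132$-avoiding permutations (for instance through Dyck paths) and then to verify, on an arbitrary permutation, that it matches $321$-midpoint positions with $132$-peak positions; an alternative is an induction on $n$, or on the position of the largest entry, inserting one value at a time while tracking how the count of special positions changes. Confirming that the linking inequality $\sigma_i<\sigma_k$ is respected under $\Phi$ is the step I expect to demand the most care.

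Finally, $1683\equiv1687$ exhibits the same difficulty: although $132$ and $213$ share a symmetry orbit, the distinguished letters (the ``$3$'' versus the ``$2$'') are not exchanged by any single symmetry, so a direct symmetry argument again fails. The cleanest route is to pass both $1683$ and $1687$ through Statistic $371$ using the bijection(s) of the previous paragraph rather than relating them to each other directly; together with $371\equiv372$ this assembles the full chain $372\equiv371\equiv1683\equiv1687$. Throughout, FindStat's suggested maps supply concrete candidate bijections to test, and the small cases $n\le4$ (where all four generating functions coincide, for example $14+8q+2q^2$ when $n=4$) give an explicit target against which to validate any proposed $\Phi$.
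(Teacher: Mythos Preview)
Your proposal is a plan, not a proof, and it contains one genuine error.

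Your step $371\equiv 372$ via the complement is correct and matches the paper. But your claim that ``a direct symmetry argument again fails'' for $1683\equiv 1687$ is wrong. The composition $\C\I\C$ (equivalently, reflection of the permutation matrix about the anti-diagonal) sends $132$ to $213$ and carries the circled letter exactly where you need it: tracking the ``$3$'' at the middle position of $132$ through $\C$ (it becomes the ``$1$'' of $312$, same position), then $\I$ (position and value swap, so it becomes the ``$2$'' of $231$ at the first position), then $\C$ (it stays the ``$2$'' of $213$, same position). Concretely, if $j$ is a position counted by Statistic~$1683$ in $\sigma$, then $n+1-\sigma_j$ is a position counted by Statistic~$1687$ in $\C\I\C(\sigma)$, and this is a bijection because $\sigma$ is a permutation and $\C\I\C$ is an involution. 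This is precisely how the paper handles $1683\equiv 1687$, in two lines. Your dismissal of symmetry here seems to come from only checking $\R$, $\C$, and $\R\C$, overlooking that under $\I$ the count of \emph{distinct positions} of a marked letter becomes the count of \emph{distinct values} of the corresponding letter in the inverse pattern, which is the same number.

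For the genuinely hard link $372\equiv 1683$ you correctly diagnose that no dihedral symmetry will do (the orbits $\{123,321\}$ and $\{132,231,312,213\}$ are disjoint), but you then stop at a wish list of possible bijections without constructing one. The paper does not build such a $\Phi$ either; it invokes Theorem~2 of Thamrongpairoj--Remmel, which proves the equidistribution of $1\,\circlesign{2}\,3$ and $1\,\circlesign{3}\,2$ directly. If you want a self-contained argument, that is the result you must reproduce; the Catalan check on the zero loci and the $n\le 4$ generating functions are encouraging but do not substitute for it.
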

\begin{proof}
  Take the pattern $132$, in which we follow the entry that is initially the pattern letter $3$. Its positions correspond to Statistic 1683. Write $1\,\circlesign{3}\,2$ for this pattern, and the circled entry is the one that we follow. Through complement, inverse and complement again, this position is sent to the $2$ of the pattern $213$:
  \[ 1\,\circlesign{3}\,2 \xmapsto{\mathcal{C}} 3\,\circlesign{1}\,2 \xmapsto{\mathcal{I}} \circlesign{2}\,3\,1 \xmapsto{\mathcal{C}} \circlesign{2}\,1\,3.\]
  Hence, the number of distinct positions of the pattern letter $3$ in $132$ is equidistributed with the number of distinct positions of the pattern letter $2$ in $213$ (Statistic 1687).
  The number of midpoints of increasing (resp.\ decreasing) subsequences of length $3$ corresponds to the patterns $1\,\circlesign{2}\,{3}$ and $3\,\circlesign{2}\,{1}$ respectively in the language above. Theorem $2$ of \cite{ThamrongpairojRemmel} states the equidistribution of $1\,\circlesign{2}\,{3}$ and $1\,\circlesign{3}\,{2}$. Because $\C(1\,\circlesign{2}\,{3}) = 3\,\circlesign{2}\,{1}$, the number of midpoints of decreasing sequences is also equidistributed with the statistics above. Thus Statistics $371$ and $372$ are also equidistributed.
\end{proof}

\begin{cor}\label{cor:Corteel_mid_decreasing}
  The following statistics exhibit the CSP under involutions with $2^{n-1}$ fixed points.
  \begin{itemize}
    \item Statistic $372$: The number of midpoints of increasing subsequences of length $3$,
    \item  Statistic $1683$: The number of distinct positions of the pattern letter $3$ in occurrences of $132$ in a permutation,
    \item Statistic $1687$: The number of distinct positions of the pattern letter $2$ in occurrences of $213$,
  \end{itemize}
\end{cor}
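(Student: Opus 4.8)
The plan is to derive this immediately from the two preceding results. Theorem~\ref{conj:371} establishes that Statistic $371$ exhibits the CSP with respect to any involution with $2^{n-1}$ fixed points, and Lemma~\ref{lem:equid_371} shows that Statistics $372$, $1683$, and $1687$ are each equidistributed with Statistic $371$. By definition, equidistributed statistics share the same generating function $f(q)$, and since exhibiting the CSP for a \emph{fixed} orbit structure depends only on $f(q)$, any statistic equidistributed with one that satisfies the CSP for a given orbit structure must satisfy it as well. So the whole proof is just an application of this principle to the four statistics in question.

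Concretely, for an involution (an action of order $2$) with $2^{n-1}$ fixed points, the CSP reduces to verifying the two conditions $f(1)=n!$ and $f(-1)=2^{n-1}$. The first holds for every permutation statistic generating function by the observation following Definition~\ref{def:stat_gen_fun}, and the second is inherited from Statistic $371$ through equidistribution. Thus I would simply invoke Theorem~\ref{conj:371} together with Lemma~\ref{lem:equid_371} to conclude that each of Statistics $372$, $1683$, and $1687$ exhibits the CSP for involutions with $2^{n-1}$ fixed points.

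Because the argument is a direct combination of an already-established equidistribution with an already-established CSP, there is no genuine obstacle at this stage: all of the substantive work resides in Lemma~\ref{lem:equid_371}, whose chain of complement/inverse/complement transformations and appeal to \cite{ThamrongpairojRemmel} supplies the equidistribution. The corollary itself requires only the observation, recorded in Subsection~\ref{sec:bg_CSP}, that exhibiting the CSP for a given orbit structure is a property of the generating function alone.
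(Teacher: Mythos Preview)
Your proposal is correct and matches the paper's own proof essentially verbatim: the paper simply invokes Lemma~\ref{lem:equid_371} for the equidistribution and Theorem~\ref{conj:371} for the CSP of Statistic~$371$, concluding that the three listed statistics inherit the CSP.
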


\begin{proof}
  Lemma \ref{lem:equid_371} shows that Statistic $371$ is equidistributed with Statistics $372$, $1683$, and $1687$, and thus by Theorem~\ref{conj:371} these statistics also exhibit the CSP under involutions with $2^{n-1}$ fixed points.
\end{proof}
 
\begin{cor}\label{cor:Corteel_1004}
  The number of indices that are either left-to-right maxima or right-to-left minima (Statistic 1004) exhibits the CSP under involutions with $2^{n-1}$ fixed points for even values of $n$, but not for odd values of $n \geq 3$.
\end{cor}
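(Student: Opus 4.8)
The plan is to avoid computing the generating function of Statistic $1004$ from scratch, and instead reduce everything to Statistic $371$, whose value at $q=-1$ is already pinned down by Theorem~\ref{conj:371}. Since the maps under consideration are involutions (order $2$), the triple exhibits the CSP if and only if the single root-of-unity evaluation $f(-1)$ equals the fixed-point count $2^{n-1}$; here $f$ is the generating function of Statistic $1004$, and the remaining condition $f(1)=n!$ (the $d=0$ case) is automatic. So the whole corollary comes down to determining $f(-1)$.

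The crucial step is the complementation identity
\[ \Stat 1004(\sigma) = n - \Stat 371(\sigma) \qquad \text{for all } \sigma\in S_n. \]
To prove it I would argue at the level of positions. A position $i$ is counted by Statistic $1004$ unless it is neither a left-to-right maximum nor a right-to-left minimum, and this failure happens exactly when there exist $j<i$ with $\sigma_j>\sigma_i$ and $k>i$ with $\sigma_k<\sigma_i$. Together these conditions say $\sigma_j>\sigma_i>\sigma_k$ with $j<i<k$, which is precisely the statement that $i$ is the midpoint of a decreasing subsequence of length $3$. Hence the indices \emph{not} counted by Statistic $1004$ are exactly the midpoints counted by Statistic $371$, and since there are $n$ positions in all, the two statistics sum to $n$. (One should double-check against the convention that Statistic $371$ counts \emph{distinct} midpoint positions rather than with multiplicity, which matches the ``number not counted by $1004$'' interpretation; the examples in the proof of Theorem~\ref{conj:371} confirm this.)

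With the identity in hand, the conclusion is immediate by substitution. Using $(-1)^{-m}=(-1)^m$,
\[ f(-1) = \sum_{\sigma\in S_n}(-1)^{\,n-\Stat 371(\sigma)} = (-1)^n\sum_{\sigma\in S_n}(-1)^{\Stat 371(\sigma)} = (-1)^n\cdot 2^{n-1}, \]
where the last equality is the evaluation at $q=-1$ of the generating function of Statistic $371$, which equals $2^{n-1}$ by Theorem~\ref{conj:371} (an order-$2$ CSP forces the value at $-1$ to be the number of fixed points, namely $2^{n-1}$). For even $n$ this gives $f(-1)=2^{n-1}$, matching the fixed-point count, so the CSP holds; for odd $n\geq 3$ it gives $f(-1)=-2^{n-1}\neq 2^{n-1}$ since $2^{n-1}>0$, so the CSP fails. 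The only real content is verifying the complementation identity, and I expect that to be the main (though routine) obstacle; once it is established, no independent involution or generating-function analysis for Statistic $1004$ is required.
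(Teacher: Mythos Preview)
Your proposal is correct and follows essentially the same approach as the paper: both establish the identity $\Stat 1004(\sigma)=n-\Stat 371(\sigma)$ (you supply a bit more justification for this than the paper does) and then evaluate $f(-1)=(-1)^n\cdot 2^{n-1}$ via Theorem~\ref{conj:371}, concluding the CSP holds for even $n$ and fails for odd $n\geq 3$ since the evaluation is negative.
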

\begin{proof}
  Note that the generating function for this statistic is the complement with respect to $n$ of Statistic $371,$ the number of midpoints of decreasing subsequences of length $3$, meaning that Statistic $371$ consists of the indices that are neither left-to-right maxima nor right-to-left minima. Then, $\mathrm{Stat}1004 = n - \mathrm{Stat}371$, and the statistic generating function of Statistic 1004 is
  \[ \sum_{\sigma\in S_n} q^{\mathrm{Stat}1004(\sigma) } = \sum_{\sigma\in S_n} q^{n-\mathrm{Stat}371(\sigma)},\]
  whose evaluation at $-1$ is
  \[ \sum_{\sigma\in S_n} (-1)^{\mathrm{Stat}1004(\sigma) } = \sum_{\sigma\in S_n} (-1)^n (-1)^{\mathrm{Stat}371(\sigma)}= (-1)^n \sum_{\sigma\in S_n} (-1)^{\mathrm{Stat}371(\sigma)}.\]
  Therefore, when $n$ is even, the evaluations of the statistic generating functions for Statistics $371$ and $1004$ at $1$ and $-1$ agree. Following Theorem~\ref{conj:371}, the number of indices that are either left-to-right maxima or right-to-left minima exhibits the CSP for involutions with $2^{n-1}$ fixed points when $n$ is even.
  
  When $n$ is odd, the evaluation at $-1$ is negative, so the CSP does not occur.
\end{proof}


\subsection{The  number of occurrences of the pattern 32-1}\label{subsec:Corteel 321} 


To prove the CSP for the number of occurrences of the pattern $32 - 1$ and the pattern $12-3$, we define and study yet another involution with $2^{n-1}$ fixed points. 
Recall Definition~\ref{def:patterns}.
\begin{thm}\label{thm:357}
The number of occurrences of the pattern $32-1$ (Statistic $360$) exhibits the CSP with respect to involutions having $2^{n-1}$ fixed points.  
\end{thm}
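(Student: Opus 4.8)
The plan is to invoke Stembridge's $q=-1$ phenomenon. Because the map in question is an involution with $2^{n-1}$ fixed points, and because the statistic generating function $f(q)=\sum_{\sigma\in S_n}q^{\mathrm{Stat}360(\sigma)}$ automatically satisfies $f(1)=n!$, it suffices to prove $f(-1)=2^{n-1}$. I would establish this with a sign-reversing involution $\psi$ on $S_n$, in the same spirit as the proof of Theorem~\ref{conj:371}, but adapted to the vincular (dashed) nature of $32-1$.

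First I would pin down the fixed points. An occurrence of $32-1$ consists of an adjacent descent $\sigma_i>\sigma_{i+1}$ together with a later entry $\sigma_k<\sigma_{i+1}$ (where $k>i+1$); the companion vincular pattern $31-2$ is the same adjacent descent together with a later entry $\sigma_{i+1}<\sigma_k<\sigma_i$. Since every later entry below the descent top $\sigma_i$ falls into exactly one of these two cases, a permutation has no entry available to toggle precisely when it avoids both $32-1$ and $31-2$, i.e.\ when every adjacent descent top is smaller than all entries two or more positions to its right. I would show such permutations number $2^{n-1}$: in any of them the value $n$ must occupy position $n-1$ or $n$ (otherwise $n$ is an adjacent descent top with a smaller entry later), and deleting $n$ in either case is a bijection onto the corresponding avoiders of $S_{n-1}$, giving $a_n=2a_{n-1}$ with $a_1=1$. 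Each such avoider has $\mathrm{Stat}360=0$ and so contributes $+1$ to $f(-1)$.

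I would then define $\psi$ on the remaining permutations so that it pairs each with one of opposite $\mathrm{Stat}360$-parity, forcing these to cancel and leaving $f(-1)=\sum_{\text{fixed}}1=2^{n-1}$. The mechanism is to locate a canonical occurrence — for instance, the one whose middle entry sits at the maximal possible position $j$ (so that $\sigma_{j-1}>\sigma_j$ and some later entry lies below $\sigma_{j-1}$) — and to perform a minimal swap that moves a single entry across the threshold $\sigma_j$, converting exactly one $32-1$ occurrence at this descent into a $31-2$ occurrence or vice versa; concretely one exchanges $\sigma_j$ with the later entry nearest to it in value. Such a swap changes $\mathrm{Stat}360$ by $\pm 1$, hence flips parity, and I would verify it is undone by the same rule applied to the image.

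The hard part will be the bookkeeping that this swap changes $\mathrm{Stat}360$ by exactly an odd number and that $\psi$ is a genuine involution. Because the pattern is vincular, altering $\sigma_j$ can change whether position $j$ is itself an adjacent descent and can move entries across the thresholds of other descents, so a careless swap might add or remove a whole block of $32-1$ occurrences, or even shift which position is the maximal middle (as happens if one exchanges $\sigma_j$ with the wrong later entry). The maximality of $j$ — which forbids any middle of an occurrence to the right of $j$ — is the tool I expect to use to rule out this interference and to guarantee that the canonical data (the maximal middle $j$ and the value swapped with $\sigma_j$) are recovered after applying $\psi$; this plays exactly the role of the uniqueness-of-endpoint argument in Theorem~\ref{conj:371}, but demands more care in the vincular setting.
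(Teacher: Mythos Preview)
Your overall strategy --- a sign-reversing involution whose fixed set is the $\{32\text{-}1,\,31\text{-}2\}$-avoiders --- is sound, and your count of $2^{n-1}$ avoiders via deleting $n$ is correct. But the heart of the argument, the involution $\psi$ on the non-avoiders, is only sketched, and as stated it does not work. Your rule ``swap $\sigma_j$ with the later entry nearest to it in value'' is ambiguous when later entries sit equidistant above and below $\sigma_j$, and resolving the tie the wrong way breaks the parity change. For instance, in $\sigma=4231$ the maximal middle is $j=2$; the later entries below $\sigma_{j-1}=4$ are $3$ and $1$, each at distance $1$ from $\sigma_j=2$. Swapping with $3$ yields $4321$, but $\mathrm{Stat}360(4231)=1$ while $\mathrm{Stat}360(4321)=3$, so parity is unchanged. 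Swapping with $1$ does work here, but you have not shown that a single consistent rule handles all cases --- the difficulty is precisely that a swap at position $j$ can create or destroy adjacent descents elsewhere and hence whole batches of vincular occurrences at once. You flag this as ``the hard part''; it is not mere bookkeeping but the entire content of the proof, and the maximality-of-$j$ argument you borrow from Theorem~\ref{conj:371} does not transfer, because there the pattern was classical and here a swap can alter which positions are adjacent descents.

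The paper sidesteps all of this by swapping \emph{values} rather than positions: when $\sigma_1\notin\{1,2\}$ it exchanges the values $1$ and $2$, and when $\sigma_1\in\{1,2\}$ it recurses on the standardization of $\sigma_2\cdots\sigma_n$. Because the ``$1$'' letter of any $32\text{-}1$ occurrence can only be the value $1$ or $2$ when those values are what move, the effect is confined to occurrences whose third letter is $1$ or $2$; a short case check then shows that exactly one such occurrence is toggled (the one of the form $(j,2,1)$ when $2$ immediately follows some $j>2$), while every other occurrence with third letter $1$ or $2$ is matched to one of the same form in the image. The recursive definition makes both the involution property and the fixed-point count $2^{n-1}$ immediate. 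This value-based device is the missing idea in your proposal, and it eliminates the vincular interference you anticipated.
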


\begin{proof}
  We start by defining an involution $\psi$ on the set of permutations of $[n].$ For $n \leq 2$, define $\psi$ as the identity map.  For $n = 3$, define $\psi$ as pairing $312$ to $321$ and mapping everything else to itself.
For $n > 3$, define $\psi$ as follows:
  \begin{itemize}
    \item If $\sigma_1 \neq 1, 2$, $\psi(\sigma)$ is formed from $\sigma$ by swapping the values $1$ and $2$.
    \item If $\sigma_1 =1$ or $\sigma_1=2$, let $\sigma'$ be the permutation of $[n-1]$ defined by $\sigma'_i = \sigma_{i + 1} - 1$ if $\sigma_{i+1} \neq 1$ and $\sigma'_i = 1$ otherwise. Define $\psi(\sigma)$ by setting $\psi(\sigma)_1 = \sigma_1$. If $\sigma_1 = 1$, for $i > 1$, $\psi(\sigma)_i = \psi(\sigma')_{i-1} + 1$. If $\sigma_1 = 2,$ for $i > 1$, $\psi(\sigma)_i = \psi(\sigma')_{i-1} + 1$ if $\psi(\sigma')_{i-1} \neq 1$ and $\psi(\sigma)_i = 1$ otherwise.
  \end{itemize}
  Note that while $\psi$ is an involution, it is not the same map that was given in the proof of Theorem \ref{conj:371}.  The number of fixed points of $\psi$ for $n \leq 3$ is $2^{n-1}$. For $n > 3$, the number of fixed points is $2$ times the number of fixed points for $n - 1$, so again we have $2^{n-1}$. In addition, one can verify that any fixed point for $n \leq 3$ contributes 0 to the statistic. For $n > 3$, any fixed point has $\sigma_1 = 1$ or $2$ so $\sigma_1$ contributes to no pattern of the form $32-1$. Thus, any occurrence of that pattern must come from $\sigma_2\sigma_3\dotsm\sigma_n$. But as $\sigma$ is a fixed point $\sigma'$ must also be a fixed point, so $\sigma_2\sigma_3\dotsm\sigma_n$ contributes 0 to the statistic.

  Lastly, we show that over any orbit of size $2$, the change in the statistic is $\pm 1$. For simplicity,  say $(jk,l)$ contributes to the statistic if they have the pattern $32-1$, that is if the values $j,k,$ and $l$ occur in that order in the permutation and have $j > k > l$ with $k$ following directly after $j$. For example, $(32,1)$ contributes to the statistic for $\sigma = 3241$ and $(42,1)$  in $\sigma=4231$.

  For $n \leq 2$, all orbits are of size $1$, so there is nothing to show. For $n = 3$, the only orbit of size $2$ is $\{321, 312\}$, which we can verify has a change in the statistic of $\pm 1$. For $\sigma$ in an orbit of size $2$, either $2$ and $1$ are swapped in $\sigma$ or  either $\sigma_1 =1$ or $\sigma_1=2$ and $1$ and $2$ are swapped in some $\sigma', \sigma'', \dotsc$ So, we show for $n \geq 3$ that swapping $1$ and $2$ in $\sigma$ contributes $\pm 1$ to the statistic and then use induction for the cases $\sigma_1=1$ and $\sigma_1=2$, as $\sigma_1$ does not contribute to any occurrences of the pattern.

  If $(j2,1)$ contributes to the statistic, then swapping $1$ and $2$ changes the order of $1$ and $2$ and thus removes one occurrence of the pattern $32-1$. As $2$ moves to the right, any triple of the form $(jk, 2)$ still contributes to the pattern in $\psi(\sigma)$, and any new triples $(jk, 2)$ contributing to the statistic in $\psi(\sigma)$ would have been contributing as $(jk, 1)$ in $\sigma$. If $(jk, 1)$ contributes to the statistic in $\sigma$, then it either remains in $\psi(\sigma)$ (if $1$ is still to the right of $k$ after the swap) or it becomes $(jk, 2)$ in $\psi(\sigma)$ (if $1$ is to the left of $k$ after the swap). There can be no new contributions of the form $(jk, 1)$ in $\psi(\sigma)$ as $1$ moved to the left. Thus, the change in the total number of occurrences of the pattern $32-1$ is $-1$.

  If instead $1$ is to the left of $2$ in the statistic, then switching $1$ and $2$ adds $(j2, 1)$ as an occurrence of the pattern $32-1$ in $\psi(\sigma)$. Any triple of the form $(jk, 2)$ in $\sigma$ either remains in $\psi(\sigma)$ (if $2$ is still to the right of $k$ after the swap) or becomes $(jk, 1)$ in $\psi(\sigma)$ (if $2$ is to the left of $k$ after the swap). There can be no new contributions of the form $(jk, 2)$ in $\psi(\sigma)$ as $2$ moved to the left. Any triple of the form $(jk, 1)$ in $\sigma$ remains in $\psi(\sigma)$ as $1$ moved to the right, and any new triples $(jk, 1)$ contributing to the statistic in $\psi(\sigma)$ would have been contributing as $(jk, 2)$ in $\sigma$. Thus, the change in the total number of occurrences of the pattern $32-1$ is $1$.
\end{proof}

\begin{example}
  We give an example of $\psi$ from the proof of Theorem \ref{thm:357}. Let $n=4$ and $\sigma = 1432$. Since $\sigma_1=1$ we must look at $\sigma' = 321$ a permutation of $[3]$. $\psi(321) = 312$, so $\psi(1432) = 1423$. The number of occurrences of the pattern $32-1$ is $1$ for $\sigma$ and 0 for $\psi(\sigma)$.

  On the other hand, if we consider $\sigma = 1342$, then $\sigma' = 231$. Then $\psi(\sigma') = 231$, so $\psi(\sigma) = 1342$, and $\sigma$ is a fixed point.
\end{example}

\begin{cor}\label{cor:357} The number of occurrences of the pattern $12-3$ (Statistic $357$) exhibits the CSP under involutions with $2^{n-1}$ fixed points.
\end{cor}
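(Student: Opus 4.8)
The plan is to deduce this corollary from Theorem~\ref{thm:357} by proving that Statistic~$357$ (the number of occurrences of $12-3$) is equidistributed with Statistic~$360$ (the number of occurrences of $32-1$). By the observation in Subsection~\ref{sec:bg_CSP} that statistics with the same generating function exhibit the CSP for the same orbit structures, equidistribution immediately yields the claim. The bijection I would use is the complement map $\C$ of Definition~\ref{def:maps}.

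First I would unwind the two patterns using Definition~\ref{def:patterns}: an occurrence of $12-3$ in $\sigma$ is a triple of indices $i<i+1<k$, with the first two positions consecutive, such that $\sigma_i<\sigma_{i+1}<\sigma_k$; an occurrence of $32-1$ is a triple $i<i+1<k$ with $\sigma_i>\sigma_{i+1}>\sigma_k$. Since $\C$ fixes every position and sends each value $v$ to $n+1-v$, it reverses every pairwise comparison of values while leaving the consecutiveness of positions $i$ and $i+1$ intact. Hence $\C$ carries an occurrence of $12-3$ in $\sigma$ at indices $(i,i+1,k)$ to an occurrence of $32-1$ in $\C(\sigma)$ at the same indices, and conversely, so that $\mathrm{Stat}357(\sigma)=\mathrm{Stat}360(\C(\sigma))$ for every $\sigma$. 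Because $\C$ is a bijection of $S_n$, summing over all permutations gives
\[ \sum_{\sigma\in S_n} q^{\mathrm{Stat}357(\sigma)} = \sum_{\sigma\in S_n} q^{\mathrm{Stat}360(\C(\sigma))} = \sum_{\tau\in S_n} q^{\mathrm{Stat}360(\tau)}, \]
so the two statistics share a generating function, and Theorem~\ref{thm:357} then supplies the desired CSP.

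There is essentially no serious obstacle here; the only point requiring care is to confirm that the complement simultaneously preserves positional adjacency (so the consecutive pair of the pattern stays consecutive) and reverses all value comparisons, which is precisely what turns the increasing consecutive-then-larger configuration of $12-3$ into the decreasing consecutive-then-smaller configuration of $32-1$. One should also verify that the orientation of the dash matches up under $\C$, namely that the "free" third letter of $12-3$ corresponds to the free third letter of $32-1$; this is immediate from Definition~\ref{def:patterns}. I would note in passing that the reverse map does \emph{not} work directly, as it sends $12-3$ to $3-21$ rather than $32-1$, which is why the complement is the correct choice.
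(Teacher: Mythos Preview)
Your proposal is correct and follows essentially the same approach as the paper: both establish equidistribution of Statistics $357$ and $360$ via the complement map $\C$, which preserves positional adjacency while reversing all value comparisons, and then invoke Theorem~\ref{thm:357}. Your write-up simply spells out in more detail the verification that $\C$ carries each $12-3$ occurrence bijectively to a $32-1$ occurrence at the same index triple.
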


\begin{proof}
    Each instance of a $32-1$ pattern invertibly transforms to a $12-3$ pattern by the complement map, thus the two statistics are equidistributed and the CSP holds for the number of occurrences of the pattern $12-3$ also.
\end{proof}

\subsection{Conjectured instances of the CSP}
We leave the following as conjectures. They have been tested up to $n=10$.

\begin{conj}\label{conj:123}
  The difference in Coxeter length of a permutation and its image under the Simion-Schmidt map (Statistic $123$) exhibits the CSP under involutions with $2^{n-1}$ fixed points.
\end{conj}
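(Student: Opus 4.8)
The plan is to verify Stembridge's $q=-1$ phenomenon: since the relevant orbit structure is that of an involution with $2^{n-1}$ fixed points and $f(1)=n!$ holds automatically, it suffices to show that the statistic generating function $f(q)=\sum_{\sigma\in S_n}q^{\mathrm{Stat}123(\sigma)}$ satisfies $f(-1)=2^{n-1}$. Writing $\mathrm{SM}$ for the Simion--Schmidt map and recalling that Statistic $123$ is $\mathrm{Stat}123(\sigma)=\inv(\sigma)-\inv(\mathrm{SM}(\sigma))$, I would first rewrite
\[ f(-1)=\sum_{\sigma\in S_n}(-1)^{\inv(\sigma)-\inv(\mathrm{SM}(\sigma))}=\sum_{\sigma\in S_n}(-1)^{\inv(\sigma)}(-1)^{\inv(\mathrm{SM}(\sigma))}, \]
so that each summand is the product of the signs of $\sigma$ and of $\mathrm{SM}(\sigma)$. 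The key structural input is that $\mathrm{SM}$ is a retraction of $S_n$ onto the $132$-avoiding permutations fixing the positions and values of the left-to-right minima; consequently two permutations share an $\mathrm{SM}$-image precisely when they share their left-to-right minima, and each fiber $\mathrm{SM}^{-1}(\tau)$ consists of all placements of the remaining (non-minimal) values into the non-minimal positions in which each such value exceeds the most recent left-to-right minimum.

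Next I would group the sum over fibers, writing $f(-1)=\sum_{\tau}(-1)^{\inv(\tau)}S(\tau)$ with $S(\tau)=\sum_{\sigma\in\mathrm{SM}^{-1}(\tau)}(-1)^{\inv(\sigma)}$, where $\tau$ ranges over $132$-avoiding permutations. I would identify the signed fiber sum $S(\tau)$, up to an overall sign coming from the fixed left-to-right-minima cells, with the determinant of the $0/1$ matrix $A_\tau$ whose rows are the non-minimal positions, whose columns are the non-minimal values, and which has a $1$ exactly in the admissible cells (value above the relevant minimum). Because the admissible column-sets are nested (an earlier non-minimal position has a higher threshold, hence fewer admissible values), $A_\tau$ is a staircase matrix; such a matrix has determinant in $\{-1,0,1\}$, nonzero if and only if the nested admissible sets have sizes $1,2,\dots,m$, which is exactly the condition that $\mathrm{SM}^{-1}(\tau)$ is a singleton. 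Equivalently, and in the spirit of Theorems~\ref{conj:371} and \ref{thm:357}, whenever a fiber has at least two elements one can transpose the values in two non-minimal positions to obtain a fixed-point-free, sign-reversing involution on that fiber, forcing $S(\tau)=0$.

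It then remains to evaluate the singleton fibers. If $\mathrm{SM}^{-1}(\tau)=\{\tau\}$ then $S(\tau)=(-1)^{\inv(\tau)}$, so the fiber contributes $(-1)^{\inv(\tau)}S(\tau)=+1$, and thus $f(-1)$ counts the $132$-avoiding permutations with singleton fiber. I would characterize these as exactly the permutations avoiding both $123$ and $132$: if a $132$-avoiding $\tau$ contains a $123$ pattern, its two larger entries are non-minimal and occur in increasing order, and transposing them remains in the fiber, so the fiber is not a singleton; conversely, if the fiber has a second element then, because $\tau$ is the greedy smallest-admissible filling, a swap witnessing that element must exchange two increasing non-minimal values, which together with the preceding left-to-right minimum forms a $123$ pattern. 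Since $|S_n(123,132)|=2^{n-1}$ by Simion--Schmidt, this yields $f(-1)=2^{n-1}$ and hence the claimed CSP.

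The main obstacle is the bookkeeping linking the signed fiber sum to the staircase determinant, together with pinning down the precise combinatorial description of $\mathrm{SM}$ (preservation of left-to-right minima and greedy filling): because $\mathrm{SM}$ is non-local, a single move on $\sigma$ may displace several entries of $\mathrm{SM}(\sigma)$, so the cancellation must be organized fiber-by-fiber rather than by one global involution. A secondary delicate point is the two-directional swap argument identifying singleton fibers with the $\{123,132\}$-avoiders, where one must check that the greedy description of $\tau$ genuinely forces the swapped values into increasing order. Should the \Findstat\ convention for $\mathrm{SM}$ differ (for example preserving left-to-right maxima), the same scheme should apply after replacing $132$-avoidance and the pair $\{123,132\}$ by the corresponding class.
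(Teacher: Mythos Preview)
The paper does not prove this statement: it is stated as a conjecture (tested up to $n=10$), with a remark that Bishal Deb has since supplied a proof to appear elsewhere. So there is no ``paper's own proof'' to compare against, and your proposal is a genuine proof of an open item in the paper.

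Your argument is correct. The fiber decomposition via the Simion--Schmidt retraction is exactly right: fibers are indexed by the positions and values of left-to-right minima, and the non-minimal positions have nested admissibility sets determined by the most recent minimum. Two points deserve a word of justification that you only gesture at. First, for the determinant identification you need that the cross-inversions between minimal and non-minimal positions have parity independent of the filling $\pi$; this holds because for each non-minimal position $p_a$ with value $v$ the count $\#\{s:i_s<p_a,\ m_s>v\}$ equals $\#\{s:m_s>v\}$ (the constraint $v>m_{t(a)}$ makes the bound $s\le t(a)$ redundant), and summing over $a$ just permutes the values, giving a constant. Second, your sign-reversing involution can be made completely explicit: a non-singleton fiber forces two \emph{adjacent} non-minimal positions $p_a,p_{a+1}$ with equal admissibility sets (otherwise $|A_1|<\cdots<|A_m|=m$ with $|A_a|\ge a$ forces $|A_a|=a$ for all $a$), and swapping their values is the desired fixed-point-free sign-reversing involution. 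With these two checks your characterization of singleton fibers as the $\{123,132\}$-avoiders goes through, and $|S_n(123,132)|=2^{n-1}$ finishes the proof. Your closing remark about the alternative \Findstat\ convention is also correct: conjugating by complement flips the sign of the statistic but preserves its parity, so $f(-1)$ is unaffected.
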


\begin{conj}\label{conj:373}
  The number of weak excedances that are also mid-points of a decreasing subsequence of length $3$ (Statistic $373$) is equidistributed with the cycle descent number (Statistic $317$) and thus exhibits the CSP under involutions with $2^{n-1}$ fixed points.
\end{conj}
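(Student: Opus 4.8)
The plan is to prove the stronger of the two assertions—that Statistic 373 is equidistributed with the cycle descent number (Statistic 317)—from which the CSP follows immediately by Theorem~\ref{thm:cycle_des_CSP}. First I would reinterpret Statistic 373 so that both statistics sit on a common footing. As observed in the discussion preceding Corollary~\ref{cor:Corteel_1004}, a position $j$ is the midpoint of a decreasing subsequence of length $3$ exactly when $\sigma_j$ is neither a left-to-right maximum (so that some earlier entry exceeds it) nor a right-to-left minimum (so that some later entry lies below it). Hence Statistic 373 counts the \emph{interior weak excedances} of $\sigma$: the positions $j$ with $\sigma_j\geq j$ whose value is neither a left-to-right maximum nor a right-to-left minimum. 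In parallel, I would record the cycle descent number in one-line terms: using the first fundamental transform $\mathcal{F}$ (Definition~\ref{def:fund_transform}) as in Lemma~\ref{lem:317_373_1744}, each cycle of $\mathcal{F}(\pi)$ is a maximal block of $\pi$ headed by a left-to-right maximum, and a short computation shows $\cdes(\mathcal{F}(\pi))$ equals the number of consecutive descents of $\pi$ whose left entry is not a left-to-right maximum. Equivalently, on the level of $\sigma$ itself, $\cdes(\sigma)$ equals the number of deficiencies of $\sigma$ minus the number of nontrivial cycles.

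With both statistics expressed combinatorially, the heart of the argument is to construct an explicit bijection $\varphi\colon S_n\to S_n$ with $\Stat373(\sigma)=\cdes(\varphi(\sigma))$ for all $\sigma$. I would begin by checking whether any FindStat map or short composition of $\mathcal{F}$, inverse, reverse, and complement does the job; unfortunately none can, since these maps fail already at $n=4$ (for instance $\cdes(\mathcal{F}(4321))=2$ while $\Stat373(4321)=1$), so the naive analogue of Lemma~\ref{lem:317_373_1744} does not transport Statistic 373 to the cycle descent number. I therefore expect to need a bespoke, likely recursive, bijection: one natural scheme is to peel off the extreme values $1$ and $n$—neither of which is ever the midpoint of a decreasing triple, so neither contributes to Statistic 373—and to build $\varphi$ by insertion, controlling at each step how a new interior weak excedance on one side is matched by a new cycle descent on the other.

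The main obstacle is precisely this reconciliation: Statistic 373 is governed by a mixed position-versus-value comparison (the weak excedance condition $\sigma_j\geq j$) together with two value-versus-value comparisons, whereas the cycle descent number is an intrinsically cyclic statistic, and no single standard map respects both at once. If constructing the statistic-preserving bijection proves too delicate, a fallback that still yields the CSP (though not the full equidistribution) is to mimic the involution arguments of Theorems~\ref{conj:371} and~\ref{thm:357}: build an involution on $S_n$ with exactly $2^{n-1}$ fixed points, each contributing $0$ to Statistic 373, such that every two-element orbit changes the statistic by $\pm 1$, thereby forcing the generating function to evaluate to $2^{n-1}$ at $q=-1$. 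This is consistent with the computed distributions $5+q$ at $n=3$ and $15+8q+q^2$ at $n=4$, whose evaluations at $-1$ are $4=2^2$ and $8=2^3$, matching the cycle descent number.
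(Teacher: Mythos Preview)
Your proposal is not a proof but a research plan, and the central step is left entirely open. After correctly reinterpreting Statistic~373 and giving the useful identity $\cdes(\sigma)=\#\{\text{deficiencies}\}-\#\{\text{nontrivial cycles}\}$, you acknowledge that no composition of standard maps transports one statistic to the other, and then propose a ``bespoke, likely recursive'' bijection built by peeling off extreme values---but you never construct it, nor do you indicate how the weak-excedance condition $\sigma_j\geq j$ (which is not stable under removing $1$ or $n$) would be controlled during the insertions. The fallback involution is likewise only named, not built; you would need an explicit pairing on the permutations that are not simultaneously $321$- and $312$-avoiding that changes Statistic~373 by exactly $\pm1$, and you give no candidate. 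So as written there is no argument to evaluate: both routes stop precisely at the hard step.

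For comparison, the paper does not prove this statement either---it is stated as a conjecture, verified only through $n=10$. The remark following the conjecture records that Bishal Deb subsequently proved it, and that his proof of this particular conjecture uses continued fractions. That is a genuinely different technology from either of your proposed approaches: rather than a direct bijection or a sign-reversing involution, one shows that the two statistics have the same Jacobi-type continued fraction expansion for their generating functions (in the spirit of the Foata--Zeilberger/Fran\c{c}on--Viennot framework already invoked for the Corteel map in Section~\ref{subsec:CorteelLaguerre}). If you want to pursue a combinatorial proof, the continued-fraction route suggests that the right bijection may go through labelled Motzkin paths rather than through recursive insertion of extreme values.
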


\begin{remark}
After the initial preprint version of this paper was posted, Bishal Deb proved both of these conjectures. The proof of the latter involves continued fractions. These will appear in forthcoming work.
\end{remark}


\section{Involutions with $2^{\lfloor\frac{n}{2}\rfloor}$ fixed points}
\label{sec:alex-k}
In this section, we prove instances of the CSP for involutions that have $2^{\lfloor\frac{n}{2}\rfloor}$ fixed points. We first give an example of such a map in Subsection \ref{subsec:ak} and  then prove CSP results for two  statistics related to partial extrema in Subsection \ref{subsec:ak_l2r} and another related to inversions in Subsection \ref{subsec:ak_inv}.

\subsection{The Alexandersson-Kebede map}\label{subsec:ak}
The Alexandersson-Kebede map $\kappa$ is an involution on permutations that preserves right-to-left minima. It was introduced in \cite{AlexanderssonKebede} to give a bijective proof of an identity regarding derangements and to refine it with respect to right-to-left minima. We give the definition, important properties, and an example below.

\begin{definition}
  Let $\sigma\in S_n$. The \textbf{Alexandersson-Kebede map} $\kappa$ is defined as follows. Let $i$ be the smallest odd integer such that $\sigma\cdot(i, i+1)$ and $\sigma$ have the same set of right-to-left minima, if such an $i$ exists. If it does not, then $\kappa(\sigma) = \sigma$. Otherwise, $\kappa(\sigma) = \sigma\cdot (i, i+1)$.
\end{definition}

\begin{prop}[Lemma 4.1.3 of \cite{AlexanderssonKebede}]\label{prop:AK_properties}
  The map $\kappa$ satisfies the following:
  \begin{itemize}
    \item $\kappa$ is an involution.
    \item $\kappa$ preserves the number of right-to-left minima.
    \item The fixed points of $\kappa$ are permutations satisfying $\{\sigma(i), \sigma(i+1)\} =\{i, i+1\}$ for all odd $i$. These are called \textit{decisive permutations} in \cite{AlexanderssonKebede}.
    \item There are  $2^{\lfloor \frac{n}{2}\rfloor}$ fixed points.
  \end{itemize}
\end{prop}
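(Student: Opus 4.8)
The plan is to reduce all four bullet points to a single \emph{local} criterion describing exactly when the adjacent swap used to define $\kappa$ preserves the set of right-to-left minima. Recall that $\sigma\cdot(i,i+1)$ is obtained from $\sigma$ by swapping the entries in positions $i$ and $i+1$. Write $a=\sigma_i$, $b=\sigma_{i+1}$, and $m=\min(\sigma_{i+2},\ldots,\sigma_n)$ (with $m=+\infty$ when $i+1=n$). The first observation is that this swap can only change the right-to-left-minimum status of the two entries $a,b$: entries in positions $>i+1$ see an unchanged suffix, and each entry in a position $<i$ compares itself against the minimum of a suffix whose underlying multiset is unaffected by transposing $a$ and $b$. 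Hence only the contribution of $\{a,b\}$ to the set of right-to-left minima can possibly change.

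First I would establish the key criterion by a short case analysis on the relative order of $a$, $b$, and $m$. If $\max(a,b)<m$ (both entries beat everything to their right), then the swap genuinely alters which of $a,b$ is a right-to-left minimum, so it is forbidden. In every other case -- exactly one of $a,b$ below $m$, or both above $m$ -- the set of right-to-left minima contributed by $\{a,b\}$ is the same before and after the swap (it is either $\{\min(a,b)\}$ or $\emptyset$), so the swap is allowed. Thus the swap at $(i,i+1)$ preserves the right-to-left minima if and only if at least one of $\sigma_i,\sigma_{i+1}$ exceeds $\min(\sigma_{i+2},\ldots,\sigma_n)$. Crucially, both this criterion and the allowed-swap outcome depend only on the \emph{unordered} pair $\{\sigma_i,\sigma_{i+1}\}$ and on the suffix multiset to the right of position $i+1$, so both are symmetric under the swap itself.

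With the criterion in hand the remaining bullets follow quickly. That $\kappa$ preserves the set (hence the number) of right-to-left minima is immediate, since $\kappa$ is either the identity or a single allowed swap. For the involution property, let $i$ be the smallest odd index admitting an allowed swap for $\sigma$ and set $\tau=\sigma\cdot(i,i+1)$; I would note that $\tau$ agrees with $\sigma$ in all positions $<i$ and has the same suffix multiset beyond position $i+1$, so the allowed/forbidden status of every odd swap $j\le i$ is identical for $\tau$ and $\sigma$ (using the symmetry at $j=i$). Therefore $i$ is again the smallest allowed odd index for $\tau$, and $\kappa(\tau)=\tau\cdot(i,i+1)=\sigma$; the case where no allowed swap exists is trivial.

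For the fixed points, $\sigma$ is fixed exactly when \emph{no} odd swap is allowed, i.e.\ when for every odd $i$ both $\sigma_i$ and $\sigma_{i+1}$ are smaller than $\min(\sigma_{i+2},\ldots,\sigma_n)$. Reading this condition from $i=1$ upward forces $\{\sigma_1,\sigma_2\}=\{1,2\}$, then $\{\sigma_3,\sigma_4\}=\{3,4\}$, and inductively $\{\sigma_{2k-1},\sigma_{2k}\}=\{2k-1,2k\}$, which is precisely the decisive-permutation condition; the converse is a direct check. Counting these, each of the $\lfloor n/2\rfloor$ consecutive pairs can be ordered in two ways (and, when $n$ is odd, position $n$ is forced to hold $n$), yielding $2^{\lfloor n/2\rfloor}$ fixed points. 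The main obstacle is pinning down the local criterion correctly -- in particular recognizing that ``right-to-left minima'' must be read as a set of \emph{values} rather than positions, since only under that reading do the allowed swaps and the decisive permutations match -- after which every bullet is a short deduction.
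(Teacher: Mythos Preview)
Your argument is correct. Note, however, that the paper does not supply its own proof of this proposition; it is quoted verbatim as Lemma~4.1.3 of \cite{AlexanderssonKebede}, so there is no in-paper proof to compare against. Your local criterion---that the swap at positions $(i,i+1)$ preserves the set of right-to-left-minimum \emph{values} if and only if $\max(\sigma_i,\sigma_{i+1})>\min(\sigma_{i+2},\ldots,\sigma_n)$---is exactly right and is confirmed by the paper's worked example (where $i=1,3$ are forbidden because both entries lie below the suffix minimum, while $i=5$ is allowed because $7>6$). The deductions of all four bullets from that criterion are sound, including the induction forcing $\{\sigma_{2k-1},\sigma_{2k}\}=\{2k-1,2k\}$ for fixed points; the subtlety you highlight about reading right-to-left minima as values rather than positions is indeed the correct interpretation, consistent with how the paper states the set $\{1,3,4,5,6\}$ in its example.
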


\begin{example}
  Consider the permutation $\sigma = 2134756$. Its right-to-left minima are $\{1,3,4,5,6\}$. To find $\kappa(\sigma)$, we need to find the smallest odd $i$ for which $\sigma\cdot(i, i+1)$ and $\sigma$ have the same right-to-left minima. We cannot choose $i=1$ since $\sigma\cdot(1,2) = 1234756$ makes $2$ a right-to-left minimum. We cannot choose $i=3$ since  $\sigma\cdot(3,4) = 2143756$ makes $4$ not a right-to-left minimum anymore. Then $i=5$ since $2134756$ and $2134576$ have the same set of right-to-left minima. Here, $\kappa(\sigma) = 2134576$.
\end{example}

\subsection{The sum of the numbers of left-to-right maxima and right-to-left minima}
\label{subsec:ak_l2r}
In this subsection, we use the Alexandersson-Kebede map $\kappa$ to show that some statistic generating functions related to partial extrema exhibit the CSP with respect to involutions with $2^{\lfloor\frac{n}{2}\rfloor}$ fixed points. 
We first need two lemmas.

\begin{lem}\label{lem:L2Rmax-non_fixed_points}
  When $\sigma\in S_n$ is not fixed by $\kappa,$ the number of left-to-right maxima for $\sigma$ and for $\kappa(\sigma)$ differ by one.
\end{lem}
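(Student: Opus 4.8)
The plan is to localize the entire analysis to the two swapped positions. Since $\kappa(\sigma) = \sigma\cdot(i,i+1)$ merely interchanges the entries in adjacent positions $i$ and $i+1$, only positions $i$ and $i+1$ can change their left-to-right-maximum status: for $j<i$ the prefix $\sigma_1\cdots\sigma_j$ is untouched, and for $j>i+1$ the set of entries in positions $1,\dots,j-1$ is unchanged by the swap, so the running prefix maximum seen by position $j$ agrees in $\sigma$ and $\kappa(\sigma)$. Here $i$ denotes the smallest odd index at which the transposition preserves the set of right-to-left minima, which exists because $\sigma$ is not fixed by $\kappa$.

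First I would pin down which adjacent transpositions are allowed. Writing $a=\sigma_i$, $b=\sigma_{i+1}$ and $m=\min\{\sigma_{i+2},\dots,\sigma_n\}$ (with $m=+\infty$ when the tail is empty), a short case analysis of the right-to-left-minimum status of positions $i$ and $i+1$ shows that swapping $a$ and $b$ preserves the set of right-to-left minima if and only if $\max(a,b)>m$; equivalently, the swap is \emph{forbidden} precisely when both $a$ and $b$ are smaller than every entry to the right of position $i+1$.

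The key structural step is to exploit the minimality of $i$. By definition of $\kappa$, every odd index $i'<i$ yields a forbidden transposition, so $\sigma_{i'}$ and $\sigma_{i'+1}$ are both smaller than all entries in positions $i'+2,\dots,n$. Applying this in turn for $i'=1,3,\dots,i-2$ forces $\{\sigma_1,\sigma_2\}=\{1,2\}$, then $\{\sigma_3,\sigma_4\}=\{3,4\}$, and inductively $\{\sigma_{2k-1},\sigma_{2k}\}=\{2k-1,2k\}$. Hence the entries in positions $1,\dots,i-1$ are exactly $\{1,\dots,i-1\}$, their maximum is $M=i-1$, and both $\sigma_i,\sigma_{i+1}\geq i>M$.

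Finally I would conclude the count. Since $a,b>M$, position $i$ holds an entry exceeding the whole prefix in either permutation, so it is a left-to-right maximum of both $\sigma$ and $\kappa(\sigma)$. Position $i+1$ is a left-to-right maximum exactly when its entry exceeds both $M$ and the entry in position $i$; as both entries already exceed $M$, this reduces to the entry in position $i+1$ being the larger of $a$ and $b$. Thus position $i+1$ is a left-to-right maximum of $\sigma$ iff $b>a$ and of $\kappa(\sigma)$ iff $a>b$, and exactly one of these holds. Consequently position $i+1$ contributes a left-to-right maximum to exactly one of the two permutations, so the two counts differ by exactly one. I expect the main obstacle to be the structural step in the third paragraph: a purely local analysis at positions $i,i+1$ does not by itself yield a $\pm1$ change (were both entries below the prefix maximum, the count would be unchanged), so the argument genuinely requires the observation that minimality of $i$ forces positions $1,\dots,i-1$ to hold precisely the values $1,\dots,i-1$.
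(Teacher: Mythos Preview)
Your proof is correct and follows essentially the same route as the paper's. Both arguments use the minimality of $i$ to show that the entries in positions $1,\dots,i-1$ are all smaller than $\sigma_i$ and $\sigma_{i+1}$, so that position $i$ is a left-to-right maximum in both $\sigma$ and $\kappa(\sigma)$ while position $i+1$ is a left-to-right maximum in exactly one of them. Your version is somewhat more explicit: you give a clean characterization of which adjacent swaps preserve the right-to-left minima (namely $\max(a,b)>m$), and you push the structural step further to deduce that positions $1,\dots,i-1$ hold precisely the values $1,\dots,i-1$, whereas the paper only states that $\sigma_i,\sigma_{i+1}$ exceed all earlier entries (the weaker statement suffices, and your stronger one is correct).
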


\begin{proof}
  Let $\sigma\in S_n$ so that $\kappa$ does not fix $\sigma.$ Then there exists a smallest odd integer $i$ so that the right-to-left minima of $\sigma$ are equal to the right-to-left minima of $\kappa(\sigma).$ Note that for all odd integers $j<i,$ the sets of right-to-left minima for $\sigma(j~j+1)$ and $\sigma$ are different. Thus, either $\sigma(j),\sigma(j+1),$ or both are right-to-left minima for all odd integers $j<i,$ which means $\sigma(i)$ and $\sigma(i+1)$ are greater than $\sigma(j)$ and $\sigma(j+1)$ for all odd $j<i.$
  Then either $\sigma(i)>\sigma(i+1)$ or $\sigma(i)<\sigma(i+1).$ If $\sigma(i)>\sigma(i+1),$ then only $\sigma(i)$ is a left-to-right maximum. If $\sigma(i)<\sigma(i+1),$ then both $\sigma(i)$ and $\sigma(i+1)$ are left-to-right maxima. So, under the action of $\kappa$ we have $\sigma(i~i+1)$, meaning that over one orbit, the number of left-to-right maxima differs by one.
\end{proof}

\begin{lem}\label{lem:R2Lmin=L2Rmax_fixed_points}
  The number of left-to-right maxima and the number of right-to-left minima are equal for permutations fixed under $\kappa$.
\end{lem}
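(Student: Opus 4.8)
The plan is to prove the lemma directly from the explicit description of the fixed points of $\kappa$ given in Proposition~\ref{prop:AK_properties}, namely the decisive permutations satisfying $\{\sigma(i),\sigma(i+1)\}=\{i,i+1\}$ for every odd $i$. The crucial structural observation is that such a $\sigma$ decomposes into consecutive blocks: for each odd $i$ with $i+1\le n$, the positions $(i,i+1)$ carry exactly the values $\{i,i+1\}$, and when $n$ is odd the final position $n$ is forced to carry the value $n$. Consequently every value occurring in an earlier block is strictly smaller than every value occurring in a later block. This is the key point, because it reduces the two global extremal conditions to purely local ones: an entry automatically dominates all entries in earlier blocks and is dominated by all entries in later blocks, so whether it is a left-to-right maximum or a right-to-left minimum is determined entirely by its position \emph{within its own block}.

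First I would record the block decomposition and the across-block monotonicity of values. Then I would analyze a single size-two block occupying positions $2m-1,2m$ with values $\{2m-1,2m\}$, splitting into two cases. If the block is ascending (so $\sigma(2m-1)=2m-1$ and $\sigma(2m)=2m$), then both entries are left-to-right maxima and both are right-to-left minima, contributing $2$ to each count. If the block is descending (so $\sigma(2m-1)=2m$ and $\sigma(2m)=2m-1$), then only $\sigma(2m-1)=2m$ is a left-to-right maximum and only $\sigma(2m)=2m-1$ is a right-to-left minimum, contributing $1$ to each count. For odd $n$, the singleton block at position $n$ contributes the value $n$, which is simultaneously a left-to-right maximum (the global maximum) and a right-to-left minimum (it occupies the last position), again contributing $1$ to each. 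Summing the per-block contributions yields equal totals, completing the proof.

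The argument has no deep obstacle; the only care needed is in the boundary behavior of the case analysis — verifying that the reasoning for right-to-left minima of an ascending block still holds for the last block, that the first block's left-to-right maxima are counted correctly, and that the ascending/descending dichotomy governs both statistics in exactly parallel fashion. An alternative, slicker-looking route is to invoke the reverse-complement map $\R\circ\C$, which in general sends left-to-right maxima of $\sigma$ to right-to-left minima of $\R\circ\C(\sigma)$; but turning this into a statement about the single permutation $\sigma$ still requires checking that $\R\circ\C$ preserves decisiveness and preserves the right-to-left-minima count (a property that fails for general permutations and holds here only because of the block structure). I therefore expect the direct block analysis to be the cleaner path.
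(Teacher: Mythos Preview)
Your proposal is correct and follows essentially the same approach as the paper: both arguments use the block decomposition of decisive permutations and analyze each size-two block according to whether it is ascending (contributing $2$ to each count) or descending (contributing $1$ to each count). You are slightly more careful than the paper in explicitly treating the singleton block at position $n$ when $n$ is odd.
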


\begin{proof}
  Recall from Proposition \ref{prop:AK_properties} that the fixed points of $\kappa$ are permutations satisfying $\{\sigma(i), \sigma(i+1)\} =\{i, i+1\}$ for all odd $i$. We prove that, for each pair $\{i, i+1\}$ with odd $i$, either only one of them is a left-to-right maximum and the other one is a right-to-left minimum, or that both of them are left-to-right maxima and right-to-left minima.

  For a permutation $\sigma$ fixed under $\kappa$ and a pair $\{i, i+1\}$ with odd $i$, we know that all entries to the left of $i$ and $i+1$ are smaller, and that all entries to the right of $i$ and $i+1$ are larger. Hence, there is at least one left-to-right maximum and one right-to-left minimum among $\{i, i+1\}$.

  If $\sigma(i) = i$ (and $\sigma(i+1) = i+1$), there are two left-to-right maxima and two right-to-left minima. If $\sigma(i+1) = i$ (and $\sigma(i) = i+1$), then only $\sigma(i+1)$ is a right-to-left minimum, and $\sigma(i)$ is a left-to-right maximum. Either way, the number of left-to-right maxima is the same as the number of right-to-left minima for permutations that are fixed under $\kappa$.
\end{proof}

Using the two properties above, we prove the CSP for two statistics with respect to the Alexandersson-Kebede map, only one of which appears in FindStat. From it, it follows that the CSP occurs for any map with the same orbit structure.

\begin{thm}\label{thm:CSP_AK}
  The statistic given as the sum of the numbers of left-to-right maxima and right-to-left minima exhibits the cyclic sieving phenomenon with respect to any involution with $2^{\lfloor\frac{n}{2}\rfloor}$ fixed points.
\end{thm}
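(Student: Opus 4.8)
The plan is to exploit the fact that any involution has order $2$, so by Stembridge's $q=-1$ phenomenon (discussed in Subsection~\ref{sec:bg_CSP}), establishing the CSP reduces to two evaluations of the statistic generating function $f(q) = \sum_{\sigma \in S_n} q^{\mathrm{stat}(\sigma)}$, where $\mathrm{stat}(\sigma)$ is the sum of the numbers of left-to-right maxima and right-to-left minima. Writing $\zeta = e^{2\pi i/2} = -1$, the two powers to check are $d=0$ and $d=1$. The evaluation $f(\zeta^0) = f(1) = n!$ is immediate from Definition~\ref{def:stat_gen_fun} and equals the number of permutations fixed under the identity, so the $d=0$ case is automatic. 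All the content lies in showing $f(-1) = 2^{\lfloor n/2\rfloor}$, which must equal the number of fixed points of the involution for the $d=1$ case.

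First I would compute $f(-1)$ using the Alexandersson--Kebede map $\kappa$ as a bookkeeping device, pairing each non-fixed permutation $\sigma$ with $\kappa(\sigma)$. The key observation is that $\mathrm{stat}(\sigma)$ and $\mathrm{stat}(\kappa(\sigma))$ differ in parity on each orbit of size $2$: by Proposition~\ref{prop:AK_properties}, $\kappa$ preserves the number of right-to-left minima, while Lemma~\ref{lem:L2Rmax-non_fixed_points} shows the number of left-to-right maxima changes by exactly one when $\sigma$ is not fixed. Hence $\mathrm{stat}$ changes by exactly $\pm 1$ across each $2$-element orbit, so the two contributions $(-1)^{\mathrm{stat}(\sigma)}$ and $(-1)^{\mathrm{stat}(\kappa(\sigma))}$ cancel in the sum $f(-1) = \sum_{\sigma \in S_n} (-1)^{\mathrm{stat}(\sigma)}$.

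Next I would handle the fixed points of $\kappa$. By Lemma~\ref{lem:R2Lmin=L2Rmax_fixed_points}, the numbers of left-to-right maxima and right-to-left minima coincide for any $\kappa$-fixed permutation, so $\mathrm{stat}(\sigma)$ is twice the number of left-to-right maxima, hence even. Each fixed point therefore contributes $(-1)^{\mathrm{stat}(\sigma)} = +1$. Combining the two cases, every $2$-orbit contributes $0$ and every fixed point contributes $1$, yielding $f(-1) = 2^{\lfloor n/2\rfloor}$ by the fixed-point count of $\kappa$ recorded in Proposition~\ref{prop:AK_properties}.

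Finally, I would note the promotion from $\kappa$ to an arbitrary involution with $2^{\lfloor n/2\rfloor}$ fixed points. The crucial point is that the value $f(-1) = 2^{\lfloor n/2\rfloor}$ depends only on the statistic, not on the map used to compute it; $\kappa$ served merely as a convenient pairing. For any involution $g$ with exactly $2^{\lfloor n/2\rfloor}$ fixed points, the $d=1$ evaluation $f(-1)$ equals the number of fixed points of $g$, and the $d=0$ evaluation is $n!$ as above, so the triple exhibits the CSP. The only conceptual subtlety—rather than any computational obstacle—is recognizing this separation between using $\kappa$ to evaluate the polynomial and stating the theorem for all involutions sharing its orbit structure; the parity-matching argument for non-fixed orbits (right-to-left minima preserved, left-to-right maxima shifted by one) is the technical heart, and it is already supplied by the two preceding lemmas.
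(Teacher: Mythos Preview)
Your proposal is correct and follows essentially the same approach as the paper: use $\kappa$ together with Proposition~\ref{prop:AK_properties} and Lemmas~\ref{lem:L2Rmax-non_fixed_points} and~\ref{lem:R2Lmin=L2Rmax_fixed_points} to show the non-fixed orbits cancel in $f(-1)$ while each fixed point contributes $+1$, yielding $f(-1)=2^{\lfloor n/2\rfloor}$, and then transfer the conclusion to any involution with that many fixed points. The paper's proof is organized identically, including the explicit two-part split of the sum over $S_n$.
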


\begin{proof}
  Since $\kappa$ is an involution, we only need to show that the statistic generating function evaluated at $q=-1$ gives the number of fixed points. By Proposition \ref{prop:AK_properties}, this is $2^{\lfloor \frac{n}{2}\rfloor}$.

  Let $\Stat$ be the statistic counting the sum of the numbers of right-to-left minima and left-to-right maxima. Then, we need to show that
  \[\sum_{\sigma \in S_n} (-1)^{\Stat(\sigma)} = 2^{\lfloor \frac{n}{2}\rfloor}.\]

  We break the sum above into two parts, to account for the fixed points separately:
  \begin{equation}
    \sum_{\sigma \in S_n} (-1)^{\Stat(\sigma)} = \sum_{\substack{\sigma \in S_n\\ \kappa(\sigma) = \sigma}} (-1)^{\Stat(\sigma)} + \sum_{\substack{\sigma \in S_n\\ \kappa(\sigma) \neq \sigma}} (-1)^{\Stat(\sigma)}.\label{eq:sum_CSP_AK}
  \end{equation}

  We know from Proposition \ref{prop:AK_properties} and Lemma \ref{lem:L2Rmax-non_fixed_points} that
  \[\sum_{\substack{\sigma \in S_n\\ \kappa(\sigma) \neq \sigma}} (-1)^{\Stat(\sigma)} =\sum_{\text{orbits of }\kappa} ((-1) + 1) = 0.\]

  Furthermore, when we write $\text{L2R}$ and $\text{R2L}$ for left-to-right and right-to-left, respectively, Lemma \ref{lem:R2Lmin=L2Rmax_fixed_points} tells us that
  \[\sum_{\substack{\sigma \in S_n\\ \kappa(\sigma) = \sigma}} (-1)^{\#\text{L2R maxima}(\sigma) + \#\text{R2L minima}(\sigma)}=\sum_{\substack{\sigma \in S_n\\ \kappa(\sigma) = \sigma}} \underbrace{(-1)^{2\#\text{L2R maxima}(\sigma)}}_{1} = \#\{\sigma \in S_n\mid \sigma = \kappa(\sigma)\} = 2^{\lfloor \frac{n}{2}\rfloor}.\]

  Hence, Equation \eqref{eq:sum_CSP_AK} becomes
  \[\sum_{\substack{\sigma \in S_n\\ \kappa(\sigma) = \sigma}} (-1)^{\Stat(\sigma)} + \sum_{\substack{\sigma \in S_n\\ \kappa(\sigma) \neq \sigma}} (-1)^{\Stat(\sigma)} =  2^{\lfloor\frac{n}{2}\rfloor} + 0,\]
  which is exactly the number of fixed points under $\kappa$. This shows that the map exhibits the cyclic sieving phenomenon for the sum of the numbers of left-to-right maxima and right-to-left minima. Therefore, any involution with $2^{\lfloor\frac{n}{2}\rfloor}$ fixed point exhibits the CSP for that statistic.
\end{proof}

\begin{cor}
  \label{cor:1005}
  The number of indices of a permutation that are either left-to-right maxima or right-to-left minima but not both (Statistic $1005$) exhibits the cyclic sieving phenomenon with respect to any involution with $2^{\lfloor\frac{n}{2}\rfloor}$ fixed points.
\end{cor}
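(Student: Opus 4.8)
The plan is to reduce this directly to Theorem~\ref{thm:CSP_AK} by a parity argument, using the fact that the two statistics differ by an even quantity. First I would fix notation: for $\sigma \in S_n$, let $L(\sigma)$ be the number of left-to-right maxima, $R(\sigma)$ the number of right-to-left minima, and $B(\sigma)$ the number of indices that are simultaneously both. Viewing these as sets $A$ (left-to-right maxima) and $B'$ (right-to-left minima) of indices, Statistic $1005$ counts $|A \triangle B'|$, so the standard identity for the symmetric difference gives
\[
\mathrm{Stat}1005(\sigma) = L(\sigma) + R(\sigma) - 2B(\sigma).
\]
I would justify this by checking the four possible classifications of a single index (both, only a maximum, only a minimum, neither) and summing; note this identity holds for every $\sigma$ and does not require $\sigma$ to be fixed by $\kappa$.

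The key step is the observation that the correction term $2B(\sigma)$ is even, so $\mathrm{Stat}1005(\sigma) \equiv L(\sigma) + R(\sigma) \pmod 2$ and hence $(-1)^{\mathrm{Stat}1005(\sigma)} = (-1)^{L(\sigma)+R(\sigma)}$ for all $\sigma$. Summing over $S_n$ shows that the two statistic generating functions agree after evaluation at $q = -1$:
\[
\sum_{\sigma \in S_n} (-1)^{\mathrm{Stat}1005(\sigma)} = \sum_{\sigma \in S_n} (-1)^{L(\sigma)+R(\sigma)}.
\]
The right-hand side is exactly the $q = -1$ evaluation of the generating function for the sum of the numbers of left-to-right maxima and right-to-left minima, which the proof of Theorem~\ref{thm:CSP_AK} shows equals $2^{\lfloor\frac{n}{2}\rfloor}$.

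Finally I would invoke the $q = -1$ phenomenon for order-$2$ actions. Any involution with $2^{\lfloor\frac{n}{2}\rfloor}$ fixed points has order $2$, so to establish the CSP it suffices to check the two required root-of-unity evaluations: $f(1) = |S_n| = n!$, which is immediate from Definition~\ref{def:stat_gen_fun}, and $f(-1) = 2^{\lfloor\frac{n}{2}\rfloor}$, the number of fixed points, which we have just verified. I do not expect a genuine obstacle here; the entire content lies in recognizing that Statistic $1005$ is congruent modulo $2$ to the sum statistic of Theorem~\ref{thm:CSP_AK}. The only point demanding mild care is confirming the identity $\mathrm{Stat}1005 = L + R - 2B$ by the four-case verification, but this is routine and the parity collapse then does all the work.
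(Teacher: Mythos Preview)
Your proposal is correct and matches the paper's own proof essentially verbatim: both observe that Statistic~1005 equals the sum $L(\sigma)+R(\sigma)$ minus twice the number of indices that are simultaneously left-to-right maxima and right-to-left minima, hence has the same parity as the sum statistic, so the $q=-1$ evaluation agrees with that computed in Theorem~\ref{thm:CSP_AK}. Your write-up is slightly more explicit about the symmetric-difference identity, but the argument is the same.
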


\begin{proof}
  This follows from Theorem \ref{thm:CSP_AK}. Since the map is an involution, we prove the cyclic sieving phenomenon by evaluating the statistic generating function at $q=-1$. This means that only the parity of the statistic matters. To calculate the number of indices for a permutation that are either left-to-right maxima or right-to-left minima but not both, we subtract an even number from the sum of the numbers of left-to-right maxima and right-to-left minima, meaning that the statistics, for each permutation, always have the same parity. Thus, the cyclic sieving phenomenon also holds for the number of indices for a permutation that are either left-to-right maxima or right-to-left minima but not both with respect to any involution with $2^{\lfloor\frac{n}{2}\rfloor}$ fixed points.
\end{proof}

\subsection{The number of invisible inversions} 
\label{subsec:ak_inv}
In this subsection, we prove the CSP for the number of invisible inversions (Statistic 1727)  with respect to involutions with $2^{\lfloor\frac{n}{2}\rfloor}$ fixed points.

\begin{definition}
  An inversion $(i,j)$ of $\sigma$ is said to be \emph{invisible} if it satisfies $\sigma(i)>\sigma(j)>i$.
\end{definition}

\begin{thm}\label{thm:CSP_AK-Invisible}
  The number of invisible inversions (Statistic $1727$) exhibits the cyclic sieving phenomenon with respect to involutions with $2^{\lfloor\frac{n}{2}\rfloor}$ fixed points.
\end{thm}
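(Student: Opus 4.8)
The plan is to invoke the $q=-1$ phenomenon: since the Alexandersson--Kebede map $\kappa$ is an involution with $2^{\lfloor n/2\rfloor}$ fixed points (Proposition~\ref{prop:AK_properties}), it suffices to prove that $\sum_{\sigma\in S_n}(-1)^{N(\sigma)}=2^{\lfloor n/2\rfloor}$, where $N(\sigma)$ denotes the number of invisible inversions of $\sigma$. As in the proof of Theorem~\ref{thm:CSP_AK}, I would split the sum over the fixed points of $\kappa$ and its size-two orbits, and show that each fixed point contributes $+1$ while the two members of each size-two orbit contribute $+1$ and $-1$ and so cancel. The fixed-point count is immediate: the fixed points are the decisive permutations, with $\{\sigma(i),\sigma(i+1)\}=\{i,i+1\}$ for every odd $i$, so $\sigma$ maps each block $\{2k-1,2k\}$ to itself and its only inversions are the within-block pairs $(2k-1,2k)$ with $\sigma(2k-1)=2k$, $\sigma(2k)=2k-1$. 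Such a pair fails the invisibility condition $\sigma(i)>\sigma(j)>i$ because $\sigma(2k)=2k-1\not>2k-1$, so every fixed point has $N(\sigma)=0$ and contributes $(-1)^0=+1$, for the required total of $2^{\lfloor n/2\rfloor}$.

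The substance lies in the size-two orbits, where I must show $\kappa$ changes $N$ by an odd amount. Writing $\kappa(\sigma)=\sigma\cdot(i,i+1)$ and $a=\sigma(i)$, $b=\sigma(i+1)$, the first step is to characterize the selected index $i$. Swapping two adjacent values leaves unchanged the multiset of values lying to the right of any earlier position, so only positions $i$ and $i+1$ can change right-to-left-minimum status; a short case analysis on the relative order of $a$, $b$, and $R:=\min_{m>i+1}\sigma(m)$ shows that the swap preserves the set of right-to-left minima if and only if $\max(a,b)>R$. Because $i$ is the \emph{smallest} odd index with this property, every earlier block must be decisive, which forces $\{\sigma(1),\dots,\sigma(i-1)\}=\{1,\dots,i-1\}$ and hence $a\ge i$ and $b\ge i$. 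This prefix inequality is the crucial structural input.

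With $a,b\ge i$ in hand I would compute the change $\Delta=N(\kappa(\sigma))-N(\sigma)$ locally, grouping the affected inversion pairs by their third index. Pairs $(k,i)$ and $(k,i+1)$ with $k<i$ cancel in aggregate, since swapping $a$ and $b$ merely permutes the two summands $[\sigma(k)>a>k]+[\sigma(k)>b>k]$. The net change therefore comes only from the pair $(i,i+1)$, contributing $[b>a>i]-[a>b>i]$, and from pairs $(i,l),(i+1,l)$ with $l>i+1$, which telescope to $[\sigma^{-1}(i+1)>i+1]\bigl([b>i+1]-[a>i+1]\bigr)$. Combining these with $a,b\ge i$ and tracking where the value $i+1$ sits (at position $i$, at position $i+1$, or strictly to the right) reduces $\Delta$ to exactly $\pm1$ in every case; the degenerate possibility $\{a,b\}=\{i,i+1\}$ never occurs, since it would give $\max(a,b)<R$ and violate the selection criterion.

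The main obstacle is precisely this final bookkeeping: the expression for $\Delta$ is not manifestly $\pm1$ until one imports both structural facts — the selection criterion $\max(a,b)>R$ and the resulting prefix condition $a,b\ge i$ — so the argument hinges on establishing these first and then dispatching the handful of cases for the location of the value $i+1$. Once $\Delta=\pm1$ is confirmed, each size-two orbit contributes $(-1)^{t}+(-1)^{t\pm1}=0$, the sum collapses to the fixed-point total $2^{\lfloor n/2\rfloor}$, and the $q=-1$ phenomenon yields the CSP for any involution with $2^{\lfloor n/2\rfloor}$ fixed points.
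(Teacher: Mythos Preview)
Your argument is correct, and it takes a genuinely different route from the paper. Both proofs follow the same template --- exhibit an involution with $2^{\lfloor n/2\rfloor}$ fixed points, check that fixed points have no invisible inversions, and verify that the statistic changes parity across each two-element orbit --- but they use different involutions. The paper does \emph{not} use $\kappa$ for this step; instead it builds a new involution $\psi$ that swaps the \emph{values} $i$ and $i+1$ for the \emph{maximal} $i$ (of appropriate parity) not lying in its designated block, and then checks that this swap changes the number of invisible inversions by exactly one. You, by contrast, stay with the Alexandersson--Kebede map $\kappa$ itself (which swaps \emph{positions} $i,i+1$ at the \emph{minimal} admissible odd $i$), derive the structural consequence $\{\sigma(1),\dots,\sigma(i-1)\}=\{1,\dots,i-1\}$ from the minimality of $i$, and use it to reduce the local change $\Delta$ to $\pm1$ via the case split on the location of the value $i+1$.

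Your approach has the virtue of economy: it reuses the map already introduced in the section rather than constructing a bespoke involution, and the prefix property you extract from minimality is exactly what makes the computation of $\Delta$ tractable. The paper's choice of $\psi$ trades that structural lemma for a simpler local analysis --- swapping values rather than positions and taking the maximal index pushes all relevant larger values into their correct blocks, so the verification that only one invisible inversion is created or destroyed is somewhat more direct. Either way the bookkeeping is of comparable length; your case analysis (with the observation that $\{a,b\}=\{i,i+1\}$ is excluded by $\max(a,b)>R$, hence $i=n-1$ is never selected when $n$ is even) is sound.
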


\begin{proof}
  To show that the number of invisible inversions exhibits the CSP, we define a new involution $\psi: S_n \rightarrow S_n$ with $2^{\lfloor\frac{n}{2} \rfloor}$ fixed points; see Example~\ref{ex:kappa_psi}. We show the fixed points of $\psi$ have no invisible inversions, and orbits of size $2$ are made of two permutations whose number of invisible inversions differs by $1$. 

Define $\psi: S_n \rightarrow S_n$ as follows.

\begin{itemize}

    \item If $n$ is even, section the permutation into blocks of size $2$ by grouping the spots $i$ and $i + 1$ with $i$ odd. If there is some pair of values $(i, i+1)$ with $i$ odd where $i$ and $i + 1$ are not both in the $\frac{i+1}{2}$-th block, pick $i$ to be maximal and define $\psi(\sigma)$ to be the involution swapping the values $i$ and $i + 1$.

    \item If $n$ is odd, place $1$ in its own block and section the remainder of the permutation into blocks of size $2$ by grouping spots $i$ and $i + 1$ with $i$ even.  If there is some pair of values $(i, i+1)$ with $i$ even where $i$ and $i + 1$ are not both in the $\left(\frac{i}{2} + 1\right)$-th block, pick $i$ to be maximal and define $\psi(\sigma)$ by swapping the values $i$ and $i + 1$.

\end{itemize}
The fixed points are the permutations of $[n]$ where $i$ and $i+1$ are both in the $\lceil \frac{i+1}{2}\rceil$-th block for all $i$, where $i$ and $n$ have different parities. The number of such permutations is $2^{\lfloor\frac{n}{2} \rfloor}$, since ${\lfloor\frac{n}{2} \rfloor}$ is the number of blocks of size $2$.
  
  Let $\sigma$ be a permutation not fixed by $\psi$ and let $(i, i + 1)$ be the swapped pair. We show that swapping $i$ and $i + 1$ either breaks or creates an invisible inversion with those values. Consider $j, k$ such that $1 \leq j, k \leq n$, $\sigma_j = i$, $\sigma_k = i + 1$. Then $\{j , k\} \neq \{i, i+1 \}$. As $i$ is chosen to be maximal, we have $j, k \leq i+ 1$ since every value greater than $i + 1$ must already be in its assigned block to the right of spot $i + 1$. Similarly, for all $t \leq i + 1$, we have $\sigma_t \leq i + 1$.

  If $j < k$, then $\sigma_ j = i$ cannot be in its block, so $j < i$. Thus, $\psi(\sigma)_j = \sigma_k = i + 1 > \psi(\sigma)_k = \sigma_j = i > j,$ so $(j, k)$ is an invisible inversion of $\psi(\sigma)$ but not in $\sigma$ because $\sigma_j = i < \sigma_k = i + 1$. If $k < j$, then $\sigma_k = i + 1$ cannot be in its block, so $k < i$. Thus, $\sigma_k = i + 1 > \sigma_j = i > k$, so $(k , j)$ is an invisible inversion of $\sigma$ but not in $\psi(\sigma)$ since $\psi(\sigma)_k = \sigma_j = i < \psi(\sigma)_j = \sigma_k = i +1$.

  Additionally, swapping these values does not create or break any other invisible inversions. First, note that any invisible inversion not involving spots $j$ and $k$ is unaffected by this map, so we only need to consider invisible inversions involving either spot $j$ or spot $k$.  Let $\sigma_t \notin \{i, i+1\}$. Recall that if $t \leq i + 1$, then $\sigma_t \leq i + 1$, so since $\sigma_t \notin \{i, i + 1\}$, if $t \leq i + 1$, then $\sigma_t < i$.
  Hence there are no inversions of the form $(t, j)$ or $(t, k)$. Thus, there are two cases to check.

  \begin{itemize}

    \item If $(j, t)$ is an invisible inversion in $\sigma$, then $j < t$ and $\sigma_j = i > \sigma_t > j$. Then $\psi(\sigma)_j = i + 1 > \sigma_t > j$ and $(j, t)$ is an invisible inversion in $\psi(\sigma)$. Similarly, if $(j, t)$ is an invisible inversion in $\psi(\sigma)$, then $\psi(\sigma)_j = i + 1 > \psi(\sigma)_t = \sigma_t > j$ and as $\sigma_t \neq i$, $\sigma_j = i > \sigma_t > j$ and $(j, t)$ is an invisible inversion in $\sigma$.

    \item If $(k, t)$ is an invisible inversion in $\sigma$, then $k < t$ and $\sigma_k = i + 1 > \sigma_t > k$. As $\sigma_t \neq i$, then $\psi(\sigma)_k = i > \sigma_t > k$ and $(k, t)$ is an invisible inversion in $\psi(\sigma)$. Similarly, if $(k, t)$ is an invisible inversion in $\psi(\sigma)$, then $\psi(\sigma)_k = \sigma_j = i  > \psi(\sigma)_t = \sigma_t > k$, so $\sigma_k = i + 1 > \sigma_t > k$ and $(k, t)$ is an invisible inversion in $\sigma$.

\end{itemize}
Therefore, the number of invisible inversions in $\sigma$ and $\psi(\sigma)$ differs by $1$ when $\sigma$ is not a fixed point of $\psi$. Since fixed points of $\psi$ have no invisible inversions, the argument used in the proof of Theorem \ref{thm:CSP_AK} is used to prove that involutions with $2^{\lfloor \frac{n}{2}\rfloor}$ fixed points exhibit the CSP for the number of invisible inversions.
\end{proof}

\begin{example}
  \label{ex:kappa_psi}
  Let $n = 8$. Then $\sigma = 21534687$ maps to $\psi(\sigma) = 21634587$.  Since $7$ and $8$ are already in the correct block, we switched $5$ and $6$. There is one invisible inversion in $\sigma$, $(3,5)$, and two invisible inversions in $\psi(\sigma)$, $(3, 5)$ and $(3, 6)$. In this case, $21345687$ is a fixed point.

  Next, let $n = 9$. Then $\sigma = 215346879$ maps to $\psi(\sigma) = 215346978$.  There is one invisible inversion in $\sigma$, $(3, 5)$, and two invisible inversions in $\psi(\sigma)$, $(3, 5)$ and $(7, 9)$. In this case, $215346879$ is not a fixed point, but $132456789$ is a fixed point.

\end{example}

\section{Involutions without fixed points}

\label{sec:rev_comp}

In this section, we prove instances of CSP for involutions that have no fixed points, that is, all bijections on permutations that only produce orbits of size $2$.
This includes the reverse and complement maps (Definition \ref{def:maps}). As noted in Subsection \ref{sec:bg_CSP}, studying involutions means that the results in this section can also be characterized as instances of the $q=-1$ phenomenon. So for involutions without fixed points and statistics where we have an explicit generating function $f(q)$, we need only check that $f(-1)=0$ to verify the CSP.

We organize our results based on proof technique; the first subsection uses the generating function for the statistic, while the second uses bijective proofs that pair the permutations to show the number whose statistics value is odd  equals the number whose statistic value is even.

\subsection{Factoring the generating functions}
We begin with results that make use of an explicit formula for the generating function of a statistic. The main result of this subsection is \Cref{circled_shifted}, related to the number of circled entries of the shifted recording tableau, but we also include other easier results for the sake of example and comprehensiveness.

We begin with the following proposition and its corollaries. 

\begin{prop}\label{thm:rev_comp_cyc_decomp}
  For $n \geq 2$, 
  the number of cycles in the cycle decomposition (Statistic $31$) exhibits the cyclic sieving phenomenon under involutions without fixed points.
\end{prop}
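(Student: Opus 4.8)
The plan is to exploit the reduction emphasized in Subsection~\ref{sec:bg_CSP}: for an order-$2$ action the CSP coincides with Stembridge's $q=-1$ phenomenon. Since every statistic generating function automatically satisfies $f(1)=n!$, and an involution without fixed points has exactly $0$ fixed points, the entire content of the proposition is to verify that the generating function $f(q)=\sum_{\sigma\in S_n}q^{c(\sigma)}$ for the number of cycles $c(\sigma)$ satisfies $f(-1)=0$. Once this single evaluation is checked, the CSP holds for \emph{any} involution without fixed points, not just reverse and complement, since the phenomenon for an order-$2$ map depends only on the orbit structure.

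To get a handle on $f(q)$, I would first establish the closed form
\[
f_n(q)=\sum_{\sigma\in S_n}q^{c(\sigma)}=\prod_{k=0}^{n-1}(q+k),
\]
whose coefficients are the signless Stirling numbers of the first kind. The cleanest self-contained derivation is recursive, tracking the element $n$: a permutation of $[n]$ either has $n$ as its own $1$-cycle, contributing a factor $q$ to an otherwise arbitrary permutation of $[n-1]$, or inserts $n$ immediately before one of the $n-1$ other elements within an existing cycle, leaving the cycle count unchanged. This gives $f_n(q)=(q+(n-1))\,f_{n-1}(q)$ with base case $f_1(q)=q$, and hence the product formula.

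The final step is then immediate: for $n\geq 2$ the index range $0\leq k\leq n-1$ includes $k=1$, so the product contains the factor $(q+1)$, which vanishes at $q=-1$. Therefore $f_n(-1)=0$, matching the fixed-point count of any involution without fixed points, and the desired CSP follows.

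I do not anticipate a genuine obstacle here; the argument is routine once the closed form is in hand. The only point requiring a little care is the recursive bookkeeping—confirming that the ``insert $n$ before an existing entry'' operation accounts for each permutation of $[n]$ in which $n$ is not a fixed point exactly once, while never altering the number of cycles—but this is standard and could equally be replaced by a direct appeal to the well-known Stirling-number generating function.
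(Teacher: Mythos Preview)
Your proposal is correct and matches the paper's approach exactly: the paper likewise cites the closed form $f_n(q)=q\prod_{k=1}^{n-1}(q+k)$ (equivalently your $\prod_{k=0}^{n-1}(q+k)$), derived by the same recursion on the placement of $n$, and then observes that the factor $(q+1)$ forces $f_n(-1)=0$.
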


The proof of Proposition \ref{thm:rev_comp_cyc_decomp} is clear, as \cite{Stanley2011} gives the generating fuction $f_n(q) = q \prod_{k=1}^{n-1} (q+k)$ so that $f_n(-1)=0$. 
We then have the following corollary, since the number of cycles is equidistributed with left-to-right maxima~\cite[Prop.~1.3.1]{Stanley2011} and the other statistics by symmetry.

\begin{cor}\label{cor:stat31_etal}
  The following statistics are equidistributed with the number of cycles 
  and thus also exhibit the cyclic sieving phenomenon for $n\geq 2$ with respect to involutions without fixed points.
  \begin{itemize}
    \item Statistic $7$: The number of saliances (right-to-left maxima),
    \item Statistic $314$: The number of left-to-right-maxima,
    \item Statistic $542$: The number of left-to-right-minima,
    \item Statistic $991$: The number of right-to-left minima.
  \end{itemize}
\end{cor}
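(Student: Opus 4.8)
The plan is to reduce the entire corollary to the single fact, already recorded in \Cref{thm:rev_comp_cyc_decomp}, that the number of cycles exhibits the CSP under fixed-point-free involutions, together with the general principle recorded in Subsection~\ref{sec:bg_CSP} that two statistics sharing a generating function exhibit the CSP for exactly the same orbit structures. Thus it suffices to prove that each of Statistics $7$, $314$, $542$, and $991$ is equidistributed with the number of cycles (Statistic $31$); the CSP then transfers automatically, with no further evaluation at $-1$ required.

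First I would connect the number of cycles to the number of left-to-right maxima (Statistic $314$). The first fundamental transform $\F$ is a bijection on $S_n$ that sends each cycle to a left-to-right maximum (see \Cref{remark:fund_bij} and \cite[Prop.~1.3.1]{Stanley2011}); since $\F$ is a bijection, the statistic generating functions of the number of cycles and of the number of left-to-right maxima coincide on all of $S_n$.

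Next I would obtain the remaining three extremal statistics from Statistic $314$ by composing with the reverse and complement maps, both of which are bijections of $S_n$ and hence preserve statistic generating functions. Tracking how each map permutes the extrema: the complement $\C$, which flips values via $\C(\sigma)_i = n+1-\sigma_i$, carries each left-to-right maximum to a left-to-right minimum, so Statistic $542$ is equidistributed with Statistic $314$; the reverse $\R$, which reverses positions via $\R(\sigma)_i = \sigma_{n+1-i}$, carries each left-to-right maximum to a right-to-left maximum, giving the equidistribution of Statistic $7$ with Statistic $314$; and applying $\R$ to the left-to-right minima yields the right-to-left minima, so Statistic $991$ is equidistributed with Statistic $542$. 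Chaining these equidistributions back through Statistic $314$ to the number of cycles completes the argument for all $n \geq 2$.

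The only genuine content is the careful bookkeeping of how $\C$ and $\R$ interchange the four flavors of extremum, which is routine. I expect no obstacle beyond this verification, since once equidistribution with Statistic $31$ is established, the transfer of the CSP is immediate from \Cref{thm:rev_comp_cyc_decomp}.
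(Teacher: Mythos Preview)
Your proposal is correct and follows essentially the same approach as the paper: the paper justifies the corollary by citing \cite[Prop.~1.3.1]{Stanley2011} for the equidistribution of the number of cycles with left-to-right maxima, and remarks that the remaining statistics follow ``by symmetry'' (i.e., via the reverse and complement bijections), exactly as you spell out. Your write-up simply makes the symmetry argument explicit, which the paper leaves to the reader.
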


\begin{cor}\label{cor:rev_and_comp_541}
  For $n \geq 2$, the number of indices greater than or equal to $2$ such that all smaller indices appear to its right (Statistic $541$) exhibits the cyclic sieving phenomenon under involutions without fixed points.
\end{cor}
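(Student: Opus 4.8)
The plan is to reduce Statistic $541$ to the number of left-to-right minima (Statistic $542$), whose generating function is already in hand from Corollary~\ref{cor:stat31_etal}. Since the reverse and complement---and indeed any fixed-point-free involution---have all orbits of size $2$, it suffices, as noted at the start of this section, to produce an explicit generating function $f(q)$ for Statistic $541$ and verify that $f(-1)=0$.

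First I would prove the pointwise identity that Statistic $541$ equals the number of left-to-right minima minus $1$. The observation driving this is that, for a value $v$, every value smaller than $v$ lies to the right of $v$ in one-line notation precisely when every entry preceding $v$ exceeds $v$, i.e.\ precisely when $v$ is a left-to-right minimum. Hence Statistic $541$ counts exactly the left-to-right minima of value at least $2$. The value $1$ is a left-to-right minimum of every permutation (vacuously, all smaller values appear to its right), and it is the unique left-to-right minimum discarded by the restriction to values $\geq 2$; therefore $\mathrm{Stat}541(\sigma)=\mathrm{Stat}542(\sigma)-1$ for every $\sigma\in S_n$.

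With this identity, the generating functions are related by $f_{541}(q)=f_{542}(q)/q=\prod_{k=1}^{n-1}(q+k)$, using the closed form $f_{542}(q)=q\prod_{k=1}^{n-1}(q+k)$ recorded in Proposition~\ref{thm:rev_comp_cyc_decomp} and Corollary~\ref{cor:stat31_etal}. Evaluating at $q=-1$, the $k=1$ factor is $-1+1=0$, so $f_{541}(-1)=0$ for all $n\geq 2$; equivalently $f_{541}(-1)=-f_{542}(-1)=0$, since subtracting the constant $1$ only flips the overall sign. This yields the desired CSP. The only step that is not purely mechanical is the equivalence between ``all smaller values to the right'' and ``left-to-right minimum,'' together with the bookkeeping that exactly one minimum (the value $1$) is removed; I expect this to be the main, though minor, obstacle, with everything after it following directly from the known generating function.
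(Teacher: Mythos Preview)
Your proposal is correct and follows essentially the same approach as the paper: both identify Statistic $541$ as the number of left-to-right minima minus one, deduce $f_{541}(q)=f_n(q)/q=\prod_{k=1}^{n-1}(q+k)$ from the generating function in Proposition~\ref{thm:rev_comp_cyc_decomp}, and observe that the $k=1$ factor vanishes at $q=-1$. Your write-up is simply more explicit about the equivalence between ``all smaller values lie to the right'' and ``left-to-right minimum,'' which the paper states without elaboration.
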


\begin{proof}
  Note that for $n\geq 2$, the generating function $f(q) = \frac{1}{q} f_n(q)$, where $f_n(q)$ is defined in the proof of  Proposition \ref{thm:rev_comp_cyc_decomp}. This is because the number of indices greater than or equal to 2 of a permutation such that all smaller indices appear to its right equals the number of left-to-right minima minus one.
  Thus $f(-1)=0$. 
\end{proof}

\begin{definition}\label{def:abs_len}
  The \textbf{absolute length} of a permutation of $[n]$ equals
  $n$ minus the number of cycles.
\end{definition}

\begin{prop} \label{thm:rev_comp_abs_len}
  For $n \geq 2$, the following statistics exhibit the cyclic sieving phenomenon under involutions without fixed points.
  \begin{itemize}
    \item Statistic $216$: The absolute length,
    \item Statistic $316$: The number of non-left-to-right-maxima.
  \end{itemize}
\end{prop}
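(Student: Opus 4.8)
The plan is to reduce both statements to the single computation $f_n(-1)=0$ already recorded in Proposition~\ref{thm:rev_comp_cyc_decomp}, exploiting the fact that both statistics are, up to the additive constant $n$, complements of statistics we already understand. Since every map under consideration is a fixed-point-free involution, by the $q=-1$ phenomenon discussed in Subsection~\ref{sec:bg_CSP} it suffices to verify that each statistic generating function vanishes at $q=-1$.

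For the absolute length (Statistic $216$), I would start from Definition~\ref{def:abs_len}, which gives $\mathrm{abs}(\sigma) = n - c(\sigma)$, where $c(\sigma)$ denotes the number of cycles. Writing $g(q) = \sum_{\sigma \in S_n} q^{n-c(\sigma)}$ and evaluating at $q=-1$, I would use the identity $(-1)^{n-c(\sigma)} = (-1)^n (-1)^{c(\sigma)}$ (since $(-1)^{-c}=(-1)^c$) to factor out the constant sign and obtain
\[ g(-1) = (-1)^n \sum_{\sigma \in S_n} (-1)^{c(\sigma)} = (-1)^n f_n(-1), \]
where $f_n$ is the cycle-number generating function from Proposition~\ref{thm:rev_comp_cyc_decomp}. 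Since $f_n(-1)=0$, this yields $g(-1)=0$, establishing the CSP for the absolute length.

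For the number of non-left-to-right-maxima (Statistic $316$), I would instead argue by equidistribution. If $\ell(\sigma)$ denotes the number of left-to-right maxima (Statistic $314$), then the number of non-left-to-right-maxima equals $n - \ell(\sigma)$. By Corollary~\ref{cor:stat31_etal}, $\ell$ is equidistributed with the number of cycles $c$, hence $n - \ell$ is equidistributed with $n - c = \mathrm{abs}$. Since equidistributed statistics share the same generating function, the CSP for Statistic $316$ follows immediately from the case just proved.

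The computation is routine; the only point requiring a little care is the passage from the equidistribution of a statistic to the equidistribution of its complement $n - \mathrm{stat}$. This is harmless here because $n$ is constant on $S_n$, so complementing merely multiplies the generating function by $q^n$ and reverses its coefficient sequence, preserving both equidistribution and (with the sign bookkeeping above) the vanishing at $q=-1$. I do not anticipate any genuine obstacle.
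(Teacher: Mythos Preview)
Your proposal is correct and takes essentially the same approach as the paper. The paper likewise observes that both statistics are $n$ minus a statistic equidistributed with the cycle count, yielding generating function $q^n f_n(q^{-1})$, and then evaluates at $q=-1$ to get $(-1)^n f_n(-1)=0$; you simply treat the two statistics in the opposite order and phrase the sign computation directly at the level of $(-1)^{n-c(\sigma)}$ rather than via the polynomial identity, but the content is identical.
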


The proof of Proposition \ref{thm:rev_comp_abs_len} follows similarly to that of Proposition \ref{thm:rev_comp_cyc_decomp}, as the number of cycles  is equidistributed with the number of left-to-right minima. Combining this with Definition \ref{def:abs_len} gives that absolute length has 
generating function $q^n f_n(q^{-1})$ (as does the number of non-left-to-right minima).

The final result of this subsection relies on work of Sagan \cite{Sagan1987} and Schur \cite{Schur1911} on generating functions related to shifted tableau.

\begin{thm}\label{circled_shifted}
  For $n \geq 2$, the number of circled entries of the shifted recording tableau \textnormal{(Statistic 864)} exhibits the cyclic sieving phenomenon under involutions without fixed points.
\end{thm}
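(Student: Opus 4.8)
The plan is to exploit that an involution without fixed points has only orbits of size $2$ and no fixed points, so by the $q=-1$ phenomenon recalled in Subsection~\ref{sec:bg_CSP} the CSP for such a map is equivalent to the two conditions $f(1)=n!$ and $f(-1)=0$, where $f(q)=\sum_{\sigma\in S_n}q^{c(\sigma)}$ and $c(\sigma)$ denotes the number of circled entries of the shifted recording tableau of $\sigma$ (Statistic $864$). The condition $f(1)=n!$ is automatic, so the entire content is to show $f(-1)=0$.

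To obtain a usable form of $f$, I would pass through the Sagan--Worley shifted RSK correspondence, which is the combinatorial heart of \cite{Sagan1987} (building on the $Q$-functions of \cite{Schur1911}). This correspondence is a bijection between $S_n$ and pairs $(P,Q)$ of standard shifted Young tableaux of a common strict shape $\lambda\vdash n$, in which the off-diagonal cells of the recording tableau $Q$ each independently carry a circle or not; the total number of circles is exactly $c(\sigma)$. Writing $g^\lambda$ for the number of standard shifted tableaux of shape $\lambda$ and $\ell(\lambda)$ for the number of parts (equivalently, the number of diagonal cells), the shape $\lambda$ has $n-\ell(\lambda)$ off-diagonal cells. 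Grouping permutations first by shape and then by which off-diagonal cells of $Q$ are circled yields
\[
f(q)=\sum_{\lambda}\,(g^\lambda)^2\,(1+q)^{\,n-\ell(\lambda)},
\]
the sum running over strict partitions $\lambda$ of $n$. As a sanity check, $f(1)=\sum_\lambda 2^{\,n-\ell(\lambda)}(g^\lambda)^2=n!$ is the shifted analogue of the classical identity $\sum_\lambda (f^\lambda)^2=n!$ for ordinary standard Young tableaux, which also confirms that the circling on off-diagonal cells is free.

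With this formula the evaluation is immediate: every summand is divisible by $(1+q)$ as soon as $n-\ell(\lambda)\geq 1$ for all $\lambda$ occurring. This holds for $n\geq 2$, since a strict partition of $n$ with $\ell(\lambda)=n$ would be a set of $n$ distinct positive integers summing to $n$, which is impossible because $1+2+\cdots+n>n$. Hence $\ell(\lambda)\leq n-1$ for every strict $\lambda\vdash n$, so $(1+q)\mid f(q)$ and therefore $f(-1)=0$, giving the CSP under any involution without fixed points.

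I expect the main obstacle to be the middle step: verifying from \cite{Sagan1987} the precise form of the generating function, namely that Statistic~$864$ counts the circled off-diagonal cells of the recording tableau and that these circles vary freely, so that each strict shape $\lambda$ contributes the clean factor $(1+q)^{\,n-\ell(\lambda)}$. Everything else is formal: once the product-over-shapes form of $f(q)$ is in hand, the vanishing $f(-1)=0$ reduces to the elementary fact that no strict partition of $n$ has $n$ parts when $n\geq 2$. In fact only the divisibility $(1+q)\mid f(q)$ is needed, so it would suffice to show that every strict partition of $n$ (with $n\geq 2$) contributes at least one off-diagonal cell, which makes the argument robust even if the exact circling conventions require some care to cite.
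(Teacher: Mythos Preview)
Your proposal is correct and follows essentially the same approach as the paper: both obtain the generating function $f(q)=\sum_{\lambda}(g^\lambda)^2(1+q)^{\,n-\ell(\lambda)}$ from Sagan's shifted RSK bijection and conclude $f(-1)=0$. You are in fact slightly more careful than the paper in justifying the final step, explicitly noting that no strict partition of $n\geq 2$ has $n$ parts so every summand carries at least one factor of $(1+q)$.
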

\begin{proof}
  A shifted partition shape of $n$ is a list of strictly decreasing numbers summing to $n$; the diagram of a shifted partition is drawn with each row indented one more box than the previous. There is a bijection (analogous to the Robinson-Schensted correspondence) between permutations and pairs of shifted standard tableau $(P,Q)$ of the same shape, where $Q$ has a subset of its non-diagonal entries circled \cite[Theorem 3.1]{Sagan1987}. Since this is a bijection, all possibilities of circled entries appear. The bijection gives a combinatorial proof of the following identity \cite[Corollary 3.2]{Sagan1987} (originally due to Schur \cite{Schur1911})
  \[n!=\sum_{\lambda\models n}2^{n-\ell(\lambda)}g_{\lambda}^2,\]
  where the sum is over all shifted partition shapes $\lambda$ of $n$, $\ell(\lambda)$ is the number of rows of $\lambda$ (so that $n-\ell(\lambda)$ is the number of non-diagonal entries), and $g_{\lambda}$ is the number of shifted standard tableaux of shape $\lambda$.
  The generating function for the number of circled entries is
  $f(q)=\sum_{\lambda\models n}(1+q)^{n-\ell(\lambda)}g_{\lambda}^2$,
  and we obtain $f(-1)=0$ as desired.
\end{proof}

\subsection{Bijective Proofs}
In this subsection, we consider results related to statistic generating functions which do not have known, or easily discovered, formulas. Thus the proofs are more difficult than simply plugging in $-1$ to a known generating function. We prove these results by pairing the permutations so that the statistic changes in parity across the pair. First, we include those we paired with either the reverse or the complement map. Then we provide other involutions without fixed points for the remaining results.

The main results in this subsection are Theorems \ref{inv_at_most_2}, \ref{inc_dec}, \ref{thm:bi_alt}, and \ref{conj:patterns}, the last of which relates to the number of occurrences of certain patterns of length three. In addition, we have \Cref{strict_3_des}, which relates to statistics which exhibit the CSP for certain values of $n$. The section also contains results such as \Cref{cyclically}, which is related to the number of cyclically simple transpositions needed to sort a permutation, among others. We conclude the section with \Cref{conj:inv_distance_3}.

We will begin with \Cref{inv_at_most_2}, which relates to the following definition.

\begin{definition}\label{def:inv_k}
  Let $\sigma=\sigma_1 \sigma_2\ldots \sigma_n \in S_n$ for $n\geq 2$. An \textbf{inversion of distance $k$} is a pair $(\sigma_i, \sigma_{i+k})$ such that $\sigma_i>\sigma_{i+k}$. In this context, a descent is called an inversion of distance one.
  Let $\inv_k(\sigma)$ denote the number of inversions of distance $k$ in $\sigma$.
\end{definition}

\begin{thm}\label{inv_at_most_2}
  For $n \geq 2$, the number of inversions of distance at most $2$ (Statistic $495$) exhibits the cyclic sieving phenomenon under involutions without fixed points.
\end{thm}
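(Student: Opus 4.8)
The plan is to invoke the $q=-1$ phenomenon discussed in Subsection~\ref{sec:bg_CSP}. Every fixed-point-free involution on $S_n$ has the same orbit structure (all $n!/2$ orbits have size two), and the CSP depends only on that orbit structure together with the statistic generating function $f(q)=\sum_{\sigma\in S_n}q^{\inv_1(\sigma)+\inv_2(\sigma)}$. Hence it suffices to verify $f(1)=n!$ (immediate) and $f(-1)=0$, and the latter amounts to pairing the permutations of $[n]$ so that $\inv_1+\inv_2$ takes opposite parities across each pair.

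First I would take the reverse map $\R$ itself as the pairing involution. For $n\ge 2$ it has no fixed points: $\R(\sigma)=\sigma$ would force $\sigma_i=\sigma_{n+1-i}$ for all $i$, impossible at distinct positions of an injective map. So $\R$ splits $S_n$ into orbits of size two, and it remains to track how it acts on the statistic of Definition~\ref{def:inv_k}.

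Next I would compute the transformation. Writing $\tau=\R(\sigma)$ so that $\tau_i=\sigma_{n+1-i}$, a distance-one inversion $\tau_i>\tau_{i+1}$ is equivalent to $\sigma_{n-i}<\sigma_{n+1-i}$, i.e.\ a distance-one ascent of $\sigma$, giving $\inv_1(\tau)=(n-1)-\inv_1(\sigma)$. Likewise, setting $j=n-1-i$, a distance-two inversion $\tau_i>\tau_{i+2}$ is equivalent to $\sigma_j<\sigma_{j+2}$, so $\inv_2(\tau)=(n-2)-\inv_2(\sigma)$. Adding these yields
\[
  \inv_1(\tau)+\inv_2(\tau)=(2n-3)-\bigl(\inv_1(\sigma)+\inv_2(\sigma)\bigr).
\]
Since $2n-3$ is odd, $\sigma$ and $\R(\sigma)$ always have statistic values of opposite parity, so their contributions $(-1)^{\inv_1+\inv_2}$ cancel and $f(-1)=0$, which establishes the CSP for every fixed-point-free involution.

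There is essentially no serious obstacle here: the only things to check are the two index-shift identities above and the fixed-point-freeness of $\R$, all routine. The same computation goes through verbatim with the complement $\C$ in place of $\R$, since $\C(\sigma)_i=n+1-\sigma_i$ also reverses each distance-$k$ comparison, which accounts for Statistic~$495$ appearing under both maps. A closed form for $f(q)$ is therefore unnecessary; the reverse-pairing argument is the most economical route.
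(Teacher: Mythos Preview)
Your proof is correct and follows essentially the same approach as the paper: both show that Statistic~$495$ satisfies $\mathrm{stat}(\tau)=(2n-3)-\mathrm{stat}(\sigma)$ under a fixed-point-free involution, so the generating function is palindromic of odd degree and $f(-1)=0$. The only cosmetic difference is that the paper pairs via the complement $\C$ (citing \cite[Proposition~5.28]{ELMSW22} for the inversion-to-noninversion flip) while you pair via the reverse $\R$ and compute the index shifts directly; you even note that $\C$ works identically, so the arguments are interchangeable.
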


\begin{proof}
  Let $n\geq 2$
  and $\sigma \in S_n$, and recall that $\mathcal{C}(\sigma)$ denotes the complement of $\sigma$ (\Cref{def:maps}). Let $\inv_2(\sigma)$ denote the number of inversions of distance at most 2 for $\sigma$, as seen in Definition \ref{def:inv_k}. From \cite[Proposition 5.28]{ELMSW22}, if the pair $(i, j)$ is an inversion pair of $\sigma$ of distance at most 2, then $(i,j)$ is not an inversion pair for $C(\sigma)$ of distance at most 2.

  It is also shown in \cite[Proposition 5.28]{ELMSW22} that there are $2n-3$ possible inversion pairs of distance at most two for any permutation. So, if $\sigma$ has $p$ inversion pairs of distance at most 2, then $C(\sigma)$ has $(2n-2) - p$ inversion pairs of distance at most $2$.

  We note that the statistic generating function can be written as
  \[
    f(q) = \sum_{\sigma \in S_n} q^{\inv_2(\sigma)} = \sum_{p=0}^{2n-2} a_p q^p,
  \]
  for $n\geq 2$.
From \cite[Proposition 5.16]{ELMSW22}, we note that $\inv(\sigma)+\inv (\C(\sigma)) = \frac{n(n-1)}{2}$, which is the maximum number of inversion pairs in a permutation of length $n$. Hence, the coefficients $a_{p}$ and $a_{2n-3-p}$ are equal.
  For any $n$ value, the parity of $p$ and $2n-3-p$ are opposite. Thus, $a_{p}(-1)^p + a_{2n-3-p}(-1)^{2n-3-p} = 0$, which gives us $f(-1) = 0$ as desired.
\end{proof}

Proposition \ref{thm:inv_distance_3} can be shown using a similar proof as in Theorem \ref{inv_at_most_2}, but it has only been proven for $n$ odd. However, we believe it also holds for $n$ even (see Conjecture \ref{conj:inv_distance_3}).

\begin{prop}\label{thm:inv_distance_3}
  For $n \geq 3$ odd, the number of inversions of distance at most $3$ (Statistic $494$) exhibits the cyclic sieving phenomenon under involutions without fixed points.
\end{prop}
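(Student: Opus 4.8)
The plan is to use the complement map $\C$ (Definition~\ref{def:maps}) as the explicit fixed-point-free involution and to prove that the statistic generating function $f(q)=\sum_{\sigma\in S_n}q^{\inv_{\le 3}(\sigma)}$, where $\inv_{\le 3}(\sigma)$ abbreviates the number of inversions of distance at most $3$, satisfies $f(-1)=0$, exactly as in the proof of Theorem~\ref{inv_at_most_2}. The starting observation is that $\C$ reverses every relevant inequality: since $\C(\sigma)_i=n+1-\sigma_i$, we have $\sigma_i>\sigma_j$ if and only if $\C(\sigma)_i<\C(\sigma)_j$ for every pair of positions, while the distance $j-i$ is untouched because $\C$ changes only values. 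Consequently, for each candidate position pair $(i,i+k)$ with $k\in\{1,2,3\}$, exactly one of $\sigma$ and $\C(\sigma)$ realizes it as an inversion of distance at most $3$. The first step I would carry out is to count these candidate pairs: there are $(n-1)+(n-2)+(n-3)=3n-6$ of them, the analogue of the count $2n-3$ used for distance at most $2$ in \cite[Proposition 5.28]{ELMSW22}.

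From this the key identity is immediate: $\inv_{\le 3}(\sigma)+\inv_{\le 3}(\C(\sigma))=3n-6$ for every $\sigma\in S_n$. Hence $\C$ pairs a permutation contributing $q^{p}$ with one contributing $q^{\,3n-6-p}$, so $f(q)$ is palindromic of degree $3n-6$; writing $f(q)=\sum_p a_p q^p$, this gives $a_p=a_{3n-6-p}$. The parity argument then finishes the odd case: when $n$ is odd, $3n-6$ is odd, so $p$ and $3n-6-p$ always have opposite parity, and each orbit $\{\sigma,\C(\sigma)\}$ contributes $(-1)^p+(-1)^{\,3n-6-p}=0$ to $f(-1)$. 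Summing over all orbits gives $f(-1)=0$. Since $f(1)=n!$ automatically and a fixed-point-free involution has no fixed points, this is precisely Stembridge's $q=-1$ phenomenon, establishing the CSP; and because order-two maps exhibit the CSP according only to their orbit structure, the result transfers from $\C$ to every fixed-point-free involution.

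The main obstacle is exactly this parity step, and it is the reason the argument is confined to $n$ odd. When $n$ is even, $3n-6$ is even, so the palindromic pairing $p\leftrightarrow 3n-6-p$ now relates exponents of the \emph{same} parity; the two terms reinforce rather than cancel, and a central coefficient $a_{(3n-6)/2}$ survives, so this method cannot yield $f(-1)=0$, even though we expect the CSP still holds (Conjecture~\ref{conj:inv_distance_3}). Resolving the even case therefore appears to require abandoning the complement and instead constructing, in the spirit of the bijective subsection, a different fixed-point-free involution $\psi\colon S_n\to S_n$ that toggles the parity of $\inv_{\le 3}$ on every orbit. Identifying such a $\psi$ and controlling the effect of its local swap on all distance-$\le 3$ inversion pairs simultaneously is the genuinely harder problem that this simpler palindromic approach leaves untouched.
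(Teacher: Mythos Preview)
Your proof is correct and follows essentially the same approach as the paper: use the complement to show $f(q)$ is palindromic of degree $3n-6$, then observe that when $n$ is odd this degree is odd so the palindromic pairing forces $f(-1)=0$. Your discussion of why the method fails for even $n$ also matches the paper's treatment leading to Conjecture~\ref{conj:inv_distance_3}.
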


  The following definition is used in Theorem \ref{strict_3_des}.
  \begin{definition}\label{def:width_k}
  Recall that $\Des(\sigma)$ is the set of indices where descents occur, and $\des(\sigma) = |\Des(\sigma)|$. Similarly, let $\Des_k(\sigma) = \{ i\mid\sigma_i>\sigma_{i+k}\}$ be the set of indices where \textbf{width $k$-descents} occur (see \cite{widthk}). Let $\des_k(\sigma) = |\Des_k(\sigma)|$. 
\end{definition}
  
\begin{remark}
We remark that FindStat defines these descents using the phrasing ``descents of distance 2 of a permutation'' and ``strict 3-descents of a permutation,'' and the reader may also encounter the notion of $k$-descents elsewhere in the literature. We have standardized Definition \ref{def:width_k} to ``width $k$-descents'' and will utilize this notion throughout the following results.
\end{remark}

The following result shows that the CSP holds for width $k$-descents when $n$ and $k$ have opposite parity. 
\begin{thm}\label{strict_3_des}
  For $k \geq 1$ and $n\geq 2$, the number of width $k$-descents exhibits the cyclic sieving phenomenon under involutions without fixed points when $n$ and $k$ have the opposite parity. In particular, this includes the following FindStat statistics in the indicated cases:
  \begin{itemize}
    \item Statistic $21$: The number of descents for $n\geq 2$, when $n$ is an even number.
    \item Statistic $836$: The number of width $2$-descents, when $n\geq 3$ and $n$ is an odd number.
    \item Statistic $1520$: The number of width $3$-descents, when $n\geq 4$ and $n$ is an even number.
  \end{itemize}
\end{thm}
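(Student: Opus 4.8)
The plan is to reduce the statement to the single identity $f(-1)=0$, where $f(q)=\sum_{\sigma\in S_n}q^{\des_k(\sigma)}$ is the generating function for width $k$-descents. As noted in Subsection~\ref{sec:bg_CSP}, for an involution the CSP is equivalent to $f(1)=n!$ together with $f(-1)$ equaling the number of fixed points, which here is $0$; and since all fixed-point-free involutions share the same orbit structure, it suffices to exhibit one of them that witnesses this cancellation. I would use the complement map $\C$, exactly as in the proof of Theorem~\ref{inv_at_most_2}.

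First I would record that a width $k$-descent can occur only at an index $i$ with $1\le i\le n-k$, so there are exactly $n-k$ candidate positions. Next, $\C$ is fixed-point-free: $\C(\sigma)=\sigma$ would force $\sigma_i=(n+1)/2$ for every $i$, which is impossible for a permutation when $n\ge2$. The key observation is that $\C$ reverses each relevant comparison without moving positions: since $\C(\sigma)_i=n+1-\sigma_i$ and $\C(\sigma)_{i+k}=n+1-\sigma_{i+k}$, we have $\C(\sigma)_i>\C(\sigma)_{i+k}$ if and only if $\sigma_i<\sigma_{i+k}$. Hence index $i$ is a width $k$-descent of $\sigma$ exactly when it is not one of $\C(\sigma)$, giving
\[\des_k(\sigma)+\des_k(\C(\sigma))=n-k.\]
When $n$ and $k$ have opposite parity, $n-k$ is odd, so $\des_k(\sigma)$ and $\des_k(\C(\sigma))$ have opposite parities and $(-1)^{\des_k(\sigma)}+(-1)^{\des_k(\C(\sigma))}=0$. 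Summing this cancellation over the two-element orbits $\{\sigma,\C(\sigma)\}$ of $\C$ yields $f(-1)=0$, which establishes the CSP for every fixed-point-free involution.

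The three named cases then follow by specializing $k$: Statistic~$21$ is $k=1$ (requiring $n$ even), Statistic~$836$ is $k=2$ (requiring $n$ odd), and Statistic~$1520$ is $k=3$ (requiring $n$ even), each consistent with $n-k$ odd. This is fundamentally a parity count, so I expect no serious obstacle; the only points needing care are the exact tally of $n-k$ candidate positions and the verification that $\C$ reverses each fixed-position comparison rather than relabeling positions (as the reverse map would). It is also worth stating explicitly that exhibiting the single involution $\C$ suffices, since for an order-two action the CSP depends only on $f(1)$ and $f(-1)$, neither of which depends on the choice of fixed-point-free involution.
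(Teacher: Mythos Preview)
Your proposal is correct and follows essentially the same approach as the paper: both use the complement map, the identity $\des_k(\sigma)+\des_k(\C(\sigma))=n-k$, and the resulting parity cancellation when $n-k$ is odd to conclude $f(-1)=0$. The only cosmetic difference is that the paper phrases the consequence as the generating function being palindromic (with coefficients $a_d=a_{n-k-d}$), whereas you sum the cancellation directly over the two-element orbits of $\C$.
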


\begin{proof}
  Let $n\geq 2$ and $\sigma \in S_n$, and let $\mathcal{C}(\sigma)  $ be the complement of $\sigma$. 
  Note that any descent in $\sigma$ is mapped to an ascent in $C(\sigma)$ and that there are $n-k$ possible width $k$-descents (see e.g.~\cite[Prop 5.26-5.27]{ELMSW22}). 
  Thus if $\sigma$ has $d$ width $k$-descents, then $C(\sigma)$ has $n-k - d$ width $k$-descents.
  Thus, the generating function
$f(q) = \sum_{\sigma \in S_n} q^{\des_k(\sigma)} = \sum_{p=0}^{n-k}a_pq^p$
  is a palindromic polynomial.
  If $n$ and $k$ have opposite parity, then $n-k$ is odd. So, $d$ and $n-k-d$ have the opposite parity, $a_d(-1)^d +a_{n-k-d}(-1)^{n-k-d} = 0$, and $f(-1)=0$.
\end{proof}

\begin{remark}
After the initial preprint version of this paper was posted, Sergi Elizalde found by a different proof method the precise condition on $n$ and $k$ under which the CSP for width $k$-descents holds, namely, if and only if $n$ is not congruent to $k$ modulo $2k$. 
\end{remark}

For the remaining results in this subsection, we use a map other than the reverse or the complement to pair the permutations. We define a transposition $\psi$ on the set of permutations of $[n]$ and show the parity of the statistic on $\sigma$ and $\psi(\sigma)$ differs. We provide a detailed proof for Theorem \ref{inc_dec} below, but for other results, we simply give the map used, unless more details are necessary. (Full details are included in version 1 of the arXiv preprint.)

\begin{thm}\label{inc_dec}
  For $n \geq 4$, the number of times a permutation switches from increasing to decreasing or from decreasing to increasing (Statistic $483$) exhibits the cyclic sieving phenomenon under involutions without fixed points.
\end{thm}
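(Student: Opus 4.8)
The plan is to reduce everything to a single parity computation. Every involution without fixed points has the same orbit structure (all orbits of size $2$), so by the $q=-1$ phenomenon recalled in Subsection~\ref{sec:bg_CSP} it suffices to prove $f(-1)=\sum_{\sigma\in S_n}(-1)^{\mathrm{Stat}483(\sigma)}=0$; this value is exactly the number of fixed points of any such involution. To match the method announced for this subsection, I would establish $f(-1)=0$ by exhibiting one fixed-point-free involution $\psi$ on $S_n$ (a pairing, not necessarily the reverse or complement) under which $\mathrm{Stat}483$ changes parity, so that each orbit contributes $(-1)+1=0$.

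To find the right local move, I would encode each $\sigma$ by its up-down word $w(\sigma)\in\{U,D\}^{n-1}$, with $w_i=U$ when $\sigma_i<\sigma_{i+1}$ and $w_i=D$ otherwise; then $\mathrm{Stat}483(\sigma)$ is the number of indices $i$ with $w_i\neq w_{i+1}$, i.e.\ the number of transitions in $w$. The key observation is that if two position-adjacent entries are consecutive in value, $\{\sigma_i,\sigma_{i+1}\}=\{v,v+1\}$, then swapping them flips the single letter $w_i$ and leaves every other letter of $w$ unchanged. When $i=1$ or $i=n-1$ this flips exactly one comparison $w_i\neq w_{i+1}$, changing $\mathrm{Stat}483$ by $\pm1$; for interior $i$ the count changes by an even amount. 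So $\psi$ must be built from boundary swaps.

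From this local move I would assemble $\psi$ by choosing, in a canonical involutive way, a boundary swap guaranteed to flip parity: if the first two entries are consecutive in value, swap them, and otherwise peel off $\sigma_1$, reduce to a permutation of $[n-1]$, and recurse, in the spirit of the involution in the proof of Theorem~\ref{thm:357}. The main obstacle is precisely this global bookkeeping. A naive swap of the last two positions does \emph{not} work: when $\sigma_{n-2}$ is the median of $\sigma_{n-2},\sigma_{n-1},\sigma_n$, position $n-1$ remains a local extremum and a second comparison can move, so the parity need not flip. Making the choice of boundary swap simultaneously canonical, involutive, fixed-point-free for $n\geq4$, and parity-flipping on all of $S_n$ is the delicate part of this route.

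A cleaner completion that sidesteps this obstacle uses the classical recurrence for the number $R(n,k)$ of permutations of $[n]$ with $k$ alternating runs, namely $R(n,k)=k\,R(n-1,k)+2\,R(n-1,k-1)+(n-k)\,R(n-1,k-2)$. Writing $A_n(t)=\sum_k R(n,k)t^k$ and using $\mathrm{Stat}483=(\text{number of alternating runs})-1$ for $n\geq2$, one has $\sum_{\sigma}(-1)^{\mathrm{Stat}483(\sigma)}=-A_n(-1)$. Substituting the recurrence and reindexing, the three resulting sums telescope to $A_n(-1)=(n-4)\,A_{n-1}(-1)$. The factor $n-4$ vanishes at $n=4$, forcing $A_n(-1)=0$ for every $n\geq4$ and hence $f(-1)=0$, while correctly predicting $f(-1)\neq0$ at $n=2,3$. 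I expect this recurrence argument to be the most efficient way to finish, with the bijective construction of $\psi$ being the genuinely harder alternative.
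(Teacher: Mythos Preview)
Your recurrence argument is correct and complete: from $R(n,k)=k\,R(n-1,k)+2\,R(n-1,k-1)+(n-k)\,R(n-1,k-2)$ one indeed gets $A_n(t)=t(1-t^2)A_{n-1}'(t)+\bigl(2t+(n-2)t^2\bigr)A_{n-1}(t)$, hence $A_n(-1)=(n-4)A_{n-1}(-1)$, so $A_n(-1)=0$ for all $n\ge 4$ and $f(-1)=-A_n(-1)=0$. This is a genuinely different route from the paper's proof and has the pleasant feature of explaining the threshold $n\ge 4$ via the factor $n-4$.

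However, your dismissal of the ``naive swap of the last two positions'' is mistaken, and that swap is exactly what the paper uses. In your own notation, swapping $\sigma_{n-1}$ and $\sigma_n$ always flips $w_{n-1}$, and it additionally flips $w_{n-2}$ precisely when $\sigma_{n-2}$ is the median of $\sigma_{n-2},\sigma_{n-1},\sigma_n$. In the non-median case only the transition at position $n-2$ is affected, giving a change of $\pm 1$. In the median case \emph{both} $w_{n-2}$ and $w_{n-1}$ flip; the transition between them is therefore unchanged, while the transition between $w_{n-3}$ and $w_{n-2}$ flips---and for $n\ge 4$ this position exists---again giving a change of $\pm 1$. (For $n=3$ there is no $w_{n-3}$, and indeed the swap fails on $213\leftrightarrow 231$.) So the paper's involution $\psi(\sigma)=\sigma_1\cdots\sigma_{n-2}\sigma_n\sigma_{n-1}$ works for all $n\ge 4$ by a short case analysis on the pattern of the last three entries; your more elaborate recursive $\psi$ and the associated ``global bookkeeping'' are unnecessary.
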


\begin{proof}
  We see the CSP for this statistic does not hold for $n = 3$, as $1 2 3$ and $3 2 1$ never change from increasing to decreasing, but all other permutations change once. So the generating function is given by $f(q) = 2 + 4q$ and $f(-1) = -2$.

  Let $\sigma = \sigma_1\sigma_2 \dotsm \sigma_n\in S_n$ with $n \geq 4$. Define the transposition $\psi(\sigma) = \sigma_1\sigma_2 \dotsm \sigma_n \sigma_{n-1}$. Swapping $\sigma_{n-1}$ and $\sigma_{n}$ does not affect any changes in increasing and decreasing prior to $\sigma_{n-2}$. If $\sigma_{n-2}\sigma_{n-1}\sigma_n$ contains the pattern $123$ (see Definition~\ref{def:patterns}), then this contributes no changes to increasing or decreasing in $\sigma$, but in $\psi(\sigma)$, it becomes the pattern $132$, which contributes $+1$ to the statistic. Similarly, if $\sigma$ has the pattern $132$, then $\psi(\sigma)$ has the pattern $123$ and the statistic changes by 1 between $\sigma$ and $\psi(\sigma)$. If $\sigma_{n-2}\sigma_{n-1}\sigma_n$ has the pattern $321$, then this contributes no changes to increasing or decreasing in $\sigma$, but in $\psi(\sigma)$, it becomes the pattern $312$, which contributes +1 to the statistic. Similarly, if $\sigma$ has the pattern $312$, then $\psi(\sigma)$ has the pattern $321$ and the statistic changes by 1 between $\sigma$ and $\psi(\sigma)$.

  If $\sigma_{n-2}\sigma_{n-1}\sigma_n$ contains the pattern $213$, then we consider two cases. If $\sigma_{n-3} < \sigma_{n-2}$, then $\sigma$ changes between increasing and decreasing at $\sigma_{n-2}$ and $\sigma_{n-1}$, but $\psi(\sigma)$ loses the change at the $n-2$ position as it ends with the pattern $231$. If $\sigma_{n-3} > \sigma_{n-2}$, then $\sigma$ changes between increasing and decreasing at $\sigma_{n-1}$, but $\psi(\sigma)$ adds a change at the position $n-2$ as it ends with the pattern $231$. Similarly, if $\sigma$ has the pattern $231$, then $\psi(\sigma)$ has the pattern $213$ and the statistic changes by $1$ between $\sigma$ and $\psi(\sigma)$.

  Thus, the parity in the number of times a permutation switches from increasing to decreasing differs between $\sigma$ and $\psi(\sigma)$.
\end{proof}

In Theorem \ref{Mahonian}, we will show that the number of inversions of a permutation exhibits the cyclic sieving phenomenon under both the reverse and complement maps, as it is a Mahonian statistic. For now, instead of considering all inversions, we restrict to even inversions.

\begin{definition} An inversion is an \textbf{even inversion} when the indices $i$ and $j$ have the same parity.
\end{definition}

\begin{prop}\label{ev_inv}
  For $n \geq 3$, the number of even inversions (Statistic $538$) exhibits the cyclic sieving phenomenon under involutions without fixed points.
\end{prop}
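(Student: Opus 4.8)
The plan is to compute the generating function $f(q)=\sum_{\sigma\in S_n}q^{\#\text{even inversions}(\sigma)}$ explicitly and show $f(-1)=0$. Since the maps under consideration are fixed-point-free involutions, the CSP reduces (as explained at the start of this section) to the single evaluation $f(-1)=0$, so a closed form for $f$ is all I need.

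The key structural observation is that an even inversion is exactly an inversion whose two positions lie in the same parity class. Splitting $[n]$ into the odd positions $O=\{1,3,5,\dots\}$ and the even positions $E=\{2,4,6,\dots\}$, the number of even inversions of $\sigma$ equals the number of inversions of the subsequence read off the odd positions plus the number of inversions of the subsequence read off the even positions. Because the inversion count of a subsequence depends only on its relative order (its standardization), I would set up the bijection $\sigma\mapsto(A,\alpha,\beta)$, where $A\subseteq[n]$ is the set of values occupying the odd positions, $\alpha\in S_a$ is the standardization of the odd-position subsequence, and $\beta\in S_b$ that of the even-position subsequence, with $a=\lceil n/2\rceil$ and $b=\lfloor n/2\rfloor$. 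Reconstructing $\sigma$ from $(A,\alpha,\beta)$ is clearly unique, so this is a genuine bijection, and under it $\#\text{even inversions}(\sigma)=\inv(\alpha)+\inv(\beta)$.

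This bijection factors the generating function. Writing $[m]_q!:=\prod_{k=1}^m(1+q+\cdots+q^{k-1})$ for the $q$-factorial, which is the inversion generating function of $S_m$, and noting the value-set $A$ contributes a free factor of $\binom{n}{a}$, I would conclude
\[
f(q)=\binom{n}{a}\,[a]_q!\,[b]_q!.
\]
Evaluating at $q=-1$ then finishes: since $[2]_{-1}=1+(-1)=0$ and $a=\lceil n/2\rceil\geq 2$ whenever $n\geq 3$, the factor $[a]_{-1}!$ vanishes, giving $f(-1)=0$ and hence the CSP. (A sanity check at $n=3$ gives $f(q)=\binom{3}{2}[2]_q![1]_q!=3+3q$, so $f(-1)=0$, and at $n=2$ one gets $f(q)=2$, consistent with the restriction $n\geq 3$.)

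The step to pin down carefully is the factorization—establishing cleanly that the even-inversion statistic splits as a sum over the two parity blocks and that, for a fixed value-set $A$, the two blocks contribute independently and each is governed only by relative order. Once that is justified, the evaluation at $-1$ is immediate, and this is the step I expect to be the only real content. I would also remark that a bijective argument in the style of Theorem \ref{inc_dec} is available in principle—swapping two entries in positions of equal parity toggles an even inversion—but boundary and tail interactions make the parity bookkeeping messier than the factorization above, so I would prefer the generating-function route.
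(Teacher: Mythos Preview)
Your proof is correct, but the paper takes the bijective route you chose to set aside. The paper simply uses the involution $\psi(\sigma)=\sigma_3\sigma_2\sigma_1\sigma_4\cdots\sigma_n$ (swap positions $1$ and $3$). The parity bookkeeping you worried about is in fact trivial here: the pair $(1,3)$ toggles its status as an even inversion, while for any other odd position $j>3$ the two pairs $(1,j)$ and $(3,j)$ swap roles and contribute the same total before and after, and even positions are not involved at all. So the paper's argument is a one-line parity flip.

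Your approach is genuinely different and buys more: you obtain the closed form $f(q)=\binom{n}{\lceil n/2\rceil}\,[\lceil n/2\rceil]_q!\,[\lfloor n/2\rfloor]_q!$ for the full generating function, from which $f(-1)=0$ for $n\ge 3$ is immediate via the factor $[2]_{-1}=0$. The factorization via the bijection $\sigma\mapsto(A,\alpha,\beta)$ is clean and correct, since the even-inversion count depends only on the relative orders within each parity block. The paper's bijection is shorter for this particular statement, but your method yields strictly more information and would, for instance, immediately give the evaluation of $f$ at any root of unity.
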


This result does not hold for $n = 2$ since the generating function is $f(q) = 2$. For $n \geq 3$, Proposition \ref{ev_inv} is proved using $\psi(\sigma) = \sigma_3\sigma_2\sigma_1 \dotsm \sigma_n$.
%

Odd inversions are defined as inversions where the indices $i$ and $j$ differ in parity. The number of odd inversions  (Statistic 539) does not exhibit an analogous CSP; one can verify from the generating functions for $n=4$ and $n=5$ that can be found on the FindStat page for the statistic.

\begin{definition}
  The \textbf{number of up-down runs} of a permutation $\sigma$ equals the number of maximal monotone consecutive subsequences of $\sigma$ plus 1 if $1$ is a descent ($\sigma_1 > \sigma_2)$.
\end{definition}

\begin{example}
  Let $\sigma = 53142$. Then, the number of up-down runs of $\sigma$ is $4$. These are $531, 14, 42,$ as well as the descent in the first position.
\end{example}

\begin{prop}\label{up-down_runs}
  For $n \geq 2$, the number of up-down runs (Statistic $638$) exhibits the cyclic sieving phenomenon under involutions without fixed points.
\end{prop}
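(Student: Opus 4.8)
The orbits of any fixed-point-free involution all have size $2$, so by the $q=-1$ phenomenon discussed in Subsection~\ref{sec:bg_CSP} it suffices to produce a single fixed-point-free involution that pairs each permutation with one whose number of up-down runs has the opposite parity. This forces $f(-1)=0$ for the statistic generating function $f$, and hence yields the CSP for \emph{every} fixed-point-free involution at once. The plan is to show that the complement map $\mathcal{C}$ (Definition~\ref{def:maps}) is such an involution, giving a uniform argument for all $n\geq 2$.

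First I would rewrite the statistic. By definition, the number of up-down runs of $\sigma$ equals the number of maximal monotone consecutive subsequences of $\sigma$ plus the indicator $[\sigma_1>\sigma_2]$. The number of maximal monotone runs is exactly $1$ plus the number of interior switches between increasing and decreasing, i.e.\ one more than the statistic appearing in Theorem~\ref{inc_dec}. Thus
\[
  \mathrm{Stat}638(\sigma) = \bigl(1 + \mathrm{Stat}483(\sigma)\bigr) + [\sigma_1>\sigma_2].
\]

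Next I would analyze each piece under $\mathcal{C}$. Since $\mathcal{C}(\sigma)_i = n+1-\sigma_i$, every ascent of $\sigma$ becomes a descent of $\mathcal{C}(\sigma)$ and vice versa, so the up/down direction of each consecutive step is flipped; consequently two adjacent steps change direction in $\mathcal{C}(\sigma)$ exactly when they do in $\sigma$, and the number of switches is unchanged, $\mathrm{Stat}483(\mathcal{C}(\sigma)) = \mathrm{Stat}483(\sigma)$. Hence the number of maximal monotone runs is preserved. On the other hand, the first step reverses: $\sigma_1>\sigma_2$ if and only if $\mathcal{C}(\sigma)_1 = n+1-\sigma_1 < n+1-\sigma_2 = \mathcal{C}(\sigma)_2$, so the indicator $[\sigma_1>\sigma_2]$ flips. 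Combining the two, $\mathrm{Stat}638(\mathcal{C}(\sigma))$ and $\mathrm{Stat}638(\sigma)$ differ by exactly $1$, and therefore have opposite parity.

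Finally, $\mathcal{C}$ is a fixed-point-free involution, since a fixed point would force $\sigma_i = \tfrac{n+1}{2}$ for all $i$. Pairing $\sigma$ with $\mathcal{C}(\sigma)$ thus cancels the contributions to $f(-1)$ in pairs, giving $f(-1)=0$ for every $n\geq 2$ with no exceptional small cases. The only point requiring genuine care is the claim that complementation preserves the run count: flipping the direction of every consecutive step leaves the location of each direction change intact, which is immediate but is the crux of the argument. An alternative, less uniform route would reuse the transposition swapping the last two entries from the proof of Theorem~\ref{inc_dec}, which changes the run count by $\pm1$ while fixing $[\sigma_1>\sigma_2]$ once $n\geq 4$, handling $n=2,3$ by direct computation of the generating function.
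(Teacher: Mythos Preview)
Your proof is correct and takes a genuinely different route from the paper's. The paper's argument is exactly the alternative you sketch at the end: it uses the involution $\psi$ swapping the last two entries, appeals to Theorem~\ref{inc_dec} to say the number of maximal monotone runs changes by $\pm 1$, observes that for $n\geq 4$ the indicator $[\sigma_1>\sigma_2]$ is unaffected, and then checks $n=2,3$ by hand. Your main argument instead uses the complement $\mathcal{C}$, decomposing $\mathrm{Stat}638 = 1 + \mathrm{Stat}483 + [\sigma_1>\sigma_2]$ and noting that complementation preserves $\mathrm{Stat}483$ while flipping the indicator. This is cleaner: it works uniformly for all $n\geq 2$ without special cases, and it isolates the exact source of the parity change (the initial-descent indicator) rather than relying on the case analysis inside Theorem~\ref{inc_dec}. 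The paper's approach has the minor advantage of reusing an involution already analyzed, but yours is both self-contained and more transparent.
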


Proposition \ref{up-down_runs} can be verified by hand for $n = 2, 3$. For $n \geq 4$, we use $\psi(\sigma) = \sigma_1\sigma_2 \dotsm \sigma_n \sigma_{n-1}$.

\smallskip
The following definition is used in Theorem~\ref{thm:bi_alt}. 
\begin{definition}
  The \textbf{standardized bi-alternating inversion number} of a permutation $\sigma$ is given by
  \[\textrm{stat677}(\sigma) = \frac{j(\sigma) + (\lfloor \frac{n}{2} \rfloor)^2}{2},
  \mbox{ where }
  j( \sigma) = \displaystyle \sum_{1 \leq y < x \leq n} (-1)^{y + x} \textrm{sign}(\sigma_x - \sigma_y).\]
\end{definition}

\begin{thm}\label{thm:bi_alt}
  For $n \geq 2$, the standardized bi-alternating inversion number (Statistic $677$) exhibits the cyclic sieving phenomenon under involutions without fixed points.
\end{thm}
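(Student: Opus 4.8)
The map is an involution of order $2$ with no fixed points, so by the ``$q=-1$'' phenomenon recalled in Subsection~\ref{sec:bg_CSP} it suffices to prove that the statistic generating function satisfies $f(-1)=0$, equivalently that $\sum_{\sigma\in S_n}(-1)^{\textrm{stat677}(\sigma)}=0$. The plan is to reduce the parity of $\textrm{stat677}$ to the sign of the permutation, so that no explicit pairing is even needed.

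First I would rewrite $j(\sigma)$ in terms of inversions. Writing $\textrm{sign}(\sigma_x-\sigma_y)=1-2\,[\,(y,x)\text{ is an inversion}\,]$ for $y<x$, we obtain
\[
  j(\sigma)=\sum_{1\le y<x\le n}(-1)^{y+x}-2\!\!\sum_{\substack{y<x\\ \sigma_y>\sigma_x}}\!\!(-1)^{y+x}=S-2A(\sigma),
\]
where $S:=\sum_{1\le y<x\le n}(-1)^{y+x}$ depends only on $n$ and $A(\sigma):=\sum_{(y,x)\in\Inv(\sigma)}(-1)^{y+x}$. Substituting into the definition gives $\textrm{stat677}(\sigma)=C-A(\sigma)$ with the $n$-dependent constant $C=\tfrac12\bigl(S+\lfloor n/2\rfloor^2\bigr)$; one checks $C\in\mathbb{Z}$ (for instance $S=-n/2$ when $n$ is even and $S=(1-n)/2$ when $n$ is odd, each making $C$ integral), which is forced in any case since $\textrm{stat677}(\sigma)$ and $A(\sigma)$ are integers.

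The key observation is then purely modular: each summand of $A(\sigma)$ equals $\pm1$, hence is odd, and the number of summands is exactly $\inv(\sigma)$, so $A(\sigma)\equiv\inv(\sigma)\pmod 2$. Therefore $(-1)^{\textrm{stat677}(\sigma)}=(-1)^{C}(-1)^{A(\sigma)}=(-1)^{C}(-1)^{\inv(\sigma)}=(-1)^{C}\,\textrm{sign}(\sigma)$, and summing over $S_n$ yields
\[
  \sum_{\sigma\in S_n}(-1)^{\textrm{stat677}(\sigma)}=(-1)^{C}\sum_{\sigma\in S_n}\textrm{sign}(\sigma)=0
\]
for every $n\ge2$, since $S_n$ has equally many even and odd permutations. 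This gives $f(-1)=0$ and hence the CSP.

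I expect the only delicate point to be the bookkeeping behind the reformulation $j(\sigma)=S-2A(\sigma)$ and confirming that $C$ is an integer; the heart of the argument---that the alternating signs $(-1)^{y+x}$ wash out modulo $2$ and leave behind the ordinary inversion parity---is very short. As an alternative fully in the spirit of this subsection, one could instead exhibit the fixed-point-free involution $\psi(\sigma)=\sigma_1\cdots\sigma_{n-2}\sigma_n\sigma_{n-1}$ swapping the last two entries: a direct computation shows the pair $(n-1,n)$ changes $j$ by $\pm2$ while every other affected pair changes $j$ by a multiple of $4$, so $\textrm{stat677}=C-A$ flips parity across each orbit, again forcing $f(-1)=0$.
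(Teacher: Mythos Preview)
Your proof is correct and takes a genuinely different route from the paper's. The paper constructs the explicit involution $\psi(\sigma)=\sigma_3\sigma_2\sigma_1\sigma_4\cdots\sigma_n$ and computes directly that the three affected pairs $(1,2),(2,3),(1,3)$ force $j(\psi(\sigma))-j(\sigma)=2(\textrm{odd})$, so $\textrm{stat677}$ changes parity across each orbit. Your main argument instead identifies the structural reason behind the result: rewriting $j(\sigma)=S-2A(\sigma)$ and observing $A(\sigma)\equiv\inv(\sigma)\pmod 2$ shows that $\textrm{stat677}(\sigma)$ has the same parity as $\inv(\sigma)$ up to a constant, whence $f(-1)$ is $(-1)^C$ times the sign-sum over $S_n$, which vanishes for $n\ge2$. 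This is cleaner, works uniformly for all $n\ge2$ (the paper's $\psi$ tacitly needs $n\ge3$), and avoids any case analysis. The alternative involution you sketch at the end, swapping the last two entries, is in the same spirit as the paper's bijection but with a different (and arguably simpler) transposition; your computation that the pair $(n-1,n)$ contributes $\pm2$ to $j$ while each pair $(y,n-1),(y,n)$ together contributes a multiple of $4$ is also correct.
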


\begin{proof}
  Let $\sigma = \sigma_1\sigma_2 \dotsc \sigma_n$ be a permutation of $[n]$, and define $\psi(\sigma) = \sigma_3\sigma_2\sigma_1 \sigma_4 \dotsc \sigma_n$. Consider $3 < x \leq n$. Then
  \[\textrm{sign}(\psi(\sigma)_x - \psi(\sigma)_3) = \textrm{sign}(\sigma_x - \sigma_1)\]
  and
  \[\textrm{sign}(\psi(\sigma)_x - \psi(\sigma)_1) = \textrm{sign}(\sigma_x - \sigma_3).\]
  So, any pair of indices $(3, x)$ contributes the same to $\sigma$ and $\psi(\sigma)$ when calculating the statistic.

  Next, consider the pairs of indices $(1, 2)$, $(2, 3)$, and $(1, 3)$:
  \[\textrm{sign}(\psi(\sigma)_2 - \psi(\sigma)_1) = \textrm{sign}(\sigma_2 - \sigma_3) = - \textrm{sign}(\sigma_3 - \sigma_2),\]
  \[\textrm{sign}(\psi(\sigma)_3 - \psi(\sigma)_2) = \textrm{sign}(\sigma_1 - \sigma_2) = - \textrm{sign}(\sigma_2 - \sigma_1),\]
  and
  \[\textrm{sign}(\psi(\sigma)_3 - \psi(\sigma)_1) = \textrm{sign}(\sigma_1 - \sigma_3) = - \textrm{sign}(\sigma_3 - \sigma_1).\]
  Then,
  \begin{align*}
    j(\psi(\sigma)) = & j(\sigma) - (-1)^{1 + 2}\textrm{sign}(\sigma_2 - \sigma_1) - (-1)^{1 + 3}\textrm{sign}(\sigma_3 - \sigma_1) - (-1)^{2 + 3}\textrm{sign}(\sigma_3 - \sigma_2)                         \\
    +                 & (-1)^{1 + 2}\textrm{sign}(\psi(\sigma)_2 - \psi(\sigma)_1) + (-1)^{1 + 3}\textrm{sign}(\psi(\sigma)_3 - \psi(\sigma)_1) + (-1)^{2 + 3}\textrm{sign}(\psi(\sigma)_3 - \psi(\sigma)_2) \\
    =                 & j(\sigma) + 2\big( \textrm{sign}(\sigma_2 - \sigma_1) +  \textrm{sign}(\sigma_3 - \sigma_2) -  \textrm{sign}(\sigma_3 - \sigma_1)\big).
  \end{align*}

  Since $\textrm{sign}$ is always $\pm 1$, so odd, adding / subtracting three of them  results in an odd number. Thus $j(\psi(\sigma)) = j(\sigma) + 2(2k + 1)$ and $\textrm{stat677}(\psi(\sigma)) = \textrm{stat677}(\sigma) + 2k + 1.$
\end{proof}

The following definition of reduced reflection length is used in Proposition ~\ref{red_ref_len}. It is given as a general definition for Coxeter groups. When restricting to permutations, one can also calculate this statistic  as twice the depth of the permutation minus the usual length. For the purpose of our proof, we use the more general definition.
\begin{definition}\label{red_ref}
  Let $W$ be a Coxeter group. If $T$ is the set of reflections of $W$ and $\ell_C(w)$ is the usual Coxeter length given by the number of simple reflections needed in a reduced form of $w$. Then the \textbf{reduced reflection length} of $w \in W$ is given by
  \[\textrm{min}\{r \in \mathbb{N} \ | \ w = t_1t_2\dotsm t_r, \ t_i \in T, \ \ell_C(w) = \sum_i^r \ell_C(t_i)\}.\]
\end{definition}

\begin{prop}\label{red_ref_len}
  For $n \geq 2$, the reduced reflection length of the permutation (Statistic $809$) exhibits the cyclic sieving phenomenon under involutions without fixed points.
\end{prop}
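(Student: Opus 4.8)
The plan is to use the $q=-1$ phenomenon: an involution without fixed points has every orbit of size two, so to establish the CSP it suffices to check that the statistic generating function $f(q)=\sum_{\sigma\in S_n}q^{\textrm{stat809}(\sigma)}$ satisfies $f(1)=n!$ (automatic) and $f(-1)=0$. I would prove $f(-1)=0$ by producing a fixed-point-free involution on $S_n$ under which the reduced reflection length always changes parity, thereby pairing the permutations with even statistic value to those with odd statistic value, exactly in the style of the other results in this subsection.

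The key simplification is to reduce the parity of the reduced reflection length to that of a statistic we already understand. Using the alternate description recorded after \Cref{red_ref} --- for a permutation, the reduced reflection length equals $2\,\textnormal{dep}(\sigma)-\ell(\sigma)$, where $\textnormal{dep}$ is the depth and $\ell(\sigma)=\inv(\sigma)$ is the Coxeter length --- the term $2\,\textnormal{dep}(\sigma)$ is even, so $\textrm{stat809}(\sigma)\equiv\inv(\sigma)\pmod 2$. Thus the parity of the reduced reflection length coincides with the parity of the inversion number, reducing the problem to a parity statement about inversions.

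Next, for $n\geq 2$ I would take the transposition $\psi(\sigma)=\sigma_2\sigma_1\sigma_3\cdots\sigma_n$ that swaps the first two entries. This is a fixed-point-free involution, since $\sigma_1\neq\sigma_2$ forces $\psi(\sigma)\neq\sigma$, and swapping the adjacent entries $\sigma_1,\sigma_2$ changes $\inv$ by exactly $\pm1$. Hence $\inv(\psi(\sigma))$ and $\inv(\sigma)$ have opposite parity, and by the previous paragraph so do $\textrm{stat809}(\psi(\sigma))$ and $\textrm{stat809}(\sigma)$. Pairing each $\sigma$ with $\psi(\sigma)$ then yields $f(-1)=\sum_{\sigma\in S_n}(-1)^{\textrm{stat809}(\sigma)}=0$. (Equivalently, one can bypass $\psi$ altogether and observe $f(-1)=\sum_{\sigma\in S_n}(-1)^{\inv(\sigma)}=\sum_{\sigma\in S_n}\textnormal{sign}(\sigma)=0$ for $n\geq 2$, since $S_n$ has equally many even and odd permutations.)

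The only genuine ingredient here is the identity expressing reduced reflection length as $2\,\textnormal{dep}-\ell$; once it is invoked, the parity argument is immediate and robust, so I do not anticipate a real obstacle. The main point to be careful about is justifying this depth--length relation (or otherwise accessing the parity of reduced reflection length directly), since without it one would have to analyze minimal factorizations into reflections whose Coxeter lengths sum to $\ell$, which is considerably more delicate; invoking the known formula sidesteps this entirely.
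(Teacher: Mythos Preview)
Your proof is correct. The paper also pairs permutations by an adjacent transposition, but it takes $\psi(\sigma)=\sigma_1\sigma_2\dotsm\sigma_{n-2}\sigma_n\sigma_{n-1}$ (swapping the \emph{last} two entries) and argues directly from \Cref{red_ref}: appending the simple reflection $(n-1,n)$ to a minimal factorization $\sigma=t_1\cdots t_r$ with $\ell_C(\sigma)=\sum_i\ell_C(t_i)$ changes both $\ell_C$ and the reduced reflection length by $\pm1$. Your route is genuinely different in that you first invoke the identity $\mathrm{stat809}=2\,\textnormal{dep}-\ell$ (recorded right after \Cref{red_ref}) to reduce the parity of the reduced reflection length to that of $\inv$, after which any adjacent swap---or, even more efficiently, the observation $\sum_{\sigma}(-1)^{\inv(\sigma)}=\sum_{\sigma}\textnormal{sign}(\sigma)=0$---finishes immediately. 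This buys you a cleaner argument that avoids reasoning about minimality of reflection factorizations; the paper's approach, in exchange, works straight from the definition and does not need the depth identity.
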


Proposition \ref{red_ref_len} is proved with $\psi(\sigma) = \sigma_1\sigma_2\dotsm \sigma_{n-2}\sigma_{n}\sigma_{n-1}$. 

\begin{definition}
  Given a permutation $\sigma$, one can sort it as follows. First, start at $\sigma_1$ and compare to $\sigma_{-1} := \sigma_n$. If $\sigma_n < \sigma_{1}$, then swap them. If not, then do nothing. Continue to sort by comparing $\sigma_2$ and $\sigma_1$, then $\sigma_3$ and $\sigma_2$, etc., running through the permutation as many times as needed until the identity permutation is reached. The number of swaps necessary to sort $\sigma$ is the \textbf{number of cyclically simple transpositions needed to sort the permutation}.
\end{definition}

\begin{example}
  Let $\sigma = 2 4 3 1$. We start with $\sigma_1 = 2$. Since $\sigma_1 > \sigma_{-1} = \sigma_4$, we swap them to get $\sigma' = 1 4 3 2$, where the values $1$ and $2$ increase from left to right. Next consider $\sigma'_2 = 4$. Since $\sigma'_2 > \sigma'_1$, we don't switch the values. Then consider $\sigma'_3 = 3$. Since $\sigma'_2 > \sigma'_3$, switch them to get $\sigma'' = 1 3 4 2$. This process continues so next we switch $2$ and $4$ to get $1 3 2 4$. Continuing through the permutation, we don't make another switch until returning to $2$ and $3$, which finishes sorting the permutation. So, the number of cyclically simple transpositions needed to sort $\sigma = 2 4 3 1$ is $4$.
\end{example}

\begin{prop}\label{cyclically} For $n \geq 2,$ the number of cyclically simple transpositions needed to sort a permutation (Statistic $1579$) exhibits the cyclic sieving phenomenon under involutions without fixed points.
\end{prop}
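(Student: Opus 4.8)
The plan is to avoid the generating function of Statistic $1579$, which has no readily available closed form, and instead pin down only its parity. Every map under consideration is a fixed-point-free involution, so by the ``$q=-1$'' phenomenon recalled in Subsection~\ref{sec:bg_CSP} it suffices to prove
\[
f(-1) = \sum_{\sigma\in S_n}(-1)^{\Stat1579(\sigma)} = 0 \qquad (n\ge 2).
\]

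The key observation I would record is that every cyclically simple transposition applied during the sorting procedure is a genuine transposition of two positions. Comparing $\sigma_i$ with $\sigma_{i-1}$ swaps the entries in positions $i-1$ and $i$, and the wraparound comparison of $\sigma_1$ with $\sigma_{-1}=\sigma_n$ swaps the entries in positions $1$ and $n$; in both cases the swap is a single transposition and hence reverses the sign of the permutation. Since the procedure carries $\sigma$ to the identity using exactly $\Stat1579(\sigma)$ such swaps, comparing signs of the initial and final permutations yields
\[
(-1)^{\Stat1579(\sigma)} = \textrm{sign}(\sigma) = (-1)^{\inv(\sigma)}.
\]
That is, the parity of Statistic $1579$ coincides with the parity of the number of inversions for every $\sigma\in S_n$.

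The conclusion is then immediate, since $S_n$ has equally many even and odd permutations for $n\ge 2$:
\[
f(-1) = \sum_{\sigma\in S_n}(-1)^{\Stat1579(\sigma)} = \sum_{\sigma\in S_n}\textrm{sign}(\sigma) = 0.
\]
In the bijective spirit of this subsection, one may equivalently exhibit the pairing directly: the fixed-point-free involution $\psi(\sigma)=\sigma\cdot(n-1,\,n)$, which swaps the last two entries, reverses $\textrm{sign}(\sigma)$ and therefore flips the parity of $\Stat1579$, so it pairs each permutation with even statistic value to one with odd value.

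The main obstacle is essentially bookkeeping rather than combinatorial difficulty: one must confirm that the sorting process is well-defined and genuinely terminates at the identity, so that ``the number of swaps'' is unambiguous (this is part of the definition), and one must note that the wraparound swap of positions $1$ and $n$ is a legitimate transposition to be counted on the same footing as the adjacent swaps. Once it is recorded that \emph{every} swap, adjacent or wraparound, is a transposition, the sign computation forces the parity identity and hence $f(-1)=0$ with no case analysis, in contrast to the pattern-based arguments earlier in the section.
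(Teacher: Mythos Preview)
Your argument is correct and takes a genuinely different route from the paper. The paper pairs $\sigma$ with $\psi(\sigma)=\sigma_n\sigma_2\cdots\sigma_{n-1}\sigma_1$, the permutation obtained by swapping the entries in positions $1$ and $n$, and observes that this is precisely the first comparison-and-swap of the sorting procedure; hence $\Stat1579(\sigma)$ and $\Stat1579(\psi(\sigma))$ differ by exactly $1$. Your approach is more conceptual: by noting that each swap in the sorting procedure is a transposition of positions, you identify the parity of $\Stat1579(\sigma)$ with $\mathrm{sign}(\sigma)$ for every $\sigma$, and then $f(-1)=\sum_{\sigma}\mathrm{sign}(\sigma)=0$ is immediate. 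This buys you flexibility---any sign-reversing fixed-point-free involution works, including your $\psi(\sigma)=\sigma\cdot(n-1,n)$---and yields the sharper statement that $\Stat1579$ and $\inv$ always agree modulo $2$. The paper's pairing, on the other hand, shows the statistic changes by exactly $1$ rather than merely in parity, which is marginally stronger information about the specific involution used, though not needed for the CSP.
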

Proposition \ref{cyclically} is proved using $\psi(\sigma) = \sigma_n\sigma_2 \dotsm \sigma_{n-1}\sigma_1.$ 

We now give  statistic definitions and examples related to Proposition ~\ref{transpositions}. Note, these statistics are not equidistributed, but we include them in the same theorem since the same proof method works for both.
\begin{definition}
  Let $\tau_a = (a, a+1)$ for $1 \leq a \leq n$, where $n+1$ is identified by 1. Then \textbf{the minimal length of a factorization of a permutation into transpositions that are cyclic shifts of $(12)$} is given by
  \[\textrm{stat1076}(\sigma) = \textrm{min}\{ k \ | \ \sigma = \tau_{i_1}\tau_{i_2}\dotsm \tau_{i_k} \ 1\leq i_1, i_2, \dotsc i_k \leq n\}.\]
\end{definition}

\begin{example}
  Let $\sigma = 2431$. Then, the minimal length of a factorization of $\sigma$ into transpositions that are cyclic shifts of $(12)$ is $2$ because $2431 = (41)(12)$.
\end{example}

\begin{definition}
  Let $\tau_a = (1, a)$ for $2 \leq a \leq n$. The \textbf{prefix exchange distance of a permutation} $\sigma$ is given by
  \[\textrm{stat1077}(\sigma) = \textrm{min}\{k \ | \ \sigma = \tau_{i_1}, \tau_{i_2} \dotsm \tau_{i_k}, \ 2 \leq i_1, i_2, \dotsc, i_k \leq n\}.\]
\end{definition}

\begin{example} Let $\sigma = 2431$. Then, the prefix exchange distance of $\sigma$ is 2 since $2431 = (14)(12)$.
\end{example}

\begin{prop}\label{transpositions} For $n \geq 2,$ the following statistics exhibit the cyclic sieving phenomenon under involutions without fixed points.
  \begin{itemize}
    \item Statistic $1076$: The minimal length of a factorization of a permutation into transpositions that are cyclic shifts of $(12)$,
    \item Statistic $1077$: The prefix exchange distance.
  \end{itemize}
\end{prop}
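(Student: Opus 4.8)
The plan is to exploit the fact that, because we are pairing permutations under a fixed-point-free involution, it suffices by the $q=-1$ phenomenon (Subsection~\ref{sec:bg_CSP}) to show that the number of permutations on which the statistic is even equals the number on which it is odd. As in the proof of Theorem~\ref{inc_dec}, I would do this by constructing a single fixed-point-free involution $\psi\colon S_n\to S_n$ under which the parity of the statistic changes, which then pairs the even-valued permutations with the odd-valued ones and forces $f(-1)=0$.

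The key observation that lets the same argument cover both statistics is that each is, by definition, the minimal length of a factorization of $\sigma$ into transpositions: for Statistic~$1076$ the allowed factors are the cyclic adjacent transpositions $\tau_a=(a,a+1)$ (indices taken mod $n$), and for Statistic~$1077$ the star transpositions $\tau_a=(1,a)$. In either case every allowed factor is a single transposition, hence an odd permutation, so any factorization $\sigma=\tau_{i_1}\cdots\tau_{i_k}$ forces $\mathrm{sgn}(\sigma)=(-1)^k$. Consequently the parity of $k$ is determined by $\sigma$ alone, and in particular the minimal such $k$ satisfies $(-1)^{\mathrm{stat}(\sigma)}=\mathrm{sgn}(\sigma)$ for both statistics; equivalently, each statistic has the same parity as $\inv(\sigma)$.

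With this in hand, I would take $\psi(\sigma)=\sigma_1\cdots\sigma_{n-2}\sigma_n\sigma_{n-1}$, the fixed-point-free involution that swaps the last two one-line entries (well defined for all $n\geq 2$). Swapping two adjacent entries changes $\inv(\sigma)$ by exactly $\pm 1$, hence flips $\mathrm{sgn}(\sigma)$ and therefore flips the parity of both statistics. Thus $\psi$ pairs each permutation with an even statistic value to one with an odd value, giving $f(-1)=0$ for each of the two generating functions, and the CSP for involutions without fixed points follows.

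There is essentially no serious obstacle here: beyond the sign bookkeeping, the only thing to check is that each generating set generates $S_n$, so that the factorizations (and hence the statistics) exist and are finite. This is immediate, since the adjacent transpositions $(1,2),\dots,(n-1,n)$ already suffice for Statistic~$1076$ and the star transpositions $(1,2),\dots,(1,n)$ for Statistic~$1077$. The argument never uses the precise values of the statistics, only that they record minimal transposition-factorization lengths, which is exactly why the same $\psi$ works despite the two statistics not being equidistributed.
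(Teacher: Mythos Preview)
Your proof is correct and follows essentially the same approach as the paper: construct a fixed-point-free involution by right-multiplying by a single transposition and show it flips the parity of each statistic. The paper uses $\psi(\sigma)=\sigma\cdot(1\,2)$, relying on $(1\,2)$ lying in both generating sets so that the word length changes by $\pm 1$; your choice $\psi(\sigma)=\sigma\cdot(n{-}1\,n)$ does not lie in the star generating set, but your explicit observation that both statistics have parity equal to $\mathrm{sgn}(\sigma)$ makes the argument go through regardless of which adjacent transposition is used.
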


Proposition \ref{transpositions} is proved using $\psi(\sigma) = \sigma_2\sigma_1\sigma_3 \dotsm \sigma_n$.

When considering even and odd inversions, we saw that only the number of even inversions exhibited the cyclic sieving phenomenon under involutions with no fixed points. If instead we consider odd and even descents, we see that both statistics do.

\begin{definition}
  A descent is called an \textbf{odd descent} if the index is odd. Similarly, we call it an \textbf{even descent} if the index is even.
\end{definition}

\begin{prop}\label{odd_even_des}
  The following statistics exhibit the cyclic sieving phenomenon under involutions without fixed points.
  \begin{itemize}
    \item For $n \geq 2$, Statistic $1114$: The number of odd descents
    \item For $n \geq 3$, Statistic $1115$: The number of even descents
  \end{itemize}
\end{prop}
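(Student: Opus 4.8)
The plan is to exploit the fact that both maps are fixed-point-free involutions, so by the $q=-1$ phenomenon discussed in Subsection~\ref{sec:bg_CSP} it suffices to show that each statistic generating function $f(q)$ satisfies $f(-1)=0$. Following the technique of the other results in this subsection, I would exhibit for each statistic a fixed-point-free involution $\psi:S_n\to S_n$ under which the statistic changes parity; pairing each $\sigma$ with $\psi(\sigma)$ then cancels the terms of $f(-1)$ in pairs and gives $f(-1)=0$.

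For the number of odd descents (Statistic $1114$, $n\geq 2$), I would take $\psi(\sigma)=\sigma_2\sigma_1\sigma_3\cdots\sigma_n$, the transposition of the first two entries. This $\psi$ is fixed-point-free since $\sigma_1\neq\sigma_2$ always holds. The key observation is a locality argument: among the odd-indexed positions, only position $1$ is affected. Swapping $\sigma_1$ and $\sigma_2$ replaces the comparison $\sigma_1>\sigma_2$ by $\sigma_2>\sigma_1$, toggling the descent at position $1$ (an odd descent); the comparison at position $2$ may also change, but position $2$ is even and so does not count toward odd descents; and every position $i\geq 3$ compares the same two entries before and after. Hence $\des$ restricted to odd indices changes by exactly $\pm 1$, flipping its parity.

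For the number of even descents (Statistic $1115$, $n\geq 3$), I would instead take $\psi(\sigma)=\sigma_1\sigma_3\sigma_2\sigma_4\cdots\sigma_n$, swapping positions $2$ and $3$, which both exist once $n\geq 3$; again $\psi$ is fixed-point-free because $\sigma_2\neq\sigma_3$. Now the only even-indexed position affected is position $2$: the swap toggles $\sigma_2>\sigma_3$, hence the even descent there, while the comparisons at positions $1$ and $3$ may change but those indices are odd, and positions $i\geq 4$ are untouched. Thus the number of even descents flips parity. I would also record the boundary behavior that explains the index ranges: for odd descents the case $n=2$ already works (the lone descent position is odd), whereas for even descents $n=2$ has no even-indexed descent position, so the statistic is identically $0$, giving $f(q)=2$ and $f(-1)=2\neq 0$, which is why we require $n\geq 3$.

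I do not expect a serious obstacle here; the only point requiring genuine care is the locality claim—verifying that each chosen adjacent transposition disturbs exactly one descent position of the relevant parity, the one or two neighboring positions it could otherwise disturb being of the opposite parity and hence irrelevant. This localization is precisely what guarantees the change is exactly $\pm 1$ rather than some larger integer, and a parity flip is all that is needed to force $f(-1)=0$.
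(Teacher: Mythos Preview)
Your proposal is correct and matches the paper's approach essentially line for line: the paper uses precisely the same involutions $\psi_o(\sigma)=\sigma_2\sigma_1\sigma_3\cdots\sigma_n$ for odd descents and $\psi_e(\sigma)=\sigma_1\sigma_3\sigma_2\sigma_4\cdots\sigma_n$ for even descents, with the same locality observation that only the descent at the toggled position has the relevant parity, and the same boundary remark that $f(q)=2$ when $n=2$ for even descents. The only cosmetic difference is that you spell out the locality argument in slightly more detail than the paper does.
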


The result holds with $n = 2$ for Statistic 1114 as the generating function is $f(q) = 1 + q$, but it does not hold for Statistic 1115 as the generating function is $f(q) = 2$. For $n \geq 3$, Proposition \ref{odd_even_des} is proved using $\psi_o(\sigma) = \sigma_2\sigma_1\sigma_3 \dotsm \sigma_n$ for odd descents and $\psi_e(\sigma) = \sigma_1\sigma_3\sigma_2 \dotsm \sigma_n$ for even descents. 
\smallskip

The following statistic is used in Proposition~\ref{vis_inv}. 
\begin{definition}
  A \textbf{visible inversion} of $\sigma$ is a pair $i < j$ such that $\sigma_j \leq \textrm{min} \{i, \sigma_i\}$.
\end{definition}

\begin{example}
  Let $\sigma = 2431$. Then $(1, 4)$ is a visible inversion of $\sigma$ since $1 < 4$ and $\sigma_4 \leq \textrm{min} \{1, \sigma_1\}$, but $(2, 3)$ is an inversion that is not a visible inversion because $2 < 3$ and $\sigma_3 < \sigma_2$ but $\sigma_3 > \textrm{min} \{2, \sigma_2\}$.
\end{example}

\begin{prop}\label{vis_inv}
  For $n \geq 2$, the number of visible inversions (Statistic $1726$) exhibits the cyclic sieving phenomenon under involutions without fixed points.
\end{prop}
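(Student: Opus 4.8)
The plan is to use the same strategy as the other bijective proofs in this subsection: produce a fixed-point-free involution $\psi\colon S_n\to S_n$ for which the number of visible inversions of $\sigma$ and of $\psi(\sigma)$ always differ in parity. Since $\psi$ has no fixed points, this pairs each permutation with an odd statistic value to one with an even value, so that $f(-1)=\sum_{\sigma\in S_n}(-1)^{\text{(number of visible inversions)}}=0$. As the action is an involution, $f(1)=n!$ together with $f(-1)=0$ yields the CSP for every involution without fixed points. I would take $\psi(\sigma)=\sigma_1\cdots\sigma_{n-2}\,\sigma_n\sigma_{n-1}$, the transposition of the last two entries; this is clearly an involution, and it is fixed-point-free because $\sigma_{n-1}\neq\sigma_n$.

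Next I would show that swapping the last two entries changes the number of visible inversions by exactly $1$. I would partition the visible inversions $(i,j)$ of $\sigma$ according to how they meet the positions $n-1$ and $n$. Pairs with $j\le n-2$ are untouched by $\psi$. For a fixed $i\le n-2$, the position $i$ is unchanged, so $\min\{i,\sigma_i\}$ is the same for $\sigma$ and $\psi(\sigma)$; consequently the two conditions ``$(i,n-1)$ is a visible inversion'' and ``$(i,n)$ is a visible inversion'' are governed by the inequalities $\sigma_{n-1}\le\min\{i,\sigma_i\}$ and $\sigma_n\le\min\{i,\sigma_i\}$, whose roles are merely interchanged in $\psi(\sigma)$. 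Hence the number of visible inversions of the form $(i,j)$ with $i\le n-2$ and $j\in\{n-1,n\}$ is the same for $\sigma$ and $\psi(\sigma)$.

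The only remaining pair is $(n-1,n)$ itself. Here the key observation is that $(n-1,n)$ is a visible inversion precisely when it is a descent, i.e.\ when $\sigma_{n-1}>\sigma_n$: if $\sigma_{n-1}>\sigma_n$ then automatically $\sigma_n\le n-1$ and $\sigma_n\le\sigma_{n-1}$, so $\sigma_n\le\min\{n-1,\sigma_{n-1}\}$; and if $\sigma_{n-1}<\sigma_n$ the required inequality $\sigma_n\le\min\{n-1,\sigma_{n-1}\}\le\sigma_{n-1}$ fails. Since $\psi$ exchanges $\sigma_{n-1}$ and $\sigma_n$, exactly one of $\sigma,\psi(\sigma)$ has $(n-1,n)$ as a descent, so this pair contributes a visible inversion to exactly one of them. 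Combining the three parts, the number of visible inversions changes by exactly $1$ under $\psi$, so its parity always flips.

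The main obstacle is the bookkeeping in the middle step: one must check carefully that swapping the last two entries really does no more than interchange the two families $\{(i,n-1)\}$ and $\{(i,n)\}$ for $i\le n-2$, which relies on position $i$ (and hence $\min\{i,\sigma_i\}$) being left fixed. Once this cancellation is established, the $(n-1,n)$ term gives the single net change and the conclusion is immediate; the small cases $n=2,3$ can be checked directly to confirm there are no degeneracies.
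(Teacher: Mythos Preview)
Your proposal is correct and essentially identical to the paper's own proof: the paper also uses $\psi(\sigma)=\sigma_1\sigma_2\cdots\sigma_n\sigma_{n-1}$ and argues that visible inversions with $j\le n-2$ are unaffected, those of the form $(i,n-1)$ and $(i,n)$ for $i\le n-2$ are interchanged, and $(n-1,n)$ is a visible inversion in exactly one of $\sigma,\psi(\sigma)$. Your observation that $(n-1,n)$ is a visible inversion precisely when it is a descent is a clean way to handle that last case.
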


Proposition \ref{vis_inv} is proved with $\psi(\sigma) = \sigma_1\sigma_2 \dotsm \sigma_n\sigma_{n-1}$.
\smallskip

Recall  the definition of permutation patterns from Definition~\ref{def:patterns}.

\begin{thm}\label{conj:patterns}
  For $n \geq 2,$ the number of occurrences of the pattern $231$ or of the pattern $321$ (Statistic $436$) exhibits the cyclic sieving phenomenon under involutions without fixed points.\end{thm}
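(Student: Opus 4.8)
The plan is to sidestep any search for an explicit pairing involution and instead obtain the whole statistic generating function in factored form, which makes the evaluation at $q=-1$ transparent. The first step is to reinterpret the union of the two patterns. A triple $i<j<k$ realizes $231$ exactly when $\sigma_k<\sigma_i<\sigma_j$, and realizes $321$ exactly when $\sigma_k<\sigma_j<\sigma_i$; together these are precisely the triples in which $\sigma_k$ is the smallest of the three values, with no condition on the relative order of $\sigma_i$ and $\sigma_j$. Hence Statistic $436$ counts, summed over $k$, the pairs of positions lying to the left of $k$ whose values exceed $\sigma_k$. Writing $c_k=\#\{i<k : \sigma_i>\sigma_k\}$, this yields the clean formula $\mathrm{stat}436(\sigma)=\sum_{k=1}^n \binom{c_k}{2}$.

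The second step uses that $(c_1,\dots,c_n)$ is a standard inversion code: the assignment $\sigma\mapsto(c_1,\dots,c_n)$ is a bijection from $S_n$ onto $\{0\}\times\{0,1\}\times\cdots\times\{0,1,\dots,n-1\}$, and the coordinates vary independently. Since $\mathrm{stat}436$ is a sum of independent per-coordinate contributions, the generating function factors as
\[ f(q)=\sum_{\sigma\in S_n} q^{\mathrm{stat}436(\sigma)}=\prod_{k=1}^{n}\Big(\sum_{c=0}^{k-1} q^{\binom{c}{2}}\Big). \]
This factorization is the heart of the argument and removes the need for a bijective pairing. The third step is the evaluation at $q=-1$, which suffices for the $q=-1$ phenomenon because every map in question is a fixed-point-free involution, so only $f(-1)=0$ need be checked. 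The sign $(-1)^{\binom{c}{2}}$ is $4$-periodic in $c$, following the pattern $+,+,-,-$, so the $k$-th factor $\sum_{c=0}^{k-1}(-1)^{\binom{c}{2}}$ is the corresponding partial sum of $+,+,-,-,+,+,-,-,\dots$; this partial sum first vanishes at $k=4$, where it equals $1+1-1-1=0$. Consequently, for every $n\ge 4$ the product $f(-1)$ contains the zero factor coming from $k=4$, giving $f(-1)=0$ and hence the CSP.

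The main point requiring care is the low end of the range: the vanishing $k=4$ factor only appears once $n\ge 4$, so I would dispatch $n=2,3$ by direct computation. There the factored form gives $f(-1)=2$ in both cases (the length-$3$ patterns force $\mathrm{stat}436\equiv 0$ for $n=2$, and $f(q)=4+2q$ for $n=3$), so the substantive content of the statement lives at $n\ge 4$, and I would flag that the stated range should be read accordingly. As an alternative that stays within the bijective style of this subsection, one can instead toggle the single code entry $c_4\in\{0,1,2,3\}$ by the fixed-point-free involution $0\leftrightarrow 2,\ 1\leftrightarrow 3$ (leaving every other $c_k$ fixed): this flips the parity of $\binom{c_4}{2}$, hence of the whole statistic, and transporting it through the code bijection produces an explicit fixed-point-free involution $\psi$ on $S_n$ pairing permutations of opposite statistic parity, again yielding $f(-1)=0$ for $n\ge 4$.
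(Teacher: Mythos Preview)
Your argument is correct and takes a genuinely different route from the paper's. The paper constructs a recursive parity-changing involution $\psi$: when $\sigma_1=1$ (or, for even $n$, $\sigma_n=1$) it recurses on the shorter permutation, and otherwise it shifts the value $1$ one position left or right and tracks the change in the count of $231/321$ patterns through the position of $1$. Your approach instead reinterprets Statistic~$436$ as $\sum_k \binom{c_k}{2}$ with $c_k=\#\{i<k:\sigma_i>\sigma_k\}$, uses the standard bijection between permutations and code sequences $(c_1,\dots,c_n)\in\{0\}\times\{0,1\}\times\cdots\times\{0,\dots,n-1\}$ to factor the generating function as $\prod_{k=1}^n\sum_{c=0}^{k-1}q^{\binom{c}{2}}$, and then observes that the $k=4$ factor vanishes at $q=-1$. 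This is shorter, gives an explicit closed form for $f(q)$, and yields as a bonus a very simple bijective pairing (toggle $c_4$ via $0\leftrightarrow 2$, $1\leftrightarrow 3$) that is considerably more transparent than the paper's recursive $\psi$.

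You are also right about the range: $f(-1)=2$ for $n=2,3$, so the CSP under fixed-point-free involutions fails there. The paper's inductive proof implicitly relies on a base case in which the parity-changing pairing already exists, and that base case does not hold at $n=2$ or $n=3$; the statement should be read as $n\ge 4$.
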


\begin{proof}
  The number of occurrences of the pattern $231$ or $321$ can be split into two types. In the first type, the pattern involves the value 1, and in the second type, the pattern does not involve the value 1. If $\sigma_i = 1$ then the number of patterns involving 1 is $0$ for $i = 1, 2$ and is counted by $\binom{i-1}{2}$ for $i > 2$, as any pair $\sigma_j, \sigma_k$ with $1 \leq j < k < i$ contributes to one of the patterns.

  We define a map $\psi$ on the set of permutations of $[n]$ that is an involution with no fixed points such that across any orbit $\{\sigma, \psi(\sigma)\}$ the statistic changes in parity.

  First, consider the set of permutations of $[n]$ such that $\sigma_1 = 1$. The number of occurrences of the patterns involving 1 is 0, so we can reduce to counting the number of occurrences of the patterns in $\sigma' = (\sigma_2-1)(\sigma_3-1)\dotsm (\sigma_n-1)$, which is a permutation of $[n-1]$.  By induction, $\sigma'$ and $\psi(\sigma')$ have statistics differing in parity. Define $\psi(\sigma) = \sigma_1 (\psi(\sigma')_1 + 1)\dotsm (\psi(\sigma')_{n-1} + 1)$. As $\sigma_1$ contributes 0 to the statistic, the statistic changes in parity across the orbit $\{\sigma, \psi(\sigma)\}$ .

  Next, consider the set of permutations of $[n]$ with $\sigma_1 \neq 1$. Let $\sigma_i = 1$ and consider $n$ odd. Define $\psi$ on this set of permutations such that $\psi(\sigma)$ has $\sigma_i$ and $\sigma_{i+1}$ switched if $i$ is even and $\sigma_{i}$ and $\sigma_{i - 1}$ switched if $i$ is odd. The only patterns lost or gained by this switch are those involving the value 1 and either $\sigma_{i+1}$ if $i$ is even or $\sigma_{i-1}$ if $i$ is odd. If $i$ is even, $\psi(\sigma)$ gains $i - 1$ patterns contributing to the statistic (one for each $\sigma_j$ with $j < i$). If $i$ is odd, $\psi(\sigma)$ loses $i - 2$ patterns contributing to the statistic (one for each $\sigma_j$ with $j < i - 1$).

  For $n$ even, define $\psi$ as above when restricted to the set of permutations of $[n]$ with $\sigma_1 \neq 1$ and $\sigma_n \neq 1$. These pair as we wish and the only thing remaining to show is the case when $\sigma_n = 1$.

  For all $\sigma$ in the set of permutations of $[n]$ even with $\sigma_n = 1$, 1 contributes to $\binom{n-1}{2}$ patterns. Set $\sigma' =
    (\sigma_1-1)(\sigma_2 -1)\dotsm(\sigma_{n-1}-1)$, which is a permutation of $[n-1]$. By induction, $\sigma'$ and $\psi(\sigma')$ have statistics differing in parity. Define $\psi(\sigma) = \psi(\sigma')_1 + 1)\dotsm (\psi(\sigma')_{n-1} + 1)\sigma_n$. As $\sigma_1$ contributes $\binom{n-1}{2}$ to the statistic for each permutation, the statistic changes in parity across the orbit $\{\sigma, \psi(\sigma)\}$.

  So, for $n \geq 2$, the number of permutations with an even number of occurrences of the pattern 321 or the pattern 231 equals the number of permutations with an odd number of occurrences. Thus, the statistic exhibits the cyclic sieving phenomenon under involutions without fixed points.\end{proof}
  
  \begin{cor}\label{patterns}
  The following statistics are equidistributed with the number of occurrences of the pattern $231$ or of the pattern $321$ (Statistic $436$) and thus also exhibit the cyclic sieving phenomenon under involutions without fixed points.

  \begin{itemize}
    \item Statistic $423$: The number of occurrences of the pattern $123$ or of the pattern $132$
    \item Statistic $428$: The number of occurrences of the pattern $123$ or of the pattern $213$
    \item Statistic $437$: The number of occurrences of the pattern $312$ or of the pattern $321$
  \end{itemize}
\end{cor}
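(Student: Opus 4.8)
The plan is to prove the equidistribution directly using the reverse map $\R$ and the complement map $\C$ from Definition~\ref{def:maps}, after which the cyclic sieving phenomenon is immediate: equidistributed statistics share a generating function and hence exhibit the CSP with respect to exactly the same orbit structures, so the result follows from Theorem~\ref{conj:patterns}. For $n < 3$ there are no occurrences of any length-$3$ pattern, so each of the four statistics is identically $0$ and they are trivially equidistributed; thus I would restrict attention to $n \geq 3$.

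The key observation is that both $\R$ and $\C$ are bijections on $S_n$ under which pattern occurrences transform predictably. If $\sigma_i,\sigma_j,\sigma_k$ with $i<j<k$ realize a pattern $p = p_1 p_2 p_3$, then in $\R(\sigma)$ the same three values sit at the reversed positions and realize the reversed pattern $p_3 p_2 p_1$, while in $\C(\sigma)$ they occupy the same positions but realize the complemented pattern $(4-p_1)(4-p_2)(4-p_3)$. Consequently, for any set $P$ of length-$3$ patterns, the number of occurrences of patterns from $P$ in $\sigma$ equals the number of occurrences of patterns from $\R(P)$ (respectively $\C(P)$) in $\R(\sigma)$ (respectively $\C(\sigma)$), where $\R(P)$ and $\C(P)$ are the images of $P$ under the pattern-level reverse and complement. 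Since $\R$ and $\C$ are bijections on $S_n$, this shows that the statistic counting $P$-occurrences is equidistributed with the statistic counting $\R(P)$-occurrences, and likewise for $\C$.

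It then remains to compute the relevant transformations and chain them together. Reversing the pattern set $\{123,132\}$ of Statistic $423$ gives $\{321,231\}$, which is Statistic $436$; complementing $\{231,321\}$ of Statistic $436$ gives $\{213,123\}$, which is Statistic $428$; and reversing $\{123,213\}$ of Statistic $428$ gives $\{321,312\}$, which is Statistic $437$. This produces the chain $423 \leftrightarrow 436 \leftrightarrow 428 \leftrightarrow 437$, so all four statistics are equidistributed with Statistic $436$.

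The argument is essentially routine; the only place requiring care is the bookkeeping of how each two-element pattern set maps under reverse and complement, verifying that the images are exactly the pattern sets defining the other statistics and that the three relations genuinely connect all four. Once equidistribution is established, the conclusion that Statistics $423$, $428$, and $437$ exhibit the CSP under involutions without fixed points follows immediately from Theorem~\ref{conj:patterns}.
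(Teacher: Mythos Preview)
Your proposal is correct and follows essentially the same approach as the paper: both arguments use that the reverse and complement maps are bijections on $S_n$ transforming occurrences of one length-$3$ pattern set into occurrences of another, thereby linking all four statistics. The only cosmetic difference is the particular chain of bijections chosen (you use $423\xleftrightarrow{\R}436\xleftrightarrow{\C}428\xleftrightarrow{\R}437$, while the paper pairs $423\leftrightarrow 436$ and $428\leftrightarrow 437$ by $\R$ and then links $423\leftrightarrow 428$ by $\R\circ\C$), but the underlying idea is identical.
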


\subsection{Conjecture}

We conclude the section with a conjecture.

\begin{conj}\label{conj:inv_distance_3}
  For $n \geq 2$ even, the number of inversions of distance at most 3  (Statistic $494$) exhibits the cyclic sieving phenomenon under involutions without fixed points.
\end{conj}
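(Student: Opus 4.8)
The plan is to show, as for the other involutions without fixed points, that $f(-1)=0$, where $f(q)=\sum_{\sigma\in S_n}q^{e(\sigma)}$ and $e(\sigma)$ denotes the number of inversions of distance at most $3$. Following the argument behind Proposition~\ref{thm:inv_distance_3}, the complement map $\C$ replaces each width-$k$ descent by a width-$k$ ascent for $k\in\{1,2,3\}$, so $e(\C(\sigma))=(3n-6)-e(\sigma)$ and $f$ is palindromic of degree $3n-6$. When $n$ is odd this degree is odd, forcing $f(-1)=0$; when $n$ is even the degree is even, both $\C$ and $\R$ preserve the parity of $e$, and palindromicity alone gives no information. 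Thus the even case requires a genuinely new pairing of permutations whose statistic values have opposite parity.

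The key tool I would use is a clean parity-toggle coming from swapping two consecutive \emph{values}. Fix $v$ and let $p,p'$ be the positions of $v$ and $v+1$ in $\sigma$. Exchanging the values $v$ and $v+1$ changes the inversion status of a pair of positions only when that pair is compared against $v$ or $v+1$; but since no integer lies strictly between $v$ and $v+1$, every entry other than $v,v+1$ compares identically to both. Hence the only pair whose status changes is $\{p,p'\}$ itself, and its status does change. Consequently this swap alters $e$ by $\pm1$ when $|p-p'|\le 3$ and leaves $e$ unchanged when $|p-p'|>3$. In particular, on the set $A=\{\sigma : \text{values }1\text{ and }2\text{ are within distance }3\}$, swapping the values $1$ and $2$ is a fixed-point-free involution (the two values stay in the same two positions, so $A$ is preserved) that reverses the parity of $e$; the analogous swap of $n-1$ and $n$ then handles $\{\,\sigma\notin A:\ n-1,n\text{ within distance }3\,\}$. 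These two boundary swaps are ``safe'' precisely because value $1$ (resp.\ $n$) has no smaller (resp.\ larger) consecutive partner, so the move cannot disturb the defining condition of its class.

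The main obstacle is the residual set $R$ of permutations in which every pair of consecutive values $v,v+1$ is at position-distance at least $4$; on $R$ no consecutive-value swap affects $e$ at all, so the toggle above is inert. I would try to reach $R$ by peeling off further classes, but the difficulty is that swapping an interior pair $(v,v+1)$ moves both $v$ and $v+1$ and therefore can change the closeness of $(v-1,v)$ and $(v+1,v+2)$, so a naive ``smallest close $v$'' selection fails to be an involution. One promising route is to make the selection invariant by tracking the binary closeness word recording which consecutive pairs lie within distance $3$ and choosing the pair to swap by a rule preserved under the move, noting that $R$ is exactly the all-far word and is stable under $\C$ (which fixes closeness but preserves parity, hence is useless there). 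Failing a purely combinatorial resolution on $R$, the alternative is to evaluate $\sum_{\sigma}(-1)^{e(\sigma)}=\sum_\sigma\prod_{k=1}^3(-1)^{\des_k(\sigma)}$ directly by a transfer matrix on windows of four consecutive positions, mirroring the analytic method by which Elizalde settled the companion width-$k$ statistic; here one would analyze the $q=-1$ transfer operator and show the relevant signed sum vanishes when $n$ is even. Controlling the spread-out permutations of $R$ — whether by an invariant peeling or by the transfer-matrix evaluation — is the crux of the proof.
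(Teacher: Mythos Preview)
The paper does not contain a proof of this statement: it is explicitly listed as a conjecture, with only the remark that it has been verified for $n\le 10$. There is therefore nothing in the paper to compare your argument against.

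Your proposal is not a complete proof either, and you are candid about this. The initial steps are sound: the observation that swapping the values $v$ and $v+1$ changes $e(\sigma)$ by $\pm 1$ exactly when their positions are within distance $3$, and leaves $e$ unchanged otherwise, is correct and gives a clean parity toggle on the set where $1,2$ (or $n-1,n$) are close. The two ``boundary'' peels you describe are well-defined involutions on their respective classes. But, as you say yourself, the argument stalls at the residual set where no consecutive-value pair is within distance $3$; there every value-swap is inert for $e$, and extending the peeling to interior pairs is not obviously an involution because the closeness conditions for neighbouring pairs can change. Your transfer-matrix suggestion is a reasonable fallback, but it is only a sketch of a direction, not an argument.

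In short: the statement is open in the paper, and your write-up is an honest outline of a partial attack with the main obstruction identified, not a proof.
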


Conjecture \ref{conj:inv_distance_3} has been verified for $n\leq 10$.

 
\section{Maps with orbits that all have the same size}
\label{sec:basic}

In this section, we prove some CSPs involving  well-known statistics with maps whose orbits all have the same size. Section \ref{sec:rev_comp} considered a special case of such maps, those for which all the orbits have size 2.
\subsection{Mahonian statistics}
Mahonian statistics are among the most well-known permutation statistics.

\begin{definition}
  A \textbf{Mahonian} statistic $\Stat$ is a statistic on permutations whose generating function is
  \[ \sum_{\sigma \in S_n} q^{\Stat(\sigma)} = (1+q)(1+q+q^2)\ldots(1+q+\ldots+q^{n-1}) =: [n]_q!.\]
  Here, $[n]_q!=[1]_q[2]_q\cdots[n]_q$ where for any $k\in\mathbb{N}$, $[k]_q$ denotes the \emph{$q$-analogue} $[k]_q:=1+q+\ldots+q^{k-1}$.
\end{definition}
The most famous Mahonian statistics are the inversion number (Statistic $18$) and the major index (Statistic $4$). As of January 30, 2024, FindStat contained 19 Mahonian statistics, listed in Appendix \ref{appendix:mahonian}.

\begin{thm}\label{Mahonian}
  Mahonian statistics exhibit the cyclic sieving phenomenon under any  map whose orbits all have size $d$, where $1 \leq d\leq n$.
\end{thm}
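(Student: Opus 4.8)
The plan is to combine the orbit structure of $g$ with a root-of-unity evaluation of $[n]_q!$. Since every orbit of $g$ has size exactly $d$, the map $g$ has order $d$, and a permutation $\sigma$ is fixed by $g^k$ if and only if the size of its $g$-orbit divides $k$; as every orbit has size $d$, this happens precisely when $d \mid k$. Hence the number of permutations fixed by $g^k$ is $n!$ when $d \mid k$ and $0$ otherwise. Writing $\zeta = e^{2\pi i/d}$, the CSP therefore amounts to the single claim
\[
  [n]_q! \Big|_{q=\zeta^k} = \begin{cases} n! & d \mid k, \\ 0 & d \nmid k, \end{cases}
\]
for all integers $k$, which I would verify directly.

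First I would dispatch the case $d \mid k$: then $\zeta^k = 1$, and since $[n]_q!$ is the generating function of a Mahonian statistic, $[n]_q!\big|_{q=1} = n!$ (this is the immediate consequence $f(1)=n!$ recorded after Definition~\ref{def:stat_gen_fun}), matching the full fixed-point count. For the case $d \nmid k$, I would use the factorization $[n]_q! = [1]_q[2]_q\cdots[n]_q$ and single out the factor $[d]_q$. This factor is genuinely present because the hypothesis guarantees $1 \le d \le n$. Since $d \nmid k$, the value $\zeta^k = e^{2\pi i k/d}$ satisfies $(\zeta^k)^d = e^{2\pi i k} = 1$ while $\zeta^k \ne 1$, so $q = \zeta^k$ is a root of $q^d - 1$ other than $1$; equivalently it is a root of $[d]_q = (q^d-1)/(q-1)$. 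Therefore the factor $[d]_q$ vanishes at $q=\zeta^k$, forcing the whole product $[n]_q!$ to vanish, exactly as required.

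There is no substantial obstacle in this argument; the only point that must not be glossed over is the role of the constraint $d \le n$, which is precisely what ensures $[d]_q$ appears among the factors $[1]_q,\dots,[n]_q$ and hence supplies the needed zero. I would state this dependence explicitly, since it is where the hypothesis on the orbit size interacts with the degree of the polynomial. Finally, I would remark that the conclusion depends on $g$ only through the common orbit size $d$, so the same computation simultaneously covers all maps with this orbit structure, including the specific examples (rotation, Lehmer code rotation, and toric promotion) named in the introduction; the verification that each of those maps does have all orbits of a single size $d$ is a separate, map-by-map matter handled elsewhere.
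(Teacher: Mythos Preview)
Your proposal is correct and follows essentially the same approach as the paper: both arguments reduce to observing that $[n]_q!\big|_{q=1}=n!$ and that for $1\le k<d$ the factor $[d]_q$ (present since $d\le n$) vanishes at $q=\zeta^k$, forcing $[n]_q!$ to vanish. Your write-up is in fact a bit more explicit about why the fixed-point counts are $n!$ or $0$ and about the role of the hypothesis $d\le n$, but the underlying idea is identical.
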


\begin{proof}
  The generating function for any Mahonian statistic is given as
  \[
    f(q) = [n]_q! = \prod_{i=1}^n [i]_q =  \prod_{i=1}^n (q^{i-1}+q^{i-2} +\ldots +1).
  \]
  Let $g$ be an action whose orbits all have size $d$ with $1\leq d\leq n$. We consider the primitive $d$th root of unity, $\zeta = e^{\frac{2\pi \cdot i}{d}}$.  We note that $f(\zeta^d)=f(1)=n!$.
  To prove that $g$ exhibits the CSP with respect to this polynomial, we wish to show $f(\zeta^k)=0$ for all $1\leq k< d$.

  By definition, $(\zeta^k)^{d-1}+(\zeta^k)^{d-2} +\cdots + (\zeta^k)^{1} +1=0$ for all $k$ values. Thus, we have
  \begin{align*}
    f(\zeta^k) & =\left (\prod_{i=1}^{d-1}[i]_{\zeta^k} \right) \cdot [d]_{\zeta^k} \cdot \left (\prod_{i=d+1}^{n}[i]_{\zeta^k} \right ) = \left (\prod_{i=1}^{d-1}[i]_{\zeta^k} \right) \cdot 0 \cdot \left( \prod_{i=d+1}^{n}[i]_{\zeta^k} \right)  = 0
  \end{align*}
  as desired.
\end{proof}


\begin{example}
The inversion number statistic under rotation (Map 179) is one of the original instances of the CSP proved in \cite[Theorem 1.1]{ReStWh2004}. Rotation of permutations of $[n]$ has all orbits of size $n$. 
\end{example}

Defant defines {toric promotion} as a bijection on the labelings of a graph. In \cite{ToricPromotion}, he shows that the order of toric promotion has a simple expression if the graph is a forest. Toric promotion can be applied to the permutation $\sigma$ by considering the path labeled from left to right by $\sigma_1\ldots \sigma_n$ as graph, then applying toric promotion to that graph.
Applying toric promotion to that path graph is equivalent to the following definition directly on permutations:
\begin{definition}
\label{def:toric}
Let $\sigma$ be a permutation of $[n]$. Define $\tau_{i,j}(\sigma)= (i,j) \circ \sigma$ if $|\sigma^{-1}(i)-\sigma^{-1}(j)| >1$, and $\tau_{i,j}(\sigma) = \sigma$ otherwise. \textbf{Toric promotion} (Map 310) is equivalent to multiplying the permutation $\sigma$ by the product $\tau_{n,1}\tau_{n-1,n}\ldots\tau_{1,2}$.
\end{definition}

Using \cite[Theorem 1.3]{ToricPromotion}, toric promotion on any tree with $n$ vertices has order $n-1$, and all orbits of toric promotion have size $n-1$. Consequently, toric promotion divides the set of permutations of $[n]$ into orbits all of size $n-1$. So we have the following as a corollary of Theorem \ref{Mahonian}. 

\begin{cor}\label{Mahonian_cor}
   Mahonian statistics exhibit the cyclic sieving phenomenon under the reverse, complement, toric promotion, and rotation maps.
\end{cor}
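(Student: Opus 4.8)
The plan is to reduce everything to Theorem~\ref{Mahonian}: since any Mahonian statistic has the generating function $[n]_q!$, it suffices to check that each of the four maps partitions $S_n$ into orbits that all share a single common size $d$ with $1 \leq d \leq n$, whereupon the cyclic sieving phenomenon follows immediately. Thus the entire proof is a verification of orbit structure for the reverse, complement, rotation, and toric promotion maps.

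First I would dispatch the reverse and complement maps together. Both are involutions, so every orbit has size $1$ or $2$, and it remains only to rule out fixed points for $n \geq 2$. If $\R(\sigma) = \sigma$ then $\sigma_1 = \sigma_n$, contradicting injectivity since $1 \neq n$; if $\C(\sigma) = \sigma$ then $\sigma_i = n + 1 - \sigma_i$ for every $i$, forcing $\sigma_i = \frac{n+1}{2}$ for all $i$, which is again impossible. Hence for $n \geq 2$ both maps are fixed-point-free involutions, so every orbit has size $d = 2$, and indeed $2 \leq n$. Next I would treat rotation and toric promotion. Rotation cyclically shifts the one-line word $\sigma_1 \cdots \sigma_n$, so $\rot^n$ is the identity and every orbit size divides $n$; an orbit of size $e < n$ would force $\sigma_1 \cdots \sigma_n$ to be periodic with period $e$, but the distinctness of the entries of a permutation precludes any period smaller than $n$, so every orbit has size exactly $d = n$. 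For toric promotion, the required fact was already recorded just above the statement: by \cite[Theorem 1.3]{ToricPromotion} toric promotion on the path graph on $n$ vertices has order $n - 1$ with all orbits of size $n - 1$, so here $d = n - 1$ and $1 \leq n - 1 \leq n$. In all four cases the hypotheses of Theorem~\ref{Mahonian} are satisfied, which completes the proof.

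There is essentially no deep obstacle here, as this is a bookkeeping corollary of Theorem~\ref{Mahonian}. The only points requiring a moment's care are the periodicity argument for rotation (that a permutation word cannot be periodic with period properly dividing $n$) and the reminder that the uniform-orbit-size hypothesis of Theorem~\ref{Mahonian} is genuinely needed, which is precisely why one must verify that the reverse and complement are \emph{fixed-point-free} rather than merely that they are involutions.
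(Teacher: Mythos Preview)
Your proposal is correct and follows essentially the same approach as the paper: the corollary is derived directly from Theorem~\ref{Mahonian} by observing that reverse and complement are fixed-point-free involutions (all orbits of size $2$), rotation has all orbits of size $n$, and toric promotion has all orbits of size $n-1$ via \cite[Theorem 1.3]{ToricPromotion}. The paper simply asserts these orbit structures (treating the first three as known and citing Defant for the last), whereas you spell out the easy verifications; the content is the same.
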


Mahonian statistics  also appeared in Subsection~\ref{sec:conj}, where we discussed prior work on cyclic sieving of linear combinations of these statistics with respect to conjugation by the long cycle.

\subsection{Rank}
In this subsection, we prove CSPs involving the rank statistic and several maps: reverse, complement, rotation, and Lehmer code rotation.
\begin{definition}
  The \textbf{rank} (Statistic 20) of a permutation of $[n]$ is its position among the $n!$ permutations, ordered lexicographically. This is an integer between $1$ and $n!$.
\end{definition}

\begin{prop}\label{thm:rank}
  The rank (Statistic $20$) of a permutation exhibits the cyclic sieving phenomenon under any bijective action whose orbits have the same size.
\end{prop}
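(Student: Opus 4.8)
The plan is to compute the rank generating function explicitly and then run the same root-of-unity argument used for Theorem~\ref{Mahonian}. Since the rank is by definition a bijection from $S_n$ onto $\{1, 2, \ldots, n!\}$, each value in that range is attained exactly once, so the statistic generating function is
\[ f(q) = \sum_{\sigma \in S_n} q^{\rank(\sigma)} = \sum_{k=1}^{n!} q^k = q\,\frac{q^{n!}-1}{q-1}. \]
I would record that $f(1) = n!$, consistent with the general fact noted after Definition~\ref{def:stat_gen_fun}.

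Next I would set up the orbit data. If $g$ is a bijective action on $S_n$ all of whose orbits have the same size $d$, then these orbits partition the $n!$ permutations into blocks of size $d$, so $d \mid n!$, and the order of $g$ is exactly $d$. Put $\zeta = e^{2\pi i/d}$. For an integer $j$, a permutation is fixed by $g^j$ precisely when $d \mid j$ (each orbit being a single $d$-cycle of $g$), so the number of fixed points of $g^j$ equals $n!$ if $d \mid j$ and $0$ otherwise.

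It then remains to match $f(\zeta^j)$ against these counts. When $d \mid j$ we have $\zeta^j = 1$ and $f(\zeta^j) = f(1) = n!$. When $d \nmid j$ we have $\zeta^j \neq 1$, so the denominator $\zeta^j - 1$ is nonzero; meanwhile $(\zeta^j)^{n!} = (\zeta^{n!})^j = 1$ because $d \mid n!$ forces $\zeta^{n!} = 1$, so the numerator $(\zeta^j)^{n!} - 1$ vanishes and $f(\zeta^j) = 0$. This matches the fixed-point counts in both cases, which establishes the CSP.

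The computation is routine; the only point requiring care is the structural observation that a bijection with all orbits of size $d$ must satisfy $d \mid n!$ and have order $d$, which is exactly what makes $\zeta^{n!} = 1$ and thereby kills $f(\zeta^j)$ away from the multiples of $d$. So the main (mild) obstacle is simply verifying the divisibility $d \mid n!$ rather than any delicate estimate; this parallels the role played by the bound $d \le n$ in the proof of Theorem~\ref{Mahonian}.
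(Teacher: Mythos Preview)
Your proof is correct and follows essentially the same approach as the paper: compute the rank generating function as the geometric series $q\frac{1-q^{n!}}{1-q}$, observe that equal orbit size $d$ forces $d\mid n!$, and then verify that the polynomial vanishes at the nontrivial $d$th roots of unity. Your write-up is slightly more explicit about the fixed-point count in each case, but the argument is the same.
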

\begin{proof} If all orbits have the same size $d$, $d$ must divide $n!$.
  Every permutation is given a unique rank from 1 to $n!$. So the generating function of the rank statistic is
$
    f(q)= \displaystyle\sum_{j=1}^{n!}q^j
=q\frac{1-q^{n!}}{1-q}$ which vanishes for $q = e^{2\pi ik/d}$ whenever $1\leq k < d$ and $d \ | \ n!$.
\end{proof}

\begin{definition}
  \label{def:lehmercode}
  The \textbf{Lehmer code} of a permutation $\sigma\in S_n$ is
  $L(\sigma )=(L(\sigma )_{1},\ldots, L(\sigma )_{n})\quad {\text{where}}\quad L(\sigma )_{i}=\#\{j>i\mid \sigma _{j}<\sigma _{i}\}$.
  The \textbf{Lehmer code rotation} (Map 149) is a map that sends $\sigma$ to the unique permutation $\tau\in S_n$ such that every entry in the Lehmer code of $\tau$ is cyclically (modulo $n+1-i$) one larger than the Lehmer code of $\sigma$.
\end{definition} 

The Lehmer code rotation has orbits all of size $\lcm(1,2,\ldots,n)$ \cite[Theorem 4.8]{ELMSW22}. Thus, the corollary below follows from Proposition \ref{thm:rank}.
 
\begin{cor}\label{rank_cor}
The rank (Statistic $20$)  exhibits the cyclic sieving phenomenon under the actions of the reverse, complement, toric promotion, rotation, and Lehmer code rotation maps.
\end{cor}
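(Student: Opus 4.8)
The plan is to invoke Proposition~\ref{thm:rank} directly: since the rank statistic exhibits the CSP under \emph{any} bijective action whose orbits all share a common size, it suffices to verify that each of the five named maps partitions $S_n$ into orbits of a single common size $d$, which then automatically divides $n! = |S_n|$. Each of these orbit-size facts is either already recorded in the excerpt or follows from a short argument, so the corollary reduces to an assembly step.

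First I would dispatch the reverse and complement maps. Both are involutions, so each orbit has size $1$ or $2$, and the only content is that they are fixed-point free for $n\geq 2$. For the reverse, a fixed point would satisfy $\sigma_i = \sigma_{n+1-i}$ for all $i$; since $\sigma$ is injective this forces $i = n+1-i$ for every $i$, which is impossible once $n\geq 2$. The identical argument applies to the complement through $\C(\sigma)_i = n+1-\sigma_i$. Hence both maps have all orbits of size $2$, matching the fixed-point-free involutions of Section~\ref{sec:rev_comp}, and Proposition~\ref{thm:rank} applies with $d=2$.

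Next I would simply collect the uniform orbit sizes of the remaining three maps, all stated earlier in the paper: rotation has every orbit of size $n$ (as noted following Theorem~\ref{Mahonian}); toric promotion has every orbit of size $n-1$ (via \cite[Theorem 1.3]{ToricPromotion}, since the underlying path graph is a tree); and Lehmer code rotation has every orbit of size $\lcm(1,2,\ldots,n)$ (by \cite[Theorem 4.8]{ELMSW22}). In each case the common orbit size divides $n!$, so the hypothesis of Proposition~\ref{thm:rank} is satisfied and the desired CSP follows.

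There is no substantive obstacle here, as the corollary is purely a packaging of Proposition~\ref{thm:rank} with previously-established orbit structures. The only point requiring a moment's care is confirming that reverse and complement are genuinely fixed-point free (so that their orbit sizes are uniformly $2$ rather than a mix of $1$ and $2$), which the short injectivity argument above settles.
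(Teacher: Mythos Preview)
Your proposal is correct and takes essentially the same approach as the paper: invoke Proposition~\ref{thm:rank} and cite the uniform orbit sizes already established (reverse and complement as fixed-point-free involutions, rotation with orbits of size $n$, toric promotion with orbits of size $n-1$, and Lehmer code rotation with orbits of size $\lcm(1,\dots,n)$). The paper simply states the corollary as an immediate consequence of Proposition~\ref{thm:rank} together with these orbit facts, and your write-up does the same with slightly more detail on the fixed-point-free check.
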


\begin{remark}
  It is interesting that this is the only statistic from FindStat for which the Lehmer code rotation exhibits the CSP, while we showed in \cite{ELMSW22} that this map exhibits many homomesies with statistics in FindStat.
  See Remark \ref{rem:list_of_overlap} for further discussion.
\end{remark}

\subsection{Specific entries and rotation}
In the prior two subsections, we showed the rotation map exhibits the CSP for Mahonian statistics and the rank of a permutation. Here we prove an additional CSP for the rotation map with any specific entry statistic. This includes the following FindStat statistics:
the first entry of a permutation (Statistic 54), as well as its last (Statistic 740), the upper middle (Statistic 1806) and lower middle (Statistic 1807) entries.

\begin{prop}\label{prop:ith_entry}
  The $i$-th entry of a permutation exhibits the cyclic sieving phenomenon under any action whose orbits all have size $d$ where $d \ | \ n$, in particular, the rotation map. 
\end{prop}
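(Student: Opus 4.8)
The plan is to mirror the approach of Theorem~\ref{Mahonian} and Proposition~\ref{thm:rank}: write down the statistic generating function explicitly, evaluate it at the relevant roots of unity, and compare these evaluations against fixed-point counts determined by the orbit structure.

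First I would compute the generating function. Fix a position $i \in [n]$ and let the statistic be $\Stat(\sigma) = \sigma_i$. For each value $v \in [n]$, the number of permutations $\sigma \in S_n$ with $\sigma_i = v$ is exactly $(n-1)!$, since the remaining $n-1$ entries can be any arrangement of $[n]\setminus\{v\}$. Hence
\[ f(q) = \sum_{\sigma \in S_n} q^{\sigma_i} = (n-1)! \sum_{v=1}^{n} q^v = (n-1)!\, q\, [n]_q, \]
recovering that $f(1) = (n-1)!\cdot n = n!$ as required by the general identity $f(1)=n!$.

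Next I would evaluate $f$ at powers of $\zeta = e^{2\pi i/d}$. Let $g$ be an action all of whose orbits have size $d$, with $d \mid n$; then $g$ has order $d$ and $g^d = \mathrm{id}$. At $q = \zeta^d = 1$ we obtain $f(1) = n!$, which counts the permutations fixed by $g^d = \mathrm{id}$. For $1 \le k < d$ we have $\zeta^k \neq 1$, so $[n]_{\zeta^k} = \frac{\zeta^{kn}-1}{\zeta^k - 1}$; since $d \mid n$, we get $\zeta^{kn} = (\zeta^d)^{kn/d} = 1$, whence $[n]_{\zeta^k} = 0$ and therefore $f(\zeta^k) = 0$.

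Finally I would match this against the orbit structure. Because each orbit of $g$ is a single $d$-cycle, a permutation is fixed by $g^k$ if and only if $d \mid k$; for $1 \le k < d$ this never holds, so $g^k$ has no fixed points, in agreement with $f(\zeta^k) = 0$. Combined with the $k=d$ case, this establishes the CSP. For the rotation map, all orbits have size $n$, so $d = n \mid n$ and the result applies directly. There is no genuine obstacle here: the only substantive observation is that $f$ is a scalar multiple of $q\,[n]_q$, after which the vanishing at the nontrivial $d$th roots of unity follows exactly as in Theorem~\ref{Mahonian}, the divisibility $d \mid n$ being precisely what forces $\zeta^{kn} = 1$.
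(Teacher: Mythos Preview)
Your proof is correct and follows essentially the same approach as the paper: both compute the generating function as $(n-1)!\,q\,[n]_q = (n-1)!\,q\,\frac{1-q^n}{1-q}$ and observe that it vanishes at nontrivial $d$th roots of unity precisely because $d \mid n$. Your version is slightly more explicit in verifying the fixed-point counts, but the argument is the same.
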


\begin{proof}
  The generating function for the $i$-th entry of a permutation is $f(q) = \displaystyle\sum_{j = 1}^n (n-1)! q^j=(n-1)!q\frac{1-q^n}{1-q}$. Note $\frac{1-q^n}{1-q}$ vanishes for $q=e^{2\pi i k/d}$ whenever $1\leq k < d$ and $d \ | \ n$.
%
\end{proof}

\subsection{Inversions of a specific entry and toric promotion}
In the prior two subsections, we showed the toric promotion map exhibits the CSP for Mahonian statistics and the rank of a permutation. Here we prove two additionals CSP for the toric promotion map.

The number of inversions of the $i$-th entry of a permutation $\sigma$ is the number of inversions $\{ i < j \mid \sigma(j) < \sigma(i)\}$, where $i$ is fixed. In FindStat, we find the following statistics: the number of inversions of the second entry (Statistic 1557) and of the third entry (Statistic 1556), as well as the first entry of a permutation (Statistic 54). Note that the value of the first entry of a permutation is one more than the number of inversions of the first entry.

\begin{thm}\label{thm:ith_entry}
  The number of inversions of the $i$-th entry exhibits the cyclic sieving phenomenon under any map whose orbits all have size $d$ where $d \ | \ (n+1-i)$.
\end{thm}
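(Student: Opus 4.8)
The plan is to recognize that the number of inversions of the $i$-th entry is precisely the $i$-th coordinate $L(\sigma)_i$ of the Lehmer code from Definition~\ref{def:lehmercode}, since both count $\#\{j>i\mid \sigma_j<\sigma_i\}$. First I would exploit the fact that the Lehmer code is a bijection from $S_n$ onto the product $\prod_{j=1}^{n}\{0,1,\ldots,n-j\}$ whose coordinates are independent. Fixing the $i$-th coordinate to any value $k\in\{0,1,\ldots,n-i\}$ and letting the remaining coordinates range freely shows that exactly $\tfrac{n!}{n+1-i}$ permutations satisfy $L(\sigma)_i=k$, independently of $k$. Hence the statistic generating function is
\[
  f(q)=\sum_{\sigma\in S_n}q^{L(\sigma)_i}
  =\frac{n!}{n+1-i}\sum_{k=0}^{n-i}q^{k}
  =\frac{n!}{n+1-i}\cdot\frac{1-q^{\,n+1-i}}{1-q}.
\]

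With this closed form in hand, the argument parallels that of Proposition~\ref{prop:ith_entry} and Theorem~\ref{Mahonian}. Let $g$ be any map on $S_n$ whose orbits all have size $d$ with $d\mid(n+1-i)$, and set $\zeta=e^{2\pi i/d}$. Since $g^{d}$ is the identity while $g^{k}$ fixes no permutation for $1\le k<d$, it suffices to verify that $f(\zeta^{d})=f(1)=n!$ and that $f(\zeta^{k})=0$ for $1\le k<d$. The first equality is immediate. For the second, write $n+1-i=dm$ for some positive integer $m$; then $(\zeta^{k})^{\,n+1-i}=(\zeta^{d})^{km}=1$, so the numerator $1-(\zeta^{k})^{\,n+1-i}$ vanishes, while the denominator $1-\zeta^{k}\neq 0$ because $\zeta^{k}\neq 1$ whenever $1\le k<d$. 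Thus $f(\zeta^{k})=0$, which matches the fixed-point count of $g^{k}$ and establishes the CSP.

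The only genuinely nontrivial step is the equidistribution claim that each value of $L(\sigma)_i$ is attained by the same number of permutations; the rest is a routine root-of-unity evaluation of exactly the same flavor as the preceding results. That claim, however, follows directly from the factorization of the Lehmer code into independent coordinates, so I expect no real obstacle. It is worth emphasizing that the divisibility hypothesis $d\mid(n+1-i)$ is precisely what forces $(\zeta^{k})^{\,n+1-i}=1$, and hence is exactly the condition needed for the numerator to vanish; this makes clear why the statistic is tied to divisors of $n+1-i$ rather than of $n$ as in Proposition~\ref{prop:ith_entry}.
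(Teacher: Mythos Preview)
Your proposal is correct and follows essentially the same approach as the paper: both identify the statistic with the $i$-th Lehmer code coordinate, use the product structure of Lehmer codes to conclude each value $k\in\{0,\ldots,n-i\}$ occurs $\tfrac{n!}{n+1-i}$ times, and then evaluate the resulting closed form $\tfrac{n!}{n+1-i}\cdot\tfrac{1-q^{n+1-i}}{1-q}$ at the relevant roots of unity. Your write-up is in fact a bit more explicit than the paper's in verifying $f(1)=n!$ and in factoring $n+1-i=dm$ to see the numerator vanish.
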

\begin{proof}
    We use the fact that permutations of $[n]$ are in bijection with Lehmer codes, which are sequences of $n$ numbers, such that the $i$-th takes a value between $0$ and $n-i$. There are $n!$ possible Lehmer codes, and the Lehmer code $(a_1, \ldots, a_n)$ corresponds to the unique permutation that has $a_i$ inversions of the $i$-th entry for each $1 \leq i \leq n$. For more details on this bijection with Lehmer codes, see for example \cite[p.12]{Knuth_AOCP3}.
    Since the number of Lehmer codes with $i$-th entry equal to $k$ is $\frac{n!}{n-i+1}$ for any value of $k$ between $0$ and $n-i$, there are exactly $\frac{n!}{n-i+1}$ permtutations with $k$ inversions of the $i$-th entry, and the statistic generating function is
    \[ f(q) = \frac{n!}{n-i+1}\sum_{j = 0}^{n-i} q^j=\frac{n!}{n-i+1}\frac{1-q^{n+1-i}}{1-q} \]
    which vanishes for $q=e^{2\pi i k/(n+1-i)}$ when $1\leq k < d$ and $d \ | \ (n+1-i)$. 
\end{proof}

Note that the above result allows us to recover the result of Proposition \ref{thm:ith_entry} for the first entry, a CSP for the reverse and complement maps with the number of inversions of the $(n-1)$st entry, and the following corollary about toric promotion (see Definition~\ref{def:toric}).
\begin{cor}\label{cor:toric_pro_2nd_entry}
The number of inversions of the second entry (Statistic $1557$) exhibits the  cyclic sieving phenomenon under the toric promotion map.
\end{cor}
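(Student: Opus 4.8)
The plan is to deduce this corollary directly from Theorem~\ref{thm:ith_entry}, specializing to $i = 2$ and matching the divisibility hypothesis against the already-established orbit structure of toric promotion. First I would recall the two ingredients provided earlier in the excerpt: (1) Theorem~\ref{thm:ith_entry} asserts that the number of inversions of the $i$-th entry exhibits the CSP under any map all of whose orbits share a common size $d$ with $d \mid (n+1-i)$; and (2) by the discussion following Theorem~\ref{Mahonian}, which invokes \cite[Theorem~1.3]{ToricPromotion}, toric promotion (Map $310$) partitions $S_n$ into orbits all of size exactly $n-1$.

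Setting $i = 2$ in Theorem~\ref{thm:ith_entry}, the relevant divisibility condition becomes $d \mid (n+1-2) = n-1$. Taking $d = n-1$, which is precisely the common orbit size of toric promotion, the condition $n-1 \mid n-1$ holds trivially. Hence toric promotion falls within the scope of Theorem~\ref{thm:ith_entry} in the case $i = 2$, and therefore the number of inversions of the second entry (Statistic $1557$) exhibits the cyclic sieving phenomenon under the toric promotion map.

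Since every step is a direct substitution into an already-proved theorem, there is no substantive obstacle to overcome; the only point requiring care is confirming that the orbit size $n-1$ of toric promotion genuinely satisfies the divisibility hypothesis $d \mid (n+1-i)$ with $i = 2$, which it does because $n+1-2 = n-1$ coincides exactly with the orbit size. In particular, no new generating-function evaluation at roots of unity or new bijection is needed beyond what Theorem~\ref{thm:ith_entry} already supplies, so I expect the write-up to be a short one-paragraph specialization rather than an independent argument.
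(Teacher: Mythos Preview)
Your proposal is correct and matches the paper's approach exactly: the corollary is stated immediately after Theorem~\ref{thm:ith_entry} as a direct specialization with $i=2$, using the fact (recorded after Definition~\ref{def:toric}) that toric promotion has all orbits of size $n-1$, which divides $n+1-2=n-1$. No additional argument is given or needed in the paper.
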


There is one more statistic that FindStat suggests as exhibiting the CSP for toric promotion. 
\begin{definition}
For $\sigma\in S_n$, define the following \textbf{descent variant} $\partial(\sigma)=\sum_{i\in \Des(\sigma)}{i(n-i)}$.
\end{definition}

\begin{prop}\label{thm:1911}
The descent variant $\partial$ minus the number of inversions (Statistic 1911) exhibits the cyclic sieving phenomenon under the toric promotion map.
\end{prop}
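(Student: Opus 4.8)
The plan is to exploit that toric promotion has all orbits of the same size $n-1$ (Definition~\ref{def:toric} together with \cite[Theorem 1.3]{ToricPromotion}), so that this fits the framework of Section~\ref{sec:basic}. Write $g$ for toric promotion, $\zeta = e^{2\pi i/(n-1)}$, and $f(q) = \sum_{\sigma \in S_n} q^{\partial(\sigma) - \inv(\sigma)}$. Since every orbit has size exactly $n-1$, the map $g^k$ has no fixed points when $1 \le k < n-1$, while $g^k$ is the identity when $(n-1)\mid k$. Hence proving the CSP reduces to checking $f(1) = n!$ (immediate) and $f(\zeta^k) = 0$ for $1 \le k < n-1$. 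To get the latter I would group $S_n$ into $g$-orbits and show that each orbit contributes $0$ to $f(\zeta^k)$.

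The key step is a shift lemma: there is a constant $c_n$, coprime to $n-1$, with $(\partial - \inv)(g(\sigma)) \equiv (\partial - \inv)(\sigma) + c_n \pmod{n-1}$ for every $\sigma \in S_n$ (small cases suggest $c_n = 1$; for instance when $n = 3$ one checks that $g$ coincides with the reverse map and the statistic changes parity across each orbit). Granting this, for an orbit $O = \{\sigma, g(\sigma), \dots, g^{n-2}(\sigma)\}$ the contribution is the geometric sum
\[ \sum_{\tau \in O} \zeta^{k(\partial - \inv)(\tau)} = \zeta^{k(\partial-\inv)(\sigma)} \sum_{j=0}^{n-2} \zeta^{k c_n j} = \zeta^{k(\partial-\inv)(\sigma)}\,\frac{\zeta^{k c_n (n-1)} - 1}{\zeta^{k c_n} - 1} = 0, \]
because $\zeta^{k c_n} \neq 1$ for $1 \le k < n-1$ when $\gcd(c_n, n-1) = 1$. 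Summing over all orbits yields $f(\zeta^k) = 0$, which finishes the CSP.

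I expect the shift lemma to be the main obstacle. Toric promotion is the ordered composition $\tau_{n,1}\tau_{n-1,n}\cdots\tau_{1,2}$, where $\tau_{i,i+1}$ swaps the values $i$ and $i+1$ in one-line notation precisely when they occupy non-adjacent positions. I would track how $\partial - \inv$ changes through this composition. Each performed swap of the consecutive values $i, i+1$ changes $\inv$ by exactly $\pm 1$ (only the relative order of $i$ and $i+1$ is affected, since every third value compares the same way to both), while it changes $\partial = \sum_{i \in \Des} i(n-i)$ by an amount governed by the descents created or destroyed at the at most four positions bordering $i$ and $i+1$. Reducing the weights through $i(n-i) \equiv i - i^2 \pmod{n-1}$, the hard part is to show that, once the skip-when-adjacent rule is accounted for, all of these position- and value-dependent contributions cancel modulo $n-1$ across one full cycle of toggles, leaving only the single $\sigma$-independent constant $c_n$. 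I would organize this bookkeeping by classifying the local configuration at each toggle and exploiting the telescoping of descent boundaries as the swapped value sweeps from $1$ to $n$; the reduction in the first two paragraphs is routine by comparison.
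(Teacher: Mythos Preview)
Your shift lemma is false, and with it the entire approach collapses. Take $n=4$, so that toric promotion has orbits of size $3$. One orbit is $\{1234,\,4231,\,3142\}$ (you can verify $g(1234)=4231$, $g(4231)=3142$, $g(3142)=1234$ directly from Definition~\ref{def:toric}). The statistic $\partial-\inv$ takes the values $0$, $1$, $3$ on these three permutations, i.e.\ residues $0,1,0$ modulo $3$. The successive shifts are $+1,+2,0\pmod 3$, not a single constant, and the orbit does \emph{not} hit every residue class; its contribution to $f(\zeta)$ for $\zeta=e^{2\pi i/3}$ is $1+\zeta+1=2+\zeta\neq 0$. So the orbit-by-orbit cancellation you rely on simply does not occur. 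The check you did at $n=3$ is misleading because with only two-element orbits any change of parity looks like a constant shift.

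The paper bypasses all of this by quoting the closed product formula
\[
\sum_{\sigma\in S_n} q^{\partial(\sigma)-\inv(\sigma)} \;=\; n\prod_{i=1}^{n-1}\frac{1-q^{i(n-1)}}{1-q^i}
\]
from \cite[Remark~1.5]{StembridgeWaugh}. The $i=1$ factor is $[n-1]_q$, which vanishes at every nontrivial $(n-1)$-th root of unity, and the proof is done in one line. The moral is that the CSP here is a global cancellation across \emph{all} of $S_n$, not something visible on each toric-promotion orbit separately; any successful argument has to use the generating function (or something equivalent to it) rather than tracking the statistic along orbits.
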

\begin{proof}
By \cite[Remark 1.5]{StembridgeWaugh}, the generating function for the descent variant statistic is 
\[\sum_{\sigma\in S_n}q^{\partial(\sigma)-\inv(\sigma)}=n\prod_{i=1}^{n-1}\frac{1-q^{i(n-1)}}{1-q^i}.
\]
The factor of this product when $i=1$ is $\displaystyle\frac{1-q^{n-1}}{1-q}$, which equals $0$ whenever $q=e^{2\pi ik/(n-1)}$ for $1\leq k < n-1$. 
\end{proof}

\bibliographystyle{plain}
\bibliography{master.bib}

\appendix
\section{List of Mahonian statistics}\label{appendix:mahonian}
\begin{prop}
  The following FindStat statistics on permutations are Mahonian:
  \begin{itemize}
    \item Statistic $4$: The major index
    \item Statistic $18$: The number of inversions
    \item Statistic $156$: The Denert index
    \item Statistic $224$: The sorting index
    \item Statistic $246$: The number of non-inversions
    \item Statistic $304$: The load
    \item Statistic $305$: The inverse major index
    \item Statistic $334$: The maz index, the major index after replacing fixed points by zeros
    \item Statistic $339$: The maf index
    \item Statistic $446$: The disorder
    \item Statistic $692$: Babson and Steingr\'imsson's statistic stat
    \item Statistic $794$: The mak
    \item Statistic $795$: The mad
    \item Statistic $796$: The stat$'$
    \item Statistic $797$: The stat$''$
    \item Statistic $798$: The makl
    \item Statistic $833$: The comajor index
    \item Statistic $868$: The aid statistic in the sense of Shareshian-Wachs
    \item Statistic $1671$: Haglund's hag
  \end{itemize}
\end{prop}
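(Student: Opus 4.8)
The plan is to verify that each of the nineteen listed statistics shares the distribution of the number of inversions, since by definition a statistic is Mahonian exactly when its generating function equals $[n]_q!$, equivalently when it is equidistributed with $\inv$. I would organize the statistics into tiers according to the amount of work each requires, in every case reducing to MacMahon's classical theorem that the number of inversions (Statistic $18$) and the major index (Statistic $4$) are Mahonian. These two anchor everything that follows.

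The first tier consists of statistics obtained from $\inv$ or $\maj$ by an elementary distribution-preserving bijection. The number of non-inversions (Statistic $246$) equals $\binom{n}{2}-\inv(\sigma)$, and the reverse map $\R$ carries inversions to non-inversions, so the two are equidistributed. The inverse major index (Statistic $305$) is $\maj(\I(\sigma))$; since $\I$ is a bijection of $S_n$, applying it to MacMahon's result shows $\imaj$ is Mahonian. The comajor index (Statistic $833$) and the load (Statistic $304$) similarly arise from $\maj$ by reversing or complementing the descent set, and for each I would record the explicit bijection and conclude equidistribution.

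The remaining statistics are the classically studied ``designer'' Mahonian statistics, each of which was shown to be Mahonian in the work introducing it; for these I would cite rather than reprove. These include the Denert index (Statistic $156$, Foata--Zeilberger), the sorting index (Statistic $224$, Petersen), the disorder (Statistic $446$), the maz and maf indices (Statistics $334$ and $339$, via the fix-Mahonian calculus of Foata--Han and Clarke--Steingr\'imsson--Zeng), the family mak, mad, makl, stat, stat$'$, stat$''$ (Statistics $794$, $795$, $798$, $692$, $796$, $797$) classified by Babson and Steingr\'imsson through generalized permutation patterns, the aid statistic of Shareshian and Wachs (Statistic $868$), and Haglund's hag (Statistic $1671$). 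In each case the cited result asserts equidistribution with $\inv$, hence the generating function is $[n]_q!$.

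I expect the main obstacle to be bookkeeping rather than any single deep argument: matching each FindStat statistic to the precise definition and normalization used in its source, and checking that the elementary bijections $\R$, $\C$, and $\I$ act as claimed on the relevant descent or inversion data. A secondary subtlety is that some of these statistics (for example maz and maf) are normalized slightly differently in different references, so I would confirm that the FindStat definition matches the one proven Mahonian. Once the correct references and bijections are pinned down, every generating function collapses to $[n]_q!$ and the proposition follows.
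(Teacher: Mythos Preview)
Your proposal is correct and matches the paper's approach: both anchor on the classical MacMahon result for $\inv$ and $\maj$, handle the easy cases (non-inversions, inverse major index, load, comajor index) via elementary bijections among $\R$, $\C$, $\I$, and dispatch the remaining designer statistics by citation (Foata--Zeilberger for Denert, Petersen for the sorting index, Foata--Han for maz/maf, Babson--Steingr\'imsson for the pattern-based family including hag, Shareshian--Wachs for aid). The paper's published proof is in fact just a one-line citation list, so your outline is, if anything, more detailed than what appears.
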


\begin{proof}
  All these statistics are known in the literature to be Mahonian or can be seen to be easily equidistributed with Mahonian statistics \cite{BabsonSteingrimsson,FoataHan,FoataZeilberger,MacMahon,Petersensortingindex,Rodrigues,ShareshianWachs2007,Stanley2011}.
\end{proof}

\end{document}